\theoremstyle{thmstyleone}%
\newtheorem{theorem}{Theorem}
\newtheorem{lemma}{Lemma}
\newtheorem{assumption}{Assumption}
\theoremstyle{thmstyletwo}%
\newtheorem{example}{Example}%
\newtheorem{remark}{Remark}%
\theoremstyle{thmstylethree}%
\newtheorem{definition}{Definition}%
\begin{document}

\title[Data-driven Approximation of Distributionally Robust Chance Constraints using Bayesian Credible Intervals]{Data-driven Approximation of Distributionally Robust Chance Constraints using Bayesian Credible Intervals}

\author*[]{\fnm{Zhiping} \sur{Chen$^*$}}\email{zchen@mail.xjtu.edu.cn}

\author[]{\fnm{Wentao} \sur{Ma}}\email{mwtmwt7@stu.xjtu.edu.cn}
\author[]{\fnm{Bingbing} \sur{Ji}}\email{bingji0225@stu.xjtu.edu.cn}

\affil[]{\orgdiv{School of Mathematics and Statistics}, \orgname{Xi’an Jiaotong University},  \city{Xi’an}, \postcode{710049},  \country{China}}

\affil[]{\orgdiv{Center for Optimization Technique and Quantitative Finance}, \orgname{Xi’an International Academy for Mathematics and Mathematical Technology}, \city{Xi’an}, \postcode{710049},  \country{China}}

\abstract{The non-convexity and intractability of distributionally robust chance constraints make them challenging to cope with. From a data-driven perspective, we propose formulating it as a robust optimization problem to ensure that the distributionally robust chance constraint is satisfied with high probability.
To incorporate available data and prior distribution knowledge, we construct ambiguity sets for the distributionally robust chance constraint using Bayesian credible intervals. We establish the congruent relationship between the ambiguity set in Bayesian distributionally robust chance constraints and the uncertainty set in a specific robust optimization. In contrast to most existent uncertainty set construction methods which are only applicable for particular settings, our approach provides a unified framework for constructing uncertainty sets under different marginal distribution assumptions, thus making it more flexible and widely applicable. Additionally, under the concavity assumption, our method provides strong finite sample probability guarantees for optimal solutions.
The practicality and effectiveness of our approach are illustrated with numerical experiments on portfolio management and queuing system problems. Overall, our approach offers a promising solution to distributionally robust chance constrained problems and has potential applications in other fields.}

\keywords{chance constraint, data-driven, uncertainty set, ambiguity set, Bayesian credible interval, robust optimization}

\maketitle

\section{Introduction}\label{sec1}
	Robust optimization (RO) has become a main tool nowadays for addressing uncertainty in decision-making problems, e.g., network design \cite{ref14}, inventory management \cite{222}, and portfolio selection \cite{333}. It aims at determining an optimal decision by considering the worst-case realization among all possible realizations of the uncertainty parameters, called the uncertainty set.  More specifically, if we denote the constraint function of a decision-making problem with the uncertain parameter $\pmb{\xi}\in\mathbb{R}^d$ as  $g({\pmb{\xi}},\pmb{x})\leq0$, its RO counterpart can be formulated as
		\begin{equation}
		g(\pmb{\xi},\pmb{x})\leq0,\ \ \forall\pmb{\xi}\in\Xi. \label{1.1}
	\end{equation}
Here, $\pmb{x}\in\mathbb{R}^k$ is the decision variable, and the uncertainty set $\Xi\subseteq\mathbb{R}^d$ of ${\pmb{\xi}}$ is pre-specified by the decision-maker. For example, if  $\tilde{\pmb{\xi}}$\footnote{In this paper, we denote a generic random parameter as  $\tilde{\pmb{\xi}}$ and its possible realization as  ${\pmb{\xi}}$ for the sake of distinction.} is a random parameter, $\Xi$ can be chosen as the support set of $\tilde{\pmb{\xi}}$. The decision variable $\pmb{x}$ satisfying (\ref{1.1}) ensures that the constraint is not violated for any possible realization within the uncertainty set $\Xi$.

It is easy to see that selecting an appropriate uncertainty set is critical for the success of RO models. On the one hand, an uncertainty set that includes all possible sample realizations of the random vector $\tilde{\pmb{\xi}}$ guarantees that all uncertainties are taken into account in (\ref{1.1}). On the other hand, an extremely small appearing probability for a specific realization of $\tilde{\pmb{\xi}}$ may lead to overly conservative decisions. As highlighted in \cite{review2}, poorly chosen uncertainty set can make robust optimization models computationally intractable or overly conservative under (\ref{1.1}). Therefore, only with well-chosen uncertainty sets can the solutions of RO problems perform as well or better than other optimization approaches.

In order to cope with the above issues, an alternative framework can be adopted to reduce conservatism. This approach replaces (\ref{1.1}) with a chance constraint that allows a small probability of violation, with respect to the distribution $\mathbb{P}^c$ of $\tilde{\pmb{\xi}}$, of the inequality constraint. 
Nevertheless, the feasible set defined by chance constraints is very often non-convex, which leads to the computational difficulty for solving optimization problems with chance constraints directly. Thus, many methods, such as \cite{ref6,ref22}, utilize the idea of RO to construct an uncertainty set $\Xi_\epsilon$ such that a feasible solution $\pmb{x}^*$ to the constraint of the form (\ref{1.1}) with the uncertainty set $\Xi_\epsilon$ will also be feasible to $g(\tilde{\pmb{\xi}},\pmb{x}^*)\leq0$ with probability at least $1-\epsilon$ with respect to $\mathbb{P}^c$. In other words, we have the following relationship:
\begin{equation}
	\text{if }g({\pmb{\xi}},\pmb{x}^*)\leq0,\ \forall \pmb{\xi}\in\Xi_\epsilon,\ \text{then }\mathbb{P}^c(g(\tilde{\pmb{\xi}},\pmb{x}^*)\leq0)\geq1-\epsilon,\label{1.3}
\end{equation}
where the parameter $\epsilon$ controls the risk of violating the uncertain constraint $g(\tilde{\pmb{\xi}},\pmb{x})\leq0$. 
Unfortunately, in many applications, it is hardly possible to obtain complete knowledge about the true distribution $\mathbb{P}^c$. 
To deal with this issue in (\ref{1.3}), the distributionally robust optimization (DRO) approach has recently attracted increasing attention, which assumes that the possible distributions of $\tilde{\pmb{\xi}}$ belong to a so-called ambiguity set $\mathcal{P}$ instead of being precisely known. The combination of DRO with chance constraints gives rise to the distributionally robust chance constraint (DRCC) approach, which provides a conservative approximation to the chance constraint in (\ref{1.3}) and has been extensively studied in the literature (e.g., \cite{ref17,ref102,ref213}). With this paradigm, (\ref{1.3}) can be transformed as follows:
 \begin{equation}
	\text{if }g({\pmb{\xi}},\pmb{x}^*)\leq0,\ \forall \pmb{\xi}\in\Xi_\epsilon,\ \text{then }\inf_{\mathbb{P}\in\mathcal{P}}\mathbb{P}(g(\tilde{\pmb{\xi}},\pmb{x}^*))\leq0)\geq1-\epsilon,\label{1.4}
\end{equation}
which requires that the chance constraint must hold for all possible probability distributions belonging to the ambiguity set $\mathcal{P}$.

The construction of a reasonable ambiguity set $\mathcal{P}$ is undoubtedly very important for DRO. Several types of ambiguity sets have been proposed in the literature, including those based on moments, unimodality, or the Wasserstein distance (see e.g.,\cite{ Wass1,ref9_2,ref19}). Regardless of which type we choose, the goal is to create an ambiguity set that contains the true probability distribution with high probability and can provide some guarantees on the out-of-sample performance of the distributionally robust solution. The DRO problems in these studies are tractable and with strong asymptotic performance guarantees.  However, similar guarantees may not be available in the case of finite samples.
A good way to achieve the goal of finite sample guarantees is to sufficiently utilize the prior knowledge and the structural features of the underlying distribution. In this aspect, there are some studies that construct ambiguity sets from the perspective of hypothesis testing in statistics. 

Under the assumption that the random variables are Gaussian and mutually independent, the chi-square hypothesis test is used in \cite{ref8,ref19_2} to construct confidence intervals around the sample means and variances, and then to generate an ambiguity set. Some reformulations of (\ref{1.4}) have been derived for cases where $\tilde{\pmb{\xi}}$ follows a multivariate normal distribution without the independence assumption (e.g., \cite{ref17_2,ref44,refXuHuan}). Although such studies enable us to characterize the finite sample and asymptotic guarantees, they are limited to the family of normal distributions and may not be applicable to complex distributions. 
Meanwhile, Jiang and Guan (\cite{ref15}) studied the equivalent reformulation of DRCC problems under the confidence intervals with Kullback-Leibler (KL) divergence, but did not investigate finite sample guarantees or asymptotic convergence. The KL divergence leads to an ambiguity set of discrete probability distributions with finite support, which is not suitable for describing complex continuous distributions in the real world. The work in \cite{main} adopted different statistical hypothesis tests to convert data and chance constraints into DRO constraints.
It focuses only on the tractability and the finite-sample guarantee rather than asymptotic performance of the DRCC formulation (\ref{1.4}).  Moreover, the goodness-of-fit hypothesis test used in \cite{main} to construct uncertainty constraints may not ensure an ambiguity set of distributions which can contain the true probability distribution with high probability, particularly when dealing with complex distributions (e.g., multi-variate distributions with dependent marginal distributions) due to the lack of proper asymptotic theory.

As mentioned above, using fixed prior knowledge in statistical inference to construct ambiguity sets can improve the performance. However, incorporating adjustable prior knowledge into probability can be challenging. The existing literature, based on the classical statistical perspective of distributions, cannot overcome this difficulty. Bayesian statistics (\cite{bayes}), on the other hand, treats probability as a personal belief based on an individual's experience about the possibility of an event, known as a priori probability. By flexibly setting prior probability, which reflects one's understanding of a random phenomenon, we can obtain a good ambiguity set $\mathcal{P}$ for the possible distributions, thus leading to a well-performing uncertainty set $\Xi_\epsilon$ in (\ref{1.4}). Recently, we have encountered an interesting approach that constructs ambiguity sets for DRO from a Bayesian statistical perspective, as seen in \cite{bdro,bdrowzw, bdro2}. These studies demonstrate that the usual DRO methodology combined with Bayesian statistics can construct ambiguity sets which have good theoretical guarantees (e.g., finite sample guarantees and asymptotic convergence) and are applicable to a wide range of practical problems.

Inspired by these works, and to overcome the aforementioned shortcomings of existent studies based on statistics, we focus on a new method that combines prior knowledge with data to better model and solve DRCC problems. To the best of our knowledge, all existing studies on DRCC are based on classical statistics. Moreover,  we will provide a unified treatment of different distribution types, and establish finite sample guarantees and asymptotic convergence by relating the existing literature with Bayesian statistics. We not only consider discrete distributions and continuous distributions, but also account for different correlations between marginal distributions, which are rarely examined in existing DRCC literature.

From the Bayesian statistical perspective, we propose a novel approach to tackle the DRCC problem by incorporating prior knowledge and statistical inference. Our approach, called the Bayesian distributionally robust chance constraint (BDRCC), constructs a data-driven ambiguity set for the  underlying distribution of the random parameter. Concretely, for the random variable $\tilde{\pmb{\xi}}$ whose distribution $\mathbb{P}^c$ is not exactly known, we seek to configure an uncertainty set $\Xi_\epsilon$ that satisfies desirable properties. Unlike previous researches, we assume that the true distribution of $\tilde{\pmb{\xi}}$ belongs to a parameterized family of distributions $\mathcal{P}_\Theta:=\{\mathbb{P}_{\pmb{\theta}},\pmb{\theta}\in\Theta\}$, where $\Theta$ is the parameter space and $\pmb{\theta}^c\in\Theta$ is the unknown true parameter corresponding to $\mathbb{P}_{\pmb{\theta}^c}$. Recall that through the Bayesian perspective, $\pmb{\theta}^c$ can be viewed as a realization of a belief random variable $\tilde{\pmb{\theta}}$, whose posterior distribution can be easily computed based on the available data. To quantify what we learn about $\mathbb{P}_{\pmb{\theta}^c}$ from the data and ensure the tractability, we adopt a Bayesian credible interval \cite{bayes,BDA} to construct ambiguity sets that cover the true distribution with high confidence. Specifically, we assume that we have access to a sample set ${\mathcal{S}^{N}} = \{\pmb{\xi}^1,\cdots,\pmb{\xi}^N\}$ drawn i.i.d. according to $\mathbb{P}_{\pmb{\theta}^c}$. In our paradigm, we concentrate on the following data-driven robust counterpart of (\ref{1.4}):
\begin{equation}
	\text{if }g({\pmb{\xi}},\pmb{x}^*)\leq0,\ \forall \pmb{\xi}\in\Xi_\epsilon,\ \text{then }\inf_{\pmb{\theta}\in\Theta_{\mathcal{S}^{N}}}\mathbb{P}_{\pmb{\theta}}(g(\tilde{\pmb{\xi}},\pmb{x}^*)\leq0)\geq1-\epsilon.\label{1.5}
\end{equation}
Based on the prior knowledge about the parametric distribution and the sample set ${\mathcal{S}^{N}}$, the subset $\Theta_{\mathcal{S}^{N}}$ of $\Theta$ can capture various features of $\mathbb{P}_{\pmb{\theta}^c}$. Through (\ref{1.5}), the BDRCC can be reformulated as a RO constraint with uncertainty sets that imply similar probabilistic guarantees, but are much smaller than their traditional counterparts. As a result, RO models constructed using these new uncertainty sets can generate less conservative solutions than traditional ones, while still maintaining their robustness.

The significance of our approach lies in its ability to convert the BDRCC problem into a RO problem with a convex uncertainty set. Through the parametric distribution assumption, we generalize the uncertainty set construction method for DRCC problems under restrictive distribution assumptions (e.g., normal distribution).  This allows us to incorporate both Bayesian prior and posterior knowledge of the distribution into the construction of an uncertainty set, as well as to address the challenge of constructing ambiguity sets in the traditional distributionally robust framework. Our approach to DRCC problem employs a Bayesian posterior distribution to construct the ambiguity set of distributions, providing both finite sample guarantees and asymptotic convergence. By utilizing the asymptotic property of the Bayesian posterior distribution, our method  results in an improved performance compared to the classic DRCC approach. By deriving the credible intervals of the parameters corresponding to the true distribution and constructing an ambiguity set for the distribution parameters, our study establishes a direct relationship between the ambiguity set of parameters $\pmb{\theta}$ in BDRCC and the uncertainty set for the random vector $\tilde{\pmb{\xi}}$ in RO.

In this paper, we use standard notation to denote vectors, scalars, and sets, where lowercase bold letters $(\pmb{x},\pmb{\xi},...)$ represent vectors and ordinary lowercase letters $(x,\xi)$ represent scalars. Calligraphic type $(\mathcal{P}, {\mathcal{S}^{N}},...)$ denotes sets. We denote the $i^{th}$ coordinate vector as $\pmb{e}_i$ and the vector of all ones as $\pmb{e}$. A random vector is denoted by $\tilde{\pmb{\xi}}$, with its components denoted by $\tilde{\xi}_i$. We use ${\mathbb{P}_{\pmb{\theta}}}$ to denote the parametric family of distributions, and $\mathbb{P}_{\pmb{\theta}^c}$ to denote the true distribution of $\tilde{\pmb{\xi}}$. The marginal distribution of $\tilde{\xi}_i$ is denoted by $\mathbb{P}_{\pmb{\theta}_i}$. Let ${\mathcal{S}^{N}} = \{\pmb{\xi}^1,\cdots,\pmb{\xi}^N\}$ represent a sample of $N$ data points drawn i.i.d. according to $\mathbb{P}_{\pmb{\theta}^c}$, and let $\mathbb{P}_{{\mathcal{S}^{N}}}$  denote the posterior distribution based on ${\mathcal{S}^{N}}$. Finally, we denote the posterior mode of $\mathbb{P}_{{\mathcal{S}^{N}}}$ with respect to (w.r.t.) ${\mathcal{S}^{N}}$ as $\hat{\pmb{\theta}}$.

The remainder of this paper is structured as follows. In the next section, we review some relevant results that will be used throughout the subsequent discussion. Section 3 proposes an approximation to the BDRCC using the RO approach. In Sections 4 and 5, we construct various data-driven uncertainty sets that can be used under different situations. Finally, we present the numerical results of two applications in Section 6, and we conclude the paper in Section 7.

\section{Prerequisites}
In this section, we will present some preparatory knowledge that will be valuable in obtaining a tractable reformulation of the BDRCC in (\ref{1.5}). Our analysis is based on two standard assumptions:
\begin{assumption}\label{ass2.1}
The true distribution of the random parameter $\tilde{\pmb{\xi}}$ belongs to a family of distributions $\{\mathbb{P}_{\pmb{\theta}}, \pmb{\theta}\in\Theta\subseteq\mathbb{R}^n\}$ which is specified by a finite-dimensional parameter vector $\pmb{\theta}\in\mathbb{R}^n$. The true distribution corresponds to an underlying parameter ${\pmb{\theta}^c}$, i.e., $\mathbb{P}^c=\mathbb{P}_{\pmb{\theta}^c}$  for some $\pmb{\theta}^c\in\Theta\subseteq\mathbb{R}^n$.
\end{assumption}

\begin{assumption}\label{ass2.2}
     The constraint function $ g (\cdot,\pmb{x })$ is concave and finite-valued in $\pmb{\xi}$ for all $\pmb{x}$. 
\end{assumption}

As a Bayesian perspective assumption, Assumption \ref{ass2.1} is general enough to encompass typical cases. For example, it includes general parametric distributions such as the Poisson and normal distributions. It also covers finite discrete distributions and finite mixture distributions with known components.

Assumption \ref{ass2.2} is quite mild compared to those in the existing literature. Most typical DRCC problems are studied under linear constraints, which automatically satisfy Assumption \ref{ass2.2}.
\subsection{Tractability of Distributionally Robust Chance Constraints}
For presentation convenience, we first recall the definitions of the support function and the Value at Risk (VaR). For any $\pmb{v}\in\mathbb{R}^d$, $0<\epsilon<1$ and probability distribution $\mathbb{P}$, let $\delta^*(\pmb{v}|\Xi)$ denote the support function of $\Xi$ and $\text{VaR}_\epsilon^\mathbb{P}(\pmb{v}^T\tilde{\pmb{\xi}})$ denote the VaR at level $\epsilon$ with respect to $\pmb{v}$, defined respectively as
\[\delta^*(\pmb{v}|\Xi)\equiv\sup_{\pmb{\xi}\in\Xi}\pmb{v}^T\pmb{\xi},\ \text{VaR}_\epsilon^\mathbb{P}(\pmb{v}^T\tilde{\pmb{\xi}})\equiv\inf\{t:\mathbb{P}(\pmb{v}^T\tilde{\pmb{\xi}}\leq t)\geq1-\epsilon\}.\] 

As a generalization of Theorem 1 in \cite{main}, the theorem below demonstrates that the tractability of the probability guarantee problem under BDRCC depends on the relationship between an upper bound of VaR over $\Theta_{\mathcal{S}^N}$ and the support function of the uncertainty set $\Xi_\epsilon$. 

\begin{theorem}\label{thm2.1}
    Under Assumptions \ref{ass2.1} and \ref{ass2.2}, if the non-empty, convex, and compact uncertainty set $\Xi_\epsilon$ satisfies  $\delta^*(\pmb{v}|\Xi_\epsilon)\geq\sup_{\pmb{\theta}\in\Theta_{\mathcal{S}^N}}\text{VaR}_\epsilon^{\mathbb{P}_{\pmb{\theta}}}(\pmb{v}^T\tilde{\pmb{\xi}})$  for every $\pmb{v}\in\mathbb{R}^d$, then for any $\pmb{x}^*\in\mathbb{R}^k$, the constraint 
\[g({\pmb{\xi}},\pmb{x}^*)\leq0,\ \forall \pmb{\xi}\in\Xi_\epsilon,\]
implies the BDRCC
\begin{equation}
	\inf_{\pmb{\theta}\in\Theta_{\mathcal{S}^N}}\mathbb{P}_{\pmb{\theta}}(g(\tilde{\pmb{\xi}},\pmb{x}^*)\leq0)\geq1-\epsilon.\label{BDRCC}
\end{equation}
\end{theorem}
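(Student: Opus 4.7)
My plan is to proceed by contradiction: suppose that the robust constraint $g(\pmb{\xi},\pmb{x}^*)\leq 0$ holds for every $\pmb{\xi}\in\Xi_\epsilon$, yet the BDRCC fails, so there exists some $\pmb{\theta}^\dagger\in\Theta_{\mathcal{S}^N}$ with $\mathbb{P}_{\pmb{\theta}^\dagger}(g(\tilde{\pmb{\xi}},\pmb{x}^*)>0)>\epsilon$. The central geometric object will be the strict upper-level set $\Omega:=\{\pmb{\xi}\in\mathbb{R}^d:g(\pmb{\xi},\pmb{x}^*)>0\}$. By Assumption~\ref{ass2.2}, $g(\cdot,\pmb{x}^*)$ is concave and finite-valued on all of $\mathbb{R}^d$, hence continuous by the standard real-analytic fact about finite concave functions; therefore $\Omega$ is an open convex set. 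The robust premise forces $\Xi_\epsilon\cap\Omega=\emptyset$, and the failure hypothesis makes $\Omega$ nonempty.

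Next I would apply a standard separating hyperplane theorem to the disjoint convex sets $\Xi_\epsilon$ (nonempty, convex, compact) and $\Omega$ (nonempty, open, convex). This yields a nonzero $\pmb{v}\in\mathbb{R}^d$ such that $\sup_{\pmb{\xi}\in\Xi_\epsilon}\pmb{v}^T\pmb{\xi}\leq\inf_{\pmb{\xi}\in\Omega}\pmb{v}^T\pmb{\xi}$; set $t:=\delta^*(\pmb{v}|\Xi_\epsilon)$. Openness of $\Omega$ upgrades the right inequality to the strict bound $\pmb{v}^T\pmb{\xi}>t$ for every $\pmb{\xi}\in\Omega$, since any point of $\Omega$ attaining value $t$ would admit a small perturbation in the direction $-\pmb{v}$ remaining in $\Omega$ with strictly smaller value.

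The final step is to transport the probability bound through this hyperplane and contradict the hypothesis on $\delta^*$. The inclusion $\Omega\subseteq\{\pmb{\xi}:\pmb{v}^T\pmb{\xi}>t\}$ gives $\mathbb{P}_{\pmb{\theta}^\dagger}(\pmb{v}^T\tilde{\pmb{\xi}}>t)>\epsilon$, equivalently $F(t):=\mathbb{P}_{\pmb{\theta}^\dagger}(\pmb{v}^T\tilde{\pmb{\xi}}\leq t)<1-\epsilon$. Right-continuity of the one-dimensional CDF $F$ then yields some $\delta>0$ with $F(t+\delta)<1-\epsilon$, whence $\text{VaR}_\epsilon^{\mathbb{P}_{\pmb{\theta}^\dagger}}(\pmb{v}^T\tilde{\pmb{\xi}})\geq t+\delta>t=\delta^*(\pmb{v}|\Xi_\epsilon)$. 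Taking the supremum over $\pmb{\theta}\in\Theta_{\mathcal{S}^N}$ on the left then contradicts the standing assumption $\delta^*(\pmb{v}|\Xi_\epsilon)\geq\sup_{\pmb{\theta}\in\Theta_{\mathcal{S}^N}}\text{VaR}_\epsilon^{\mathbb{P}_{\pmb{\theta}}}(\pmb{v}^T\tilde{\pmb{\xi}})$.

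The delicate step I anticipate is keeping the strict inequality alive at each transition. The probability excess over $\epsilon$ is strict, but separation theorems deliver only weak inequalities, and $\text{VaR}_\epsilon$ is defined via an infimum of a closed sublevel set of the CDF, which could collapse a strict CDF bound into a weak VaR bound. The two rescues are: openness of $\Omega$, which converts the separating inequality into a strict one on the $\Omega$ side; and right-continuity of the CDF, which converts the strict bound $F(t)<1-\epsilon$ into $F(t+\delta)<1-\epsilon$ for some positive $\delta$, thereby forcing $\text{VaR}_\epsilon>t$ strictly rather than merely $\geq t$. Once these two passages are secured, the rest is routine convex analysis and elementary measure theory.
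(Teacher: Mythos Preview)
Your proposal is correct and follows essentially the same separating-hyperplane argument as the paper. The only difference is cosmetic: the paper works with the closed superlevel sets $\{g\ge t\}$ for $t>0$, invokes strict separation (compact versus closed), and then lets $t\to 0$, whereas you work directly with the open set $\{g>0\}$, use ordinary separation, and recover strictness via openness and right-continuity of the CDF.
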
\par
 \begin{proof}
      For any $\pmb{x}^*\in\mathbb{R}^k$ that satisfies $g(\pmb{\xi},\pmb{x}^*)\leq0,\ \forall\pmb{\xi}\in\Xi_\epsilon$, it is obvious that $\max_{\pmb{\xi}\in\Xi_\epsilon}g(\pmb{\xi}, \pmb{x}^*) \leq 0$.
Define a closed and convex set $\Xi_t:=\{\pmb{\xi}\in\mathbb{R}^d: g (\pmb{\xi}, \pmb{x}^*)\geq t\}$ with $t > 0$. It is obvious that the intersection of $\Xi_\epsilon$ and $\Xi_t$ is empty.  According to the separating hyperplane theorem,  $\Xi_\epsilon$ and $\Xi_t$ can be separated by a strict separating hyperplane $\pmb{v}^T\pmb{\xi} = {v}_0$ such that $\pmb{v}^T\pmb{\xi}_1 >{v}_0 > \pmb{v}^T\pmb{\xi}_2$ for all $\pmb{\xi}_1\in\Xi_t$ and $\pmb{\xi}_2\in\Xi_\epsilon$. Thus $v_0$ satisfies
\[{v}_0>\max_{\pmb{\xi}\in\Xi_\epsilon}\pmb{v}^T\pmb{\xi}=\delta^*(\pmb{v}|\Xi_\epsilon)\geq\sup_{\mathbb{P}\in\mathcal{P}_{\Theta_{\mathcal{S}^N}}}\text{VaR}_\epsilon^{\mathbb{P}}(\pmb{v}^T\tilde{\pmb{\xi}})=\sup_{\pmb{\theta}\in\Theta_{\mathcal{S}^N}}\text{VaR}_\epsilon^{\mathbb{P}_{\pmb{\theta}}}(\pmb{v}^T\tilde{\pmb{\xi}}),\]
which leads to 
\[\mathbb{P}_{\pmb{\theta}}(g(\tilde{\pmb{\xi}},\pmb{x}^*)\geq t)\leq\mathbb{P}_{\pmb{\theta}}(\pmb{v}^T\tilde{\pmb{\xi}}>{v}_0)\leq\mathbb{P}_{\pmb{\theta}}(\pmb{v}^T\tilde{\pmb{\xi}}>\text{VaR}_\epsilon^{\mathbb{P}_{\pmb{\theta}}}(\pmb{v}^T\tilde{\pmb{\xi}}))\leq\epsilon,\ \forall\pmb{\theta}\in\Theta_{\mathcal{S}^N}.\]
Letting $t\to0$ gives us that $\mathbb{P}_{\pmb{\theta}}(g(\tilde{\pmb{\xi}},\pmb{x}^*)>0)\leq\epsilon$ for all $\pmb{\theta}\in\Theta_{\mathcal{S}^N}$.  Thus we can conclude that
\[	\text{if }g({\pmb{\xi}},\pmb{x}^*)\leq0,\ \forall \pmb{\xi}\in\Xi_\epsilon,\ \text{then }\inf_{\pmb{\theta}\in\Theta_{\mathcal{S}^N}}\mathbb{P}_{\pmb{\theta}}(g(\tilde{\pmb{\xi}},\pmb{x}^*)\leq0)\geq1-\epsilon.\]
That is, $\Xi_\epsilon$ implies the BDRCC (\ref{BDRCC}).
 \end{proof}

This theorem paves the way for the construction of a new type of data-driven uncertainty sets later on.
\subsection{Credible Interval for Parametric Distributions}
To construct a reasonable and implementable uncertainty set for $\pmb{\theta}$ which includes the true parameter $\pmb{\theta}^c$ of the underlying probability distribution with high probability, we will briefly review some needed results about credible interval based on a parameterized family of distributions. 

Under the parametric assumption on $\mathbb{P}^c$, the probability density function (pdf) $p(\cdot|\pmb{\theta})$ is also defined with respect to the parameter $ \pmb{\theta} $.  Different from the existing literature, here $\pmb{\theta}$ is treated as a random variable which lies in the set $\Theta$ and follows a prior pdf $p(\pmb{\theta})$. With a given sample set ${\mathcal{S}^{N}} = \{\pmb{\xi}^1,\cdots,\pmb{\xi}^N\}$, the pdf of the posterior distribution $\mathbb{P}_{\mathcal{S}^N}$ is determined, according to the Bayes’ rule, as
\begin{equation}
	p(\pmb{\theta}|{\mathcal{S}^{N}})=\frac{p(\mathcal{S}^N|\pmb{\theta})p(\pmb{\theta})}{\int_\Theta p(\mathcal{S}^N|\pmb{\theta})p(\pmb{\theta})d\pmb{\theta}},\label{new2.1}
\end{equation}
where $p(\mathcal{S}^N|\pmb{\theta})=\prod_{i=1}^Np(\pmb{\xi}^i|\pmb{\theta})$ is the conditional pdf of the sample data.

Based on the posterior distribution $p(\pmb{\theta}|{\mathcal{S}^{N}})$, we can construct an interval which covers the true value of the parameter with a high probability. For example, a 95\% central posterior interval for $\pmb{\theta}$ will cover its true value 95\% of the time when sampling is repeated with respect to the true $\pmb{\theta}^c$. This interval based on Bayesian posterior distribution is called the credible interval  \cite{bayes,BDA}, which will play a critical role in our hereinafter construction. If the Bayesian credible interval has a closed form, we can easily construct a set  $\Theta(\mathcal{S}^N,\alpha)$ such that $\mathbb{P}_{\mathcal{S}^N}(\mathbb{P}_{\pmb{\theta}^c}\in\{\mathbb{P}_{\pmb{\theta}}|\pmb{\theta}\in{\Theta}({\mathcal{S}^{N}},\alpha)\})=1-\alpha$ for any given $0\leq\alpha<1$. 

For the general case, we can consider a quadratic approximation to the log-posterior density function $ \log p(\pmb{\theta}|{\mathcal{S}^{N}}) $ that is centered at the posterior mode (which in general is easy to compute using off-the-shelf optimization routines). Concretely, a Taylor series expansion of $ \log p(\pmb{\theta}|{\mathcal{S}^{N}}) $ centered at the posterior mode, $\hat{\pmb{\theta}}$, which is assumed to be in the interior of the parameter space, gives
\begin{equation}
	\log p(\pmb{\theta}|{\mathcal{S}^{N}})=\log p(\hat{\pmb{\theta}}|{\mathcal{S}^{N}})+\frac{1}{2}(\pmb{\theta}-\hat{\pmb{\theta}})^T\left[\frac{d^2}{d\pmb{\theta}^2}\log p(\pmb{\theta}|{\mathcal{S}^{N}})\right]_{\pmb{\theta}=\hat{\pmb{\theta}}}(\pmb{\theta}-\hat{\pmb{\theta}})+\cdots,\label{2.1}
\end{equation}
where the linear term in the expansion is zero because the log-posterior density has zero derivative at its mode.

\begin{lemma}(Gelman et al.\cite{BDA})\label{lem2.1}
Suppose that the pdf of the posterior distribution $\mathbb{P}_{\mathcal{S}_N}$ is a continuous function of $\pmb{\theta}$, and $\pmb{\theta}^c$ is not on the boundary of $\Theta$,  the logarithm of the posterior density (\ref{2.1}) can be approximated by a quadratic function of $\pmb{\theta}$, yielding the following approximation:
\begin{equation}
	p(\pmb{\theta}|{\mathcal{S}^{N}})\approx N(\hat{\pmb{\theta}},	[I(\hat{\pmb{\theta}})]^{-1}).\label{2.2}
\end{equation}
Here $I(\pmb{\theta})$ is the observed information matrix defined as
\[I (\pmb{\theta})=-\frac{d^2}{d\pmb{\theta}^2}\log p(\pmb{\theta}|{\mathcal{S}^{N}}).\]
Furthermore, if the mode $\hat{\pmb{\theta}}$ is in the interior of the parameter space, $I(\hat{\pmb{\theta}})$ is positive definite.
\end{lemma}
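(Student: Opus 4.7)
The plan is to upgrade the Taylor expansion already written as (2.1) into the stated normal approximation (2.2) by controlling the remainder and then exponentiating. First I would point out that because $\hat{\pmb{\theta}}$ is an interior posterior mode and the posterior density is continuous (hence differentiable in a neighborhood of $\hat{\pmb{\theta}}$ under standard regularity), the first-order term $\nabla \log p(\pmb{\theta}|\mathcal{S}^N)\big|_{\pmb{\theta}=\hat{\pmb{\theta}}}$ vanishes, which is exactly the reason the linear term is missing in (2.1). The Hessian in the quadratic term is, by definition, $-I(\hat{\pmb{\theta}})$, so (2.1) rewrites as
\[
\log p(\pmb{\theta}|\mathcal{S}^N) = \log p(\hat{\pmb{\theta}}|\mathcal{S}^N) - \tfrac{1}{2}(\pmb{\theta}-\hat{\pmb{\theta}})^T I(\hat{\pmb{\theta}})(\pmb{\theta}-\hat{\pmb{\theta}}) + R_N(\pmb{\theta}),
\]
where $R_N(\pmb{\theta})$ collects the cubic and higher-order terms in $\pmb{\theta}-\hat{\pmb{\theta}}$.

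Second I would argue that in the asymptotic regime the remainder $R_N$ is negligible. Writing $\log p(\pmb{\theta}|\mathcal{S}^N) = \log p(\pmb{\theta}) + \sum_{i=1}^N \log p(\pmb{\xi}^i|\pmb{\theta}) - \log\!\int_\Theta p(\mathcal{S}^N|\pmb{\theta})p(\pmb{\theta})\,d\pmb{\theta}$, the Hessian $I(\hat{\pmb{\theta}})$ grows linearly in $N$, while the third-order term grows only linearly in $N$ but is multiplied by $\|\pmb{\theta}-\hat{\pmb{\theta}}\|^3$. Since the posterior concentrates on a neighborhood of radius $O(N^{-1/2})$ around $\hat{\pmb{\theta}}$, the quadratic term is of order $1$ whereas $R_N$ is of order $N^{-1/2}$ on that neighborhood and the mass outside is exponentially small. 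Exponentiating and absorbing $p(\hat{\pmb{\theta}}|\mathcal{S}^N)$ into the normalizing constant then yields
\[
p(\pmb{\theta}|\mathcal{S}^N) \;\approx\; C\,\exp\!\left\{-\tfrac{1}{2}(\pmb{\theta}-\hat{\pmb{\theta}})^T I(\hat{\pmb{\theta}})(\pmb{\theta}-\hat{\pmb{\theta}})\right\},
\]
which is, up to normalization, the density of $N(\hat{\pmb{\theta}},[I(\hat{\pmb{\theta}})]^{-1})$, giving (2.2).

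Third I would establish the positive definiteness claim. Since $\hat{\pmb{\theta}}$ is a maximizer of $\log p(\pmb{\theta}|\mathcal{S}^N)$ in the interior of $\Theta$, the second-order necessary condition forces $I(\hat{\pmb{\theta}})\succeq 0$. To obtain strict positive definiteness, I would invoke the assumption that $\hat{\pmb{\theta}}$ is a strict (isolated) local maximum together with the continuity of the second derivative, which prevents any direction of zero curvature; this is exactly the standard regularity condition implicit in the Laplace expansion. With $I(\hat{\pmb{\theta}})\succ 0$, the covariance $[I(\hat{\pmb{\theta}})]^{-1}$ is well-defined and the Gaussian approximation is non-degenerate.

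The main obstacle, and the part most deserving of care, is making the ``$\approx$'' in the displayed expansion quantitative: one has to bound the contribution of $R_N$ uniformly on a shrinking neighborhood of $\hat{\pmb{\theta}}$ while simultaneously showing that the posterior places asymptotically all of its mass inside that neighborhood. This is essentially the content of the Laplace/Bernstein--von Mises argument; since the lemma is cited from Gelman et al., I would appeal to that reference for the precise regularity conditions (existence of third derivatives of $\log p(\pmb{\xi}|\pmb{\theta})$ with an integrable envelope, and identifiability of $\pmb{\theta}^c$) rather than re-deriving them, and use them only to justify the two displayed approximations above.
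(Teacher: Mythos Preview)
The paper does not prove this lemma at all: it is quoted verbatim from Gelman et al.\ and used as a black box, so there is no ``paper's own proof'' to compare against. Your proposal is therefore not competing with anything in the paper; it is a sketch of the standard Laplace-approximation argument that the cited reference itself gives, and in its broad strokes (vanishing gradient at the interior mode, quadratic term equal to $-\tfrac12(\pmb{\theta}-\hat{\pmb{\theta}})^T I(\hat{\pmb{\theta}})(\pmb{\theta}-\hat{\pmb{\theta}})$, remainder of lower order after $O(N^{-1/2})$ rescaling, exponentiation and renormalization) it is correct and is exactly the route Gelman et al.\ take.

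One point worth flagging: for the positive-definiteness claim you correctly observe that interiority of $\hat{\pmb{\theta}}$ alone yields only $I(\hat{\pmb{\theta}})\succeq 0$ via the second-order necessary condition, and that strict definiteness requires the additional assumption that $\hat{\pmb{\theta}}$ is a strict (nondegenerate) local maximum. The lemma as stated in the paper elides this, so your care here is appropriate; in Gelman et al.\ this is part of the implicit regularity package. Since the paper treats the whole lemma as a citation, simply recording that you rely on those standard regularity conditions---as you already do in your last paragraph---is the right level of detail.
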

With the given data ${\mathcal{S}^{N}}$, we can determine the posterior ${\hat{\pmb{\theta}}}$ and the observed information matrix $I(\hat{\pmb{\theta}})$. The Bayesian credible interval $\Theta({\mathcal{S}^{N}},\alpha)$ can then be defined as $\{\pmb{\theta}\in\Theta:||[I(\hat{\pmb{\theta}})]^{1/2} (\pmb{\theta} - \hat{\pmb{\theta}})||_2\leq z_{1-\frac{\alpha}{2}}\}$, where $z_{1-\frac{\alpha}{2}}$ is the ${1-\frac{\alpha}{2}}$ quantile of the standard normal distribution. This credible interval guarantees that we will incorrectly reject that $\pmb{\theta}^c\in\bar{\Theta}({\mathcal{S}^{N}},\alpha):=\Theta\backslash\Theta({\mathcal{S}^{N}},\alpha)$ with probability at most $\alpha$ with respect to ${\mathcal{S}^{N}}$. The Bayesian credible interval therefore leads to the following (approximate) $(1 - \alpha)$ credible set around  $\mathbb{P}_{\hat{\pmb{\theta}}}$ w.r.t. the posterior distribution  $\mathbb{P}_{\mathcal{S}^N}$:
\[\left\{\mathbb{P}_{\pmb{\theta}}|\pmb{\theta}\in\Theta({\mathcal{S}^{N}},\alpha)\right\}\text{ which satisfies }\mathbb{P}_{\mathcal{S}^N}(\mathbb{P}_{\pmb{\theta}^c}\in\{\mathbb{P}_{\pmb{\theta}}|\pmb{\theta}\in{\Theta}({\mathcal{S}^{N}},\alpha)\})=1-\alpha.\]
That is, we will incorrectly reject that $\mathbb{P}_{\pmb{\theta}^c}\in\{\mathbb{P}_{\pmb{\theta}}|\pmb{\theta}\in\bar{\Theta}({\mathcal{S}^{N}},\alpha)\}$ at most $\alpha$ w.r.t. ${\mathcal{S}^{N}}$.

In this way, the probability (w.r.t. $\mathbb{P}_{\mathcal{S}^{N}}$) that $\mathbb{P}_{\pmb{\theta}^c}$ belongs to our credible interval is at least $1 - \alpha$. Therefore, different from current researches, despite not knowing $\mathbb{P}_{\pmb{\theta}^c}$, we can use a Bayesian credible interval to create a set of distributions from the data that can contain $\mathbb{P}_{\pmb{\theta}^c}$ for any specified probability.

\begin{theorem}
For the fixed uncertainty set $\Theta({\mathcal{S}^{N}},\alpha)\}$ for the parameter $\pmb{\theta}$, the BDRCC 

\begin{equation}
\inf_{\pmb{\theta}\in\Theta({\mathcal{S}^{N}},\alpha)}\mathbb{P}_{\pmb{\theta}}\left(g(\tilde{\pmb{\xi}},\pmb{x})\leq0\right)\geq1-{\epsilon}\label{BDRCC1}
\end{equation}
will imply the DRCC
\begin{equation}
    \mathbb{P}_{\pmb{\theta}^c}\left(g(\tilde{\pmb{\xi}},\pmb{x})\leq0\right)\geq1-{\epsilon}
\label{drcc1}
\end{equation}
for $\mathbb{P}_{\pmb{\theta}^c}$ with probability at least $1-\alpha$ with respect to $\mathbb{P}_{\mathcal{S}^{N}}$.
\end{theorem}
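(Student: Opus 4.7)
My plan is to exploit the defining property of the Bayesian credible interval $\Theta(\mathcal{S}^N,\alpha)$ that was constructed at the end of Section 2.2; once the right event is isolated, the implication reduces to a single monotonicity observation about the infimum appearing in (\ref{BDRCC1}). No appeal to Assumption \ref{ass2.2} or to Theorem \ref{thm2.1} will be needed, because the claim is purely a containment statement combined with a basic inequality.

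The first step is to record the credible-interval coverage guarantee
\[
\mathbb{P}_{\mathcal{S}^N}\bigl(\pmb{\theta}^c\in\Theta(\mathcal{S}^N,\alpha)\bigr)\;\geq\;1-\alpha,
\]
which is exactly the property used to define $\Theta(\mathcal{S}^N,\alpha)$ immediately before the theorem. This is the only source of the $1-\alpha$ probability that appears in the conclusion, so every other part of the argument can be run pointwise. I would let $E$ denote the event $\{\pmb{\theta}^c\in\Theta(\mathcal{S}^N,\alpha)\}$, with posterior probability at least $1-\alpha$.

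The second step is to argue pointwise on $E$. Fix a realization of the data such that $E$ occurs and fix any $\pmb{x}$ that satisfies (\ref{BDRCC1}). Since on $E$ the true parameter $\pmb{\theta}^c$ is a particular element of the set indexing the infimum in (\ref{BDRCC1}), monotonicity of the infimum gives
\[
\mathbb{P}_{\pmb{\theta}^c}\bigl(g(\tilde{\pmb{\xi}},\pmb{x})\leq 0\bigr)\;\geq\;\inf_{\pmb{\theta}\in\Theta(\mathcal{S}^N,\alpha)}\mathbb{P}_{\pmb{\theta}}\bigl(g(\tilde{\pmb{\xi}},\pmb{x})\leq 0\bigr)\;\geq\;1-\epsilon,
\]
which is exactly (\ref{drcc1}). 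Combining the two steps, the implication $(\ref{BDRCC1})\Rightarrow(\ref{drcc1})$ holds on $E$, hence with $\mathbb{P}_{\mathcal{S}^N}$-probability at least $1-\alpha$.

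Because the argument is so direct, there is no real technical obstacle. The only point that calls for care is conceptual: the probability $\mathbb{P}_{\mathcal{S}^N}$ must be read in the Bayesian sense, so that given the observed sample the unknown $\pmb{\theta}^c$ is distributed according to the posterior and the credible set is a deterministic function of the data. A secondary subtlety, worth mentioning in the final write-up, is that when $\Theta(\mathcal{S}^N,\alpha)$ is obtained from the Laplace-type approximation of Lemma \ref{lem2.1} rather than from a closed-form credible interval, the coverage bound is asymptotic, so the $\geq 1-\alpha$ in the conclusion inherits the same approximate qualifier.
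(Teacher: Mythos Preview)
Your proposal is correct and follows essentially the same approach as the paper's own proof: both reduce the claim to the single inclusion $\{\pmb{\theta}^c\in\Theta(\mathcal{S}^N,\alpha)\}\subseteq\{(\ref{BDRCC1})\Rightarrow(\ref{drcc1})\}$ and then invoke the credible-interval coverage $\mathbb{P}_{\mathcal{S}^N}(\pmb{\theta}^c\in\Theta(\mathcal{S}^N,\alpha))\geq 1-\alpha$. Your write-up is somewhat more explicit about the monotonicity-of-infimum step and adds the useful caveat about the Laplace approximation, but the underlying argument is identical.
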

\begin{proof}
It is obviously since
    \begin{equation}
	\nonumber
		\mathbb{P}_{\mathcal{S}^{N}}\left(\text{the BDRCC }(\ref{BDRCC1}) \text{ implies DRCC} (\ref{drcc1})\right)
		\geq\mathbb{P}_{\mathcal{S}^{N}}(\pmb{\theta}^c\in\Theta({\mathcal{S}^{N}},\alpha))
		\geq1-\alpha.
\end{equation}
\end{proof}
\section{Construction of the data-driven uncertainty set}
With the Bayesian credible interval defined above, we can configure an ambiguity set $\mathcal{P}_{\Theta({\mathcal{S}^{N}},\alpha)}=\{\mathbb{P}_{\pmb{\theta}}|\pmb{\theta}\in\Theta({\mathcal{S}^{N}},\alpha)\}$ which  will contain $\mathbb{P}_{\pmb{\theta}^c}$ with probability at least $1 - \alpha$ for any $0\leq\alpha<1$. With this and Theorem \ref{thm2.1}, we can then find an uncertainty set $\Xi({\mathcal{S}^{N}},\epsilon,\alpha)$ which will ensure the relevant BDRCC (\ref{BDRCC1}). The key here is to construct a support function of $\Xi({\mathcal{S}^{N}},\epsilon,\alpha)$ which satisfies $\delta^*(\pmb{v}|\Xi({\mathcal{S}^{N}},\epsilon,\alpha))\geq\sup_{\pmb{\theta}\in\Theta_{\mathcal{S}^N}}\text{VaR}_\epsilon^{\mathbb{P}_{\pmb{\theta}}}(\pmb{v}^T\tilde{\pmb{\xi}})$, and we accomplish this by finding a positive homogeneity type of upper bound of VaR. The concrete scheme is stated as the following Algorithm 1.
	\begin{algorithm}[h]  
	\caption{Constructing an uncertainty set $\Xi({\mathcal{S}^{N}},\epsilon,\alpha)$}  \label{algo1}
	\hspace*{0.02in}{\bf Input:}
	the sample set ${\mathcal{S}^{N}}=\{\pmb{\xi}^1,\cdots,\pmb{\xi}^{N}\}$,  the prior distribution of $\tilde{\pmb{\xi}}$, the probability guarantee level $\epsilon\in(0,1)$, and the credible level $\alpha\in[0,1)$.\\
	\hspace*{0.02in}{\bf Output:} 
	the uncertainty set $\Xi({\mathcal{S}^{N}},\epsilon,\alpha)$. \\
 $\bullet$ Calculate the exact posterior distribution $\mathbb{P}_{\mathcal{S}^{N}}$ by (\ref{new2.1}) or estimate $\mathbb{P}_{\mathcal{S}^{N}}$ through the approximation (\ref{2.2}), and then configure the Bayesian credible interval $\Theta({\mathcal{S}^{N}},\alpha)$.\\
	$\bullet$ For chosen $\pmb{\theta}$ and $\epsilon$, identify a positive homogenous upper bound to VaR as follows:
  \[\text{VaR}_\epsilon^{\mathbb{P}_{\pmb{\theta}}}(\pmb{v}^T\tilde{\pmb{\xi}})\leq\pmb{v}^T\pmb{B}(\pmb{\theta},\epsilon).\]
  $\bullet$ Find a closed and convex set $\Xi({\mathcal{S}^{N}},\epsilon,\alpha)$ such that $\sup_{\pmb{\theta}\in\Theta({\mathcal{S}^{N}},\alpha)}\pmb{v}^T\pmb{B}(\pmb{\theta},\epsilon)=
  \delta^* (\pmb{v}|\Xi({\mathcal{S}^{N}},\epsilon,\alpha))$.
\end{algorithm}

\begin{theorem}\label{thm3.1}
    For any $0<\epsilon<1$, the uncertainty set $\Xi({\mathcal{S}^{N}},\epsilon,\alpha)=\{\pmb{B}(\pmb{\theta},\epsilon),\pmb{\theta}\in\Theta({\mathcal{S}^{N}},\alpha)\}$ implies the BDRCC (\ref{BDRCC1}) which induce DRCC (\ref{drcc1})
    for $\mathbb{P}_{\pmb{\theta}^c}$ with probability at least $1-\alpha$ with respect to $\mathbb{P}_{\mathcal{S}^{N}}$.
\end{theorem}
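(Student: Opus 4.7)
The plan is to assemble this result by chaining together the two machines already built in the paper: Theorem \ref{thm2.1}, which translates a condition on support functions into the BDRCC, and the theorem immediately preceding Section 3, which lifts the BDRCC over the credible set to the DRCC for $\mathbb{P}_{\pmb{\theta}^c}$ with posterior probability at least $1-\alpha$. The set $\Xi({\mathcal{S}^{N}},\epsilon,\alpha)$ coming from Algorithm \ref{algo1} is exactly engineered so that its support function matches the bound produced in the algorithm's second step, so the theorem should essentially be the composition of these two implications.

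First I would verify the hypothesis of Theorem \ref{thm2.1} for the particular uncertainty set $\Xi({\mathcal{S}^{N}},\epsilon,\alpha)$ constructed by Algorithm \ref{algo1}, taking $\Theta_{\mathcal{S}^{N}}=\Theta({\mathcal{S}^{N}},\alpha)$. Fix an arbitrary direction $\pmb{v}\in\mathbb{R}^d$. By the positive homogeneous upper bound identified in the second step of the algorithm,
\[
\text{VaR}_\epsilon^{\mathbb{P}_{\pmb{\theta}}}(\pmb{v}^T\tilde{\pmb{\xi}})\leq \pmb{v}^T\pmb{B}(\pmb{\theta},\epsilon)\qquad\text{for all }\pmb{\theta}\in\Theta({\mathcal{S}^{N}},\alpha).
\]
Taking the supremum over $\pmb{\theta}\in\Theta({\mathcal{S}^{N}},\alpha)$ on both sides and invoking the defining property of $\Xi({\mathcal{S}^{N}},\epsilon,\alpha)$ from the third step of the algorithm yields
\[
\sup_{\pmb{\theta}\in\Theta({\mathcal{S}^{N}},\alpha)}\text{VaR}_\epsilon^{\mathbb{P}_{\pmb{\theta}}}(\pmb{v}^T\tilde{\pmb{\xi}})\leq \sup_{\pmb{\theta}\in\Theta({\mathcal{S}^{N}},\alpha)}\pmb{v}^T\pmb{B}(\pmb{\theta},\epsilon)=\delta^*(\pmb{v}|\Xi({\mathcal{S}^{N}},\epsilon,\alpha)).
\]
Since this holds for every $\pmb{v}$, Theorem \ref{thm2.1} applies and gives that any $\pmb{x}^*$ feasible for $g(\pmb{\xi},\pmb{x}^*)\leq 0$ for all $\pmb{\xi}\in\Xi({\mathcal{S}^{N}},\epsilon,\alpha)$ satisfies the BDRCC (\ref{BDRCC1}).

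The second step is then to lift this BDRCC statement to a guarantee on the true distribution. Here I would invoke the (unlabeled) theorem stated just above Section 3: on the event $\{\pmb{\theta}^c\in\Theta({\mathcal{S}^{N}},\alpha)\}$, the inequality $\inf_{\pmb{\theta}\in\Theta({\mathcal{S}^{N}},\alpha)}\mathbb{P}_{\pmb{\theta}}(g(\tilde{\pmb{\xi}},\pmb{x})\leq 0)\geq 1-\epsilon$ trivially forces the particular bound $\mathbb{P}_{\pmb{\theta}^c}(g(\tilde{\pmb{\xi}},\pmb{x})\leq 0)\geq 1-\epsilon$. The Bayesian credible interval property from Section 2.2 ensures $\mathbb{P}_{\mathcal{S}^{N}}(\pmb{\theta}^c\in\Theta({\mathcal{S}^{N}},\alpha))\geq 1-\alpha$, so monotonicity of probability gives the desired posterior guarantee that the DRCC (\ref{drcc1}) holds with probability at least $1-\alpha$ with respect to $\mathbb{P}_{\mathcal{S}^{N}}$.

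The main obstacle I anticipate is a bookkeeping issue rather than a conceptual one: Theorem \ref{thm2.1} explicitly requires $\Xi_\epsilon$ to be non-empty, convex and compact, whereas the raw parametric image $\{\pmb{B}(\pmb{\theta},\epsilon):\pmb{\theta}\in\Theta({\mathcal{S}^{N}},\alpha)\}$ written in the statement of the theorem need not be convex. I would resolve this by interpreting $\Xi({\mathcal{S}^{N}},\epsilon,\alpha)$ as the closed convex hull of this image (equivalently, the unique closed convex set whose support function equals $\pmb{v}\mapsto\sup_{\pmb{\theta}\in\Theta({\mathcal{S}^{N}},\alpha)}\pmb{v}^T\pmb{B}(\pmb{\theta},\epsilon)$, as the third step of Algorithm \ref{algo1} prescribes), together with the standing assumption that $\Theta({\mathcal{S}^{N}},\alpha)$ is compact so that the convex hull is compact as well; replacing the image by its closed convex hull does not enlarge the support function, so the chain of inequalities above is preserved and the argument goes through unchanged.
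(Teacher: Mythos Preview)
Your proposal is correct and follows essentially the same two-step route as the paper: verify the support-function hypothesis of Theorem \ref{thm2.1} via the bounds produced in Algorithm \ref{algo1}, then pass from the BDRCC to the DRCC by conditioning on the event $\{\pmb{\theta}^c\in\Theta({\mathcal{S}^{N}},\alpha)\}$, which has posterior probability at least $1-\alpha$. The only nuance worth flagging is that the paper's proof explicitly establishes the implication \emph{simultaneously for all} $0<\epsilon<1$ on a single event of posterior probability $\geq 1-\alpha$, using the observation that $\Theta({\mathcal{S}^{N}},\alpha)$ does not depend on $\epsilon$ so that $\bigcap_{0<\epsilon<1}\Theta({\mathcal{S}^{N}},\alpha)=\Theta({\mathcal{S}^{N}},\alpha)$; your argument treats a fixed $\epsilon$, which suffices for the theorem as stated, but the uniform version is what Theorem \ref{thm3.3} later cites, so it is worth making this $\epsilon$-independence explicit.
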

\begin{proof}
Due to Theorem \ref{thm2.1}, we can ensure that the closed and convex set $\Xi({\mathcal{S}^{N}},\epsilon,\alpha)$ constructed by Algorithm \ref{algo1} will satisfy 
\[	\text{if }g({\pmb{\xi}},\pmb{x}^*)\leq0,\ \forall \pmb{\xi}\in\Xi({\mathcal{S}^{N}},\epsilon,\alpha),\ \text{then }\inf_{\pmb{\theta}\in\Theta({\mathcal{S}^{N}},\alpha)}\mathbb{P}_{\pmb{\theta}}(g(\tilde{\pmb{\xi}},\pmb{x}^*)\leq0)\geq1-\epsilon.\] Moreover, from the definition of the support function, we can easily obtain the closed form of the uncertainty set as $\Xi({\mathcal{S}^{N}},\epsilon,\alpha)=\{\pmb{B}(\pmb{\theta},\epsilon),\pmb{\theta}\in\Theta({\mathcal{S}^{N}},\alpha)\}$ in our schema. We now demonstrate the properties of the constructed uncertainty set for all $0<\epsilon<1$.
    \begin{equation}
	\nonumber
	\begin{split}
		&\mathbb{P}_{\mathcal{S}^{N}}\left(\Xi({\mathcal{S}^{N}},\epsilon,\alpha)\text{ implies the DRCC in }(\ref{drcc1}) \text{ for $\mathbb{P}_{\pmb{\theta}^c}$}, \ 0<\epsilon<1 \right) \\
		=&\mathbb{P}_{\mathcal{S}^{N}}\left(\delta^*(\pmb{v}|\Xi({\mathcal{S}^{N}},\epsilon,\alpha))\geq\text{VaR}_\epsilon^{\mathbb{P}_{\pmb{\theta}^c}}(\pmb{v}^T\tilde{\pmb{\xi}}),\ \forall \pmb{v}\in\mathbb{R}^d, \ 0<\epsilon<1 \right) \\
		=&\mathbb{P}_{\mathcal{S}^{N}}\left(\sup_{\pmb{\theta}\in\Theta({\mathcal{S}^{N}},\alpha)}\pmb{v}^T\pmb{B}(\pmb{\theta},\epsilon)\geq\text{VaR}_\epsilon^{\mathbb{P}_{\pmb{\theta}^c}}(\pmb{v}^T\tilde{\pmb{\xi}}),\ \forall \pmb{v}\in\mathbb{R}^d,\ 0<\epsilon<1 \right)  \\
		\geq&\mathbb{P}_{\mathcal{S}^{N}}\left(\pmb{\theta}^c\in\cap_{\epsilon:0<\epsilon<1}\Theta({\mathcal{S}^{N}},\alpha)\right)\\
		=&\mathbb{P}_{\mathcal{S}^{N}}(\pmb{\theta}^c\in\Theta({\mathcal{S}^{N}},\alpha))\\
		\geq&1-\alpha.
	\end{split}
\end{equation}
The first equation is induced by Theorem \ref{thm2.1} with $\Theta_{\mathcal{S}^{N}}$ being viewed as the uncertainty set of $\pmb{\theta}^c$ and the second equation is obvious. The first inequality is based on our algorithm. With the definition of credible interval, $\Theta({\mathcal{S}^{N}},\alpha)$ is independent of $\epsilon$. Thus, the uncertainty set $\Xi({\mathcal{S}^{N}},\epsilon,\alpha)$ will  imply the DRCC in (\ref{drcc1}) for $\mathbb{P}_{\pmb{\theta}^c}$ with probability at least $1-\alpha$ w.r.t. $\mathbb{P}_{\mathcal{S}^{N}}$.
\end{proof}

The following theorem  states that when the support set of a random variable is known, the corresponding BDRCC can be approximated with the intersection of the support set and our constructed uncertainty set.

\begin{theorem}\label{thm3.2}
    Suppose supp$(\mathbb{P}_{\pmb{\theta}^c})\subseteq\Xi_0$ with $\Xi_0$ being closed and convex. If  for all $0<\epsilon<1$, the uncertainty set $\Xi({\mathcal{S}^{N}},\epsilon,\alpha)=\{\pmb{B}(\pmb{\theta},\epsilon),\pmb{\theta}\in\Theta({\mathcal{S}^{N}},\alpha)\}$ implies the BDRCC (\ref{BDRCC1}) for $\mathbb{P}_{\pmb{\theta}^c}$, then  $\Xi({\mathcal{S}^{N}},\epsilon,\alpha)\cap\Xi_0$ also implies the BDRCC (\ref{BDRCC1}).
\end{theorem}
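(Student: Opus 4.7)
The plan is to mimic the separating-hyperplane proof of Theorem \ref{thm2.1}, but carry the entire argument out inside the support $\Xi_0$. The crucial enabling fact is that $\mathbb{P}_{\pmb{\theta}^c}(\tilde{\pmb{\xi}}\in E)=\mathbb{P}_{\pmb{\theta}^c}(\tilde{\pmb{\xi}}\in E\cap\Xi_0)$ for every Borel set $E$, so the behavior of $g$ outside $\Xi_0$ contributes nothing to the chance probability and can be discarded at will.

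Concretely, I would fix $\pmb{x}^*$ with $g(\pmb{\xi},\pmb{x}^*)\leq 0$ for all $\pmb{\xi}\in\Xi({\mathcal{S}^{N}},\epsilon,\alpha)\cap\Xi_0$ and, for each $t>0$, form $\Xi_t=\{\pmb{\xi}\in\mathbb{R}^d:g(\pmb{\xi},\pmb{x}^*)\geq t\}$, which is closed and convex by Assumption \ref{ass2.2}. Then $\Xi({\mathcal{S}^{N}},\epsilon,\alpha)\cap\Xi_0$ (compact and convex, as the intersection of a compact convex set with a closed convex set) is disjoint from $\Xi_t\cap\Xi_0$ (closed and convex), so the strict separating hyperplane theorem supplies $\pmb{v}\in\mathbb{R}^d$ and $v_0\in\mathbb{R}$ strictly separating them. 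Using the support equality,
\[\mathbb{P}_{\pmb{\theta}^c}(g(\tilde{\pmb{\xi}},\pmb{x}^*)\geq t)=\mathbb{P}_{\pmb{\theta}^c}(\tilde{\pmb{\xi}}\in\Xi_t\cap\Xi_0)\leq\mathbb{P}_{\pmb{\theta}^c}(\pmb{v}^T\tilde{\pmb{\xi}}>v_0).\]
The standing hypothesis, via Theorem \ref{thm2.1} applied to the original $\Xi({\mathcal{S}^{N}},\epsilon,\alpha)$, gives $\delta^*(\pmb{v}|\Xi({\mathcal{S}^{N}},\epsilon,\alpha))\geq\text{VaR}_\epsilon^{\mathbb{P}_{\pmb{\theta}^c}}(\pmb{v}^T\tilde{\pmb{\xi}})$ for every $\pmb{v}$, and the support containment independently yields $\text{VaR}_\epsilon^{\mathbb{P}_{\pmb{\theta}^c}}(\pmb{v}^T\tilde{\pmb{\xi}})\leq\delta^*(\pmb{v}|\Xi_0)$. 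Combining these two upper bounds on VaR with the separation strip, I would argue that $v_0$ can be taken to dominate $\text{VaR}_\epsilon^{\mathbb{P}_{\pmb{\theta}^c}}(\pmb{v}^T\tilde{\pmb{\xi}})$, whence $\mathbb{P}_{\pmb{\theta}^c}(\pmb{v}^T\tilde{\pmb{\xi}}>v_0)\leq\epsilon$. Letting $t\downarrow 0$ closes the argument and delivers the BDRCC (\ref{BDRCC1}).

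The main obstacle is precisely this last comparison: a generic separating hyperplane only guarantees $v_0>\delta^*(\pmb{v}|\Xi({\mathcal{S}^{N}},\epsilon,\alpha)\cap\Xi_0)$, and this intersection support function can in general be strictly smaller than both $\delta^*(\pmb{v}|\Xi({\mathcal{S}^{N}},\epsilon,\alpha))$ and $\delta^*(\pmb{v}|\Xi_0)$. My plan to clear this gap is to exploit the support containment when selecting $v_0$ inside the admissible separation strip: because the distribution of $\pmb{v}^T\tilde{\pmb{\xi}}$ is confined to $[-\delta^*(-\pmb{v}|\Xi_0),\delta^*(\pmb{v}|\Xi_0)]$, its $(1-\epsilon)$-quantile lies in that bounded range and can be dominated by a suitable $v_0$ that still separates the two convex pieces sitting inside $\Xi_0$; as a fallback, I would absorb $\Xi_0$ into the positive-homogeneous upper bound $\pmb{B}(\pmb{\theta},\epsilon)$ produced by Algorithm \ref{algo1} and re-derive the support-function condition of Theorem \ref{thm2.1} directly for the truncated uncertainty set.
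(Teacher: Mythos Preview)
Your route parallels the paper's almost exactly. The paper also derives the two separate upper bounds $\sup_{\pmb{\theta}}\text{VaR}_\epsilon^{\mathbb{P}_{\pmb{\theta}}}(\pmb{v}^T\tilde{\pmb{\xi}})\leq\delta^*(\pmb{v}\mid\Xi(\mathcal{S}^N,\epsilon,\alpha))$ (from the hypothesis) and $\sup_{\pmb{\theta}}\text{VaR}_\epsilon^{\mathbb{P}_{\pmb{\theta}}}(\pmb{v}^T\tilde{\pmb{\xi}})\leq\delta^*(\pmb{v}\mid\Xi_0)$ (from the support containment), and then writes
\[
\sup_{\pmb{\theta}}\text{VaR}_\epsilon^{\mathbb{P}_{\pmb{\theta}}}(\pmb{v}^T\tilde{\pmb{\xi}})\ \leq\ \min\bigl(\delta^*(\pmb{v}\mid\Xi(\mathcal{S}^N,\epsilon,\alpha)),\,\delta^*(\pmb{v}\mid\Xi_0)\bigr)\ =\ \delta^*\bigl(\pmb{v}\mid\Xi_0\cap\Xi(\mathcal{S}^N,\epsilon,\alpha)\bigr),
\]
justifying the equality only by ``both $\Xi_0$ and $\Xi(\mathcal{S}^N,\epsilon,\alpha)$ are convex,'' and then appeals to Theorem~\ref{thm2.1}. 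The obstacle you isolate is precisely this equality, and you are right that it is false in general (take $\Xi=[-1,1]\times[0,2]$, $\Xi_0=[0,2]\times[-1,1]$, $\pmb{v}=(1,1)$: the minimum is $3$ but the intersection support function is $2$). So the step the paper treats as routine is exactly the gap you have correctly flagged.

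Your two patches do not close it. For the first, the separating hyperplane between $\Xi(\mathcal{S}^N,\epsilon,\alpha)\cap\Xi_0$ and $\Xi_t\cap\Xi_0$ only lets $v_0$ range over the open strip $\bigl(\delta^*(\pmb{v}\mid\Xi\cap\Xi_0),\,\inf_{\Xi_t\cap\Xi_0}\pmb{v}^T\pmb{\xi}\bigr)$; knowing merely that $\text{VaR}\leq\delta^*(\pmb{v}\mid\Xi_0)$ does not place it inside or below this strip, because $\Xi_t\cap\Xi_0\subseteq\Xi_0$ forces the right endpoint of the strip to lie \emph{below} $\delta^*(\pmb{v}\mid\Xi_0)$, not above. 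The fallback is too vague to assess: Algorithm~\ref{algo1} builds $\pmb{B}(\pmb{\theta},\epsilon)$ without any reference to $\Xi_0$, and an ad hoc truncation would destroy the positive homogeneity that underlies the identification $\delta^*(\pmb{v}\mid\Xi)=\pmb{v}^T\pmb{B}$. In short, you have located a genuine hole; neither your sketch nor the paper's displayed argument fills it, and a complete proof would need an idea beyond comparing the two support functions pointwise in $\pmb{v}$.
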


\textbf{Proof}. It can be seen from Theorem \ref{thm2.1} that $\sup_{\pmb{\theta}\in\Theta({\mathcal{S}^{N}},\alpha)}\text{VaR}^{\mathbb{P}_{\pmb{\theta}}}_\epsilon(\pmb{v}^T\tilde{\pmb{\xi}})\leq\delta^*(\pmb{v}|\Xi({\mathcal{S}^{N}},\epsilon,\alpha))$ for all $\pmb{v}$. What's more, since supp$(\mathbb{P}_{\pmb{\theta}})\subseteq\Xi_0$, we have 
\begin{equation}
\begin{split}
    	0 = &\sup_{\pmb{\theta}\in\Theta\left({\mathcal{S}^{N}},\alpha\right)}\mathbb{P}_{\pmb{\theta}}(\pmb{v}^T \tilde{\pmb{\xi}} > \max_{\pmb{\xi}\in \text{supp}(\mathbb{P}_{\pmb{\theta}^c})} \pmb{v}^T \xi)\\
     \geq &\sup_{\pmb{\theta}\in\Theta({\mathcal{S}^{N}},\alpha)}\mathbb{P}_{\pmb{\theta}}(\pmb{v}^T\tilde{\pmb{\xi}} > \max_{\pmb{\xi}\in\Xi_0}\pmb{v}^T\pmb{\xi}) = \sup_{\pmb{\theta}\in\Theta({\mathcal{S}^{N}},\alpha)}\mathbb{P}_{\pmb{\theta}}(\pmb{v}^T\tilde{\pmb{\xi}} > \delta^* (\pmb{v}|\Xi_0)).
\end{split}
\label{2.6}
\end{equation}
This implies $\sup_{\pmb{\theta}\in\Theta({\mathcal{S}^{N}},\alpha)}\text{VaR}_\epsilon^{\mathbb{P}_{\pmb{\theta}}}(\pmb{v}^T\tilde{\pmb{\xi}})\leq\delta^*(\pmb{v}|\Xi_0)$. Then we have \[\sup_{\pmb{\theta}\in\Theta({\mathcal{S}^{N}},\alpha)}\text{VaR}_\epsilon^{\mathbb{P}_{\pmb{\theta}}}(\pmb{v}^T\tilde{\pmb{\xi}})\leq\min(\delta^*(\pmb{v}|\Xi({\mathcal{S}^{N}},\epsilon,\alpha))\ \delta^*(\pmb{v}|\Xi_0))=\delta^*(\pmb{v}|\Xi_0\cap\Xi({\mathcal{S}^{N}},\epsilon,\alpha))\]
because both $\Xi_0$ and $\Xi({\mathcal{S}^{N}},\epsilon,\alpha)$ are convex sets. Thus, $\Xi({\mathcal{S}^{N}},\epsilon,\alpha)\cap\Xi_0$
implies the BDRCC (\ref{BDRCC1}) by Theorem \ref{thm2.1}. $\hfill\qedsymbol$

With Theorem \ref{thm3.1} and Theorem \ref{thm3.2}, we can conclude that the feasible decision $\pmb{x}$ for the RO problem induced from the uncertainty set $\Xi({\mathcal{S}^{N}},\epsilon,\alpha)=\{\pmb{B}(\pmb{\theta},\epsilon),\pmb{\theta}\in\Theta({\mathcal{S}^{N}},\alpha)\}$ will satisfy the corresponding BDRCC problem with a single constraint $g (\tilde{\pmb{\xi}},\pmb{x})\leq0$ at level $1-\epsilon$ with probability at least $1-\alpha$ with respect to $\mathbb{P}_{\mathcal{S}^{N}}$. Nevertheless, as quite a few studies in the literature (\cite{68,ref22,86}) show, we often need to consider the joint chance constraint case with multiple constraints $g_j(\tilde{\pmb{\xi}},\pmb{x}) \leq 0, j = 1,\cdots,J$. To address this, we can extend the idea of the BDRCC framework (\ref{1.5}) at level $1-\bar{\epsilon}$ with an ambiguity set $\Theta({\mathcal{S}^N,\alpha})$ of parameters to the following situation:
\begin{equation}
\inf_{\pmb{\theta}\in\Theta({\mathcal{S}^{N}},\alpha)}\mathbb{P}_{\pmb{\theta}}\left(\max_{j=1,\cdots,J}g_j(\tilde{\pmb{\xi}},\pmb{x})\leq0\right)\geq1-\bar{\epsilon}\label{2.3}
\end{equation}
for some given $\bar{\epsilon}$. A simple way to ensure (\ref{2.3}) is to set $\epsilon_j=\bar{\epsilon}/J$ and construct the uncertainty set $\Xi({\mathcal{S}^{N}},\epsilon_j,\alpha)$ w.r.t. the $j$-th constraint $g_j(\tilde{\pmb{\xi}},\pmb{x})\leq0$ by applying our schema, Algorithm 1. That is, we can easily approximate (\ref{2.3}) through
\begin{equation}	\inf_{\pmb{\theta}\in\Theta({\mathcal{S}^{N}},\alpha)}\mathbb{P}_{\pmb{\theta}}\left(g_j(\tilde{\pmb{\xi}},\pmb{x})\leq0\right)\geq1-\epsilon_j,\ \ {j=1,\cdots,J}.\label{new2.3}
\end{equation}
And thus the corresponding RO problem with the following $J$ constraints,
\begin{equation}
	g_j(\pmb{\xi},\pmb{x})\leq0,\ \forall\pmb{\xi}\in\Xi({\mathcal{S}^{N}},\epsilon_j,\alpha),\ j=1,\cdots,J\label{2.4}
\end{equation}
will imply the BDRCC (\ref{new2.3})  at level $1-\epsilon_j$,  respectively, w.r.t.  $\mathbb{P}_{\mathcal{S}^{N}}$. 
Finally, the RO problem with the following constraint
\[\max_{j=1,\cdots,J}\left\{\max_{\pmb{\xi}\in\Xi({\mathcal{S}^{N}},\epsilon_j,\alpha)}g_j(\pmb{\xi},\pmb{x})\right\}\leq0\]
will imply the BDRCC (\ref{2.3}) at level $1-\bar{\epsilon}$ w.r.t.  $\mathbb{P}_{\mathcal{S}^{N}}$. 

The above setting of (\ref{2.4}) only provides a trivial way to approximate (\ref{2.3}). A better way is to find optimal $\epsilon_j$, $j=1,\cdots,J$, satisfying $\sum_{j=1}^J\epsilon_j=\bar{\epsilon}$ through solving the following optimization problem w.r.t. $\epsilon_j$, $j=1,\cdots,J$:
\begin{equation}
	\min_{\epsilon_1+\cdots+\epsilon_J=\bar{\epsilon},\epsilon\geq0}\left\{\max_{j=1,\cdots,J}\left\{\max_{\pmb{\xi}\in\Xi({\mathcal{S}^{N}},\epsilon_j,\alpha)}g_j(\pmb{\xi},\pmb{x})\right\}\right\}\leq0 \label{2.5}
\end{equation}

The next theorem shows that a feasible solution to problem (\ref{2.5}) will satisfy the true DRCC 
\begin{equation}
\mathbb{P}_{\pmb{\theta}^c}\left(\max_{j=1,\cdots,J}g_j(\tilde{\pmb{\xi}},\pmb{x})\leq0\right)\geq1-\bar{\epsilon}\label{drccmulti}
\end{equation}
with probability at least $1-\alpha$ w.r.t. $\mathbb{P}_{\mathcal{S}^{N}}$ regardless of the choice of the sample set $\mathcal{S}^N$.

\begin{theorem}\label{thm3.3}
    With probability $1-\alpha$ w.r.t. $\mathbb{P}_{\mathcal{S}^{N}}$, a feasible solution $\pmb{x}$ to (\ref{2.5}) which implies BDRCC (\ref{2.3}) will satisfy the DRCC (\ref{drccmulti}).
\end{theorem}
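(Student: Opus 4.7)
The plan is to decouple the chain of implications into two pieces: the first implication, from feasibility for (\ref{2.5}) to the BDRCC (\ref{2.3}), is assumed in the hypothesis of the theorem and was already established through the reformulation steps (\ref{new2.3})--(\ref{2.5}); what remains is the second implication, from BDRCC (\ref{2.3}) to the true DRCC (\ref{drccmulti}). The claim is that this second implication holds on an event of $\mathbb{P}_{\mathcal{S}^N}$-probability at least $1-\alpha$, so the full statement follows directly.

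First, I would invoke the defining coverage property of the Bayesian credible interval from Section 2.2, namely
\[
\mathbb{P}_{\mathcal{S}^N}\bigl(\pmb{\theta}^c\in\Theta(\mathcal{S}^N,\alpha)\bigr)\geq 1-\alpha.
\]
This bound is independent of any particular choice of constraint index $j$ or of $\bar{\epsilon}$, since $\Theta(\mathcal{S}^N,\alpha)$ is constructed from the posterior without reference to the constraint functions $g_j$.

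Second, I would condition on the event $\{\pmb{\theta}^c\in\Theta(\mathcal{S}^N,\alpha)\}$. On this event, $\mathbb{P}_{\pmb{\theta}^c}$ is itself one of the distributions over which the infimum in BDRCC (\ref{2.3}) is taken, so
\[
\mathbb{P}_{\pmb{\theta}^c}\Bigl(\max_{j=1,\dots,J}g_j(\tilde{\pmb{\xi}},\pmb{x})\leq 0\Bigr)\;\geq\;\inf_{\pmb{\theta}\in\Theta(\mathcal{S}^N,\alpha)}\mathbb{P}_{\pmb{\theta}}\Bigl(\max_{j=1,\dots,J}g_j(\tilde{\pmb{\xi}},\pmb{x})\leq 0\Bigr)\;\geq\;1-\bar{\epsilon},
\]
where the last inequality uses the assumed BDRCC (\ref{2.3}). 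Thus the DRCC (\ref{drccmulti}) holds on this event. Combining with the coverage bound from the first step yields the $1-\alpha$ probability guarantee.

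Since the theorem statement already takes the implication from (\ref{2.5}) to (\ref{2.3}) as given, the only substantive step is transferring the infimum bound over $\Theta(\mathcal{S}^N,\alpha)$ to a pointwise bound at $\pmb{\theta}^c$, and the entire argument parallels the one used in Theorem \ref{thm3.1}. I do not expect any genuine obstacle here; the proof is essentially a one-line probabilistic conditioning argument once the coverage property of the credible interval is in hand. The only care needed is to emphasize that the credible interval $\Theta(\mathcal{S}^N,\alpha)$ and its coverage probability do not depend on the specific optimal splitting $(\epsilon_1,\dots,\epsilon_J)$ chosen in (\ref{2.5}), so the same event of probability $\geq 1-\alpha$ works uniformly.
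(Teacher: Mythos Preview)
Your proof is correct, and the underlying mechanism is the same as the paper's: both rest on the coverage property $\mathbb{P}_{\mathcal{S}^N}(\pmb{\theta}^c\in\Theta(\mathcal{S}^N,\alpha))\geq 1-\alpha$ and the fact that $\Theta(\mathcal{S}^N,\alpha)$ is independent of the $\epsilon_j$'s. The route differs slightly in presentation. The paper invokes Theorem~\ref{thm3.1} and works through the uncertainty sets $\Xi(\mathcal{S}^N,\epsilon_j,\alpha)$, arguing that with probability at least $1-\alpha$ each $\Xi(\mathcal{S}^N,\epsilon_j,\alpha)$ implies the single-constraint DRCC at level $1-\epsilon_j$ simultaneously for all $j$, and then combines these via the union bound to obtain the joint DRCC (\ref{drccmulti}). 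You instead bypass the uncertainty sets entirely: since the hypothesis already hands you the joint BDRCC (\ref{2.3}), you pass directly to the joint DRCC (\ref{drccmulti}) by conditioning on $\{\pmb{\theta}^c\in\Theta(\mathcal{S}^N,\alpha)\}$. Your argument is shorter and avoids reassembling the joint statement from the individual pieces; the paper's argument, by routing through Theorem~\ref{thm3.1}, makes more explicit why the data-dependent optimization over $(\epsilon_1,\dots,\epsilon_J)$ in (\ref{2.5}) does not introduce in-sample bias. Both are valid and equivalent in substance.
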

\begin{proof}
    For any feasible $\epsilon_j,\ j=1,\cdots,J$ in (\ref{2.5}),  we have from Theorem \ref{thm3.1} that
	\begin{equation}
	\nonumber
	\begin{split}
		1-\alpha=&\mathbb{P}_{\mathcal{S}^{N}}\left(\Xi({\mathcal{S}^{N}},\epsilon,\alpha)\text{ implies the DRCC }(\ref{drcc1}),\ \text{for }\ 0<\epsilon<1\right) \\
		\leq&\mathbb{P}_{\mathcal{S}^{N}}(\Xi({\mathcal{S}^{N}},\epsilon_j,\alpha)\text{ implies the DRCC (\ref{drccmulti}) at level }1-\epsilon_j,\ j=1,\cdots,m).
	\end{split}
	\end{equation}
The second inequality holds by setting $g(\pmb{\xi},\pmb{x})=g_j(\pmb{\xi},\pmb{x})$ for $j=1\cdots,J$, respectively. Then the desired conclusion follows obviously.
\end{proof}

This theorem tells us that although the optimal $\epsilon_j,\ j=1,\cdots,m$ may depend on the sample set $\mathcal{S}^N$, our proposed schema will never result in an in-sample bias which the method in \cite{main} does.

To show the reasonability of the proposed data-driven approach, we investigate the convergence of the uncertainty set $\Xi({\mathcal{S}^{N}},\epsilon,\alpha)$ as the sample size $N$ goes to infinity. Recall that $p(\pmb{\theta})$ denotes the prior pdf. We first make the following assumptions.

\begin{assumption}\label{ass3.1}
    (i) $\ln p(\pmb{\xi}|\pmb{\theta})$, $\pmb{\theta}\in\Theta$, is dominated by an integrable  function with respect to $\mathbb{P}_{\pmb{\theta}^c}$, (ii) there exist constants $c_1 > c_2 > 0 $ such that $c_1 \geq p(\pmb{\theta}) \geq c_2,\ \forall\pmb{\theta}\in\Theta$,  (iii) the parameter space $\Theta$ is a nonempty, compact and convex set.
\end{assumption}
Assumption \ref{ass3.1} (i) is a regular assumption and the rest are necessary for establishing the uniform convergence of the posterior distribution. Our convergence result relys on the following conclusion.
\begin{lemma} (Theorem 3.1 in \cite{BDRO})\label{lem3.1}
    Suppose Assumption \ref{ass3.1} holds. Then for $0 < b < a < \omega<+\infty$, there exists an $N$ large enough  such that
\begin{equation}
	\sup_{\pmb{\theta}\in V_\omega}p(\pmb{\theta}| {\mathcal{S}^{N}} ) \leq \kappa(b)^{-1} e^ {-N(a-b)}\label{2.7}
\end{equation}
holds  w.p.1, where  $V_\omega:= \{\pmb{\theta}\in\Theta : \phi(\pmb{\theta}^c) - \phi(\pmb{\theta}) \geq \omega\}$, $U_\omega:= \Theta \backslash V_\omega =  \{\pmb{\theta}\in\Theta : \phi(\pmb{\theta}^c) - \phi(\pmb{\theta}) < \omega\}$ and $\kappa(b):=\int_{U_b}d\pmb{\theta}$. Here $\phi(\pmb{\theta}) := \mathbb{E}_{\mathbb{P}_{\pmb{\theta}^c}}  \ln p(\pmb{\xi}|\pmb{\theta})$.
\end{lemma}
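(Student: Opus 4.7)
The plan is to work directly from Bayes' rule (\ref{new2.1}) and to control the numerator $p(\mathcal{S}^N|\pmb{\theta})\,p(\pmb{\theta})$ on $V_\omega$ from above and the normalizing denominator $\int_\Theta p(\mathcal{S}^N|\pmb{\theta})\,p(\pmb{\theta})\,d\pmb{\theta}$ from below by restricting the integration to the \emph{good} neighborhood $U_b$ of $\pmb{\theta}^c$. Since $p(\mathcal{S}^N|\pmb{\theta})=\exp\bigl(\sum_{i=1}^N \ln p(\pmb{\xi}^i|\pmb{\theta})\bigr)$ and $\phi(\pmb{\theta})=\mathbb{E}_{\mathbb{P}_{\pmb{\theta}^c}}\ln p(\pmb{\xi}|\pmb{\theta})$ is maximized at $\pmb{\theta}=\pmb{\theta}^c$, the whole argument rests on replacing $\frac{1}{N}\sum_i \ln p(\pmb{\xi}^i|\pmb{\theta})$ by $\phi(\pmb{\theta})$ uniformly on $\Theta$.

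First I would invoke a uniform strong law of large numbers for the log-likelihood: under Assumption \ref{ass3.1}(i) (integrable envelope) and (iii) (compactness of $\Theta$), a classical argument yields
\[
\sup_{\pmb{\theta}\in\Theta}\Bigl|\tfrac{1}{N}\textstyle\sum_{i=1}^N \ln p(\pmb{\xi}^i|\pmb{\theta})-\phi(\pmb{\theta})\Bigr|\to 0 \quad \text{w.p.1}.
\]
Fix $0<\delta<\tfrac{a-b}{2}$; then for all $N$ sufficiently large the above deviation is at most $\delta$ w.p.1. For any $\pmb{\theta}\in V_\omega$ one has $\phi(\pmb{\theta})\le\phi(\pmb{\theta}^c)-\omega\le\phi(\pmb{\theta}^c)-a$, which together with $p(\pmb{\theta})\le c_1$ from Assumption \ref{ass3.1}(ii) gives
\[
\sup_{\pmb{\theta}\in V_\omega} p(\mathcal{S}^N|\pmb{\theta})\,p(\pmb{\theta}) \le c_1 \exp\bigl(N(\phi(\pmb{\theta}^c)-a+\delta)\bigr).
\]
Restricting the denominator to $U_b$, on which $\phi(\pmb{\theta})>\phi(\pmb{\theta}^c)-b$ and $p(\pmb{\theta})\ge c_2$, yields
\[
\int_\Theta p(\mathcal{S}^N|\pmb{\theta})\,p(\pmb{\theta})\,d\pmb{\theta} \ge c_2\,\kappa(b)\,\exp\bigl(N(\phi(\pmb{\theta}^c)-b-\delta)\bigr).
\]

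Dividing the two estimates cancels the common $e^{N\phi(\pmb{\theta}^c)}$ factor and delivers
\[
\sup_{\pmb{\theta}\in V_\omega} p(\pmb{\theta}|\mathcal{S}^N) \le \frac{c_1}{c_2}\,\kappa(b)^{-1}\, e^{-N(a-b-2\delta)},
\]
which matches the asserted bound once the multiplicative constant $c_1/c_2$ and the slack $2\delta$ are absorbed by slightly strengthening the gap between $a$ and $b$ at the outset (equivalently, passing from $(a,b)$ to $(a-\delta,b+\delta)$, both still in $(0,\omega)$). The main technical obstacle is the uniform strong law on $\Theta$: proving it rigorously requires joint measurability and continuity of $\ln p(\pmb{\xi}|\cdot)$ combined with the integrable envelope of Assumption \ref{ass3.1}(i) and the compactness of Assumption \ref{ass3.1}(iii) to upgrade the pointwise LLN to uniform convergence. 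Once that is in hand, the numerator/denominator computation becomes a direct book-keeping exercise enabled by the uniform prior bounds in Assumption \ref{ass3.1}(ii).
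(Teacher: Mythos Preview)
The paper does not supply its own proof of this lemma; it is quoted verbatim as Theorem~3.1 of \cite{BDRO} and used as a black box in the subsequent convergence analysis. Your proposal---invoke a uniform strong law for $\tfrac{1}{N}\sum_i\ln p(\pmb{\xi}^i|\pmb{\theta})$ over the compact $\Theta$, then bound the Bayes numerator on $V_\omega$ from above and the normalizer from below by restricting to $U_b$---is precisely the standard route to such posterior concentration inequalities, and is how the cited result is obtained.

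One small caveat on the final cleanup: your suggestion to ``pass from $(a,b)$ to $(a-\delta,b+\delta)$'' runs in the wrong direction, since that only \emph{shrinks} the gap in the exponent. The correct absorption is to fix the target $(a,b)$ with $0<b<a<\omega$, pick an auxiliary $a'\in(a,\omega)$, and choose $\delta<(a'-a)/2$; your computation then yields $\tfrac{c_1}{c_2}\,\kappa(b)^{-1}e^{-N(a'-b-2\delta)}$, and since $a'-b-2\delta>a-b$ the prefactor $\tfrac{c_1}{c_2}$ is dominated by $e^{-N(a'-a-2\delta)}$ for all $N$ large enough, delivering the stated bound $\kappa(b)^{-1}e^{-N(a-b)}$. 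Apart from this bookkeeping detail, the argument is sound.
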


Obviously, we can see that $U_\omega$ is a neighborhood of $\pmb{\theta}^c$ and Assumption \ref{ass3.1} (iii) ensures that $\int_{U_\omega}d\pmb{\theta}>0$ for any $\omega > 0$. Lemma \ref{lem3.1} proposes that the posterior pdf $p(\pmb{\theta}| {\mathcal{S}^{N}} )$ uniformly converges to the Dirac function $\delta(\pmb{\theta}^c)$ w.p.1, no matter what the prior pdf $p(\pmb{\theta})$ we choose. In the following, w.p.1 means that the considered property holds with probability one w.r.t. the probability distribution for the sequence $\{\pmb{\xi}^1,\pmb{\xi}^2,\cdots\}$ as $\mathbb{P}_{\pmb{\theta}^c}^\infty:=\mathbb{P}_{\pmb{\theta}^c}\times\mathbb{P}_{\pmb{\theta}^c}\cdots$.

\begin{theorem}\label{thm3.4}
    Suppose that Assumption \ref{ass3.1} holds. Then $\mathbb{P}_{\mathcal{S}^{N}}(\Theta({\mathcal{S}^{N}},\alpha)\subset U_\omega)$ converges to one w.p.1.
\end{theorem}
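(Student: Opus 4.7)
The plan is to show that the credible ellipsoid
\[\Theta({\mathcal{S}^{N}},\alpha)=\{\pmb{\theta}\in\Theta:\|[I(\hat{\pmb{\theta}})]^{1/2}(\pmb{\theta}-\hat{\pmb{\theta}})\|_2\leq z_{1-\alpha/2}\}\]
collapses to $\pmb{\theta}^c$ as $N\to\infty$ w.p.1 under $\mathbb{P}_{\pmb{\theta}^c}^\infty$, so that it is eventually absorbed into $U_\omega$. I would first note that under Assumption \ref{ass3.1} the map $\phi(\pmb{\theta})=\mathbb{E}_{\mathbb{P}_{\pmb{\theta}^c}}\log p(\tilde{\pmb{\xi}}|\pmb{\theta})$ is continuous on $\Theta$ and (by the usual KL/identifiability argument) is uniquely maximized at $\pmb{\theta}^c$, so $U_\omega$ is open and contains a Euclidean ball $B(\pmb{\theta}^c,r)$ with $r=r(\omega)>0$. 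The theorem then reduces to the two sub-claims: (a) $\hat{\pmb{\theta}}\to\pmb{\theta}^c$ w.p.1, and (b) the largest semi-axis of the ellipsoid shrinks to zero w.p.1.

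Sub-claim (a) is a short consequence of Lemma \ref{lem3.1}. Since $\int_\Theta p(\pmb{\theta}|\mathcal{S}^N)\,d\pmb{\theta}=1$ on the compact set $\Theta$, one always has $\max_{\pmb{\theta}\in\Theta}p(\pmb{\theta}|\mathcal{S}^N)\geq 1/\mathrm{vol}(\Theta)>0$, while Lemma \ref{lem3.1} provides $\sup_{\pmb{\theta}\in V_\omega}p(\pmb{\theta}|\mathcal{S}^N)\leq \kappa(b)^{-1}e^{-N(a-b)}\to 0$ w.p.1. Hence for $N$ sufficiently large the posterior mode, which realizes the global maximum of $p(\cdot|\mathcal{S}^N)$, cannot lie in $V_\omega$; letting $\omega\downarrow 0$ and invoking continuity of $\phi$ at $\pmb{\theta}^c$ yields $\hat{\pmb{\theta}}\to\pmb{\theta}^c$ w.p.1.

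For sub-claim (b), I would decompose the observed information as
\[I(\hat{\pmb{\theta}})=-\nabla^2\log p(\hat{\pmb{\theta}})-\sum_{i=1}^{N}\nabla^2\log p(\pmb{\xi}^i|\hat{\pmb{\theta}}),\]
in which the prior contribution is $O(1)$. Combining sub-claim (a) with the SLLN (and a standard uniform-integrability bound on the Hessian of $\log p(\pmb{\xi}|\cdot)$), one obtains $N^{-1}I(\hat{\pmb{\theta}})\to I_F(\pmb{\theta}^c)\succ 0$ w.p.1, where $I_F(\pmb{\theta}^c)$ is the Fisher information per observation at the true parameter (positive definiteness here is the same one used in Lemma \ref{lem2.1}). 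Consequently the largest semi-axis $z_{1-\alpha/2}/\sqrt{\lambda_{\min}(I(\hat{\pmb{\theta}}))}$ is $O(N^{-1/2})\to 0$, and together with (a) the entire ellipsoid is eventually contained in $B(\pmb{\theta}^c,r)\subset U_\omega$, which gives the conclusion.

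The main obstacle is sub-claim (b): Assumption \ref{ass3.1} does not explicitly supply the smoothness/integrability needed to differentiate under the integral sign or to apply the SLLN uniformly to the Hessian of $\log p(\pmb{\xi}|\cdot)$. These are however already implicit in the quadratic expansion underlying Lemma \ref{lem2.1} and can be added without changing the scope of the theorem. An alternative route that sidesteps (b) altogether is to treat $\Theta(\mathcal{S}^N,\alpha)$ as a highest-posterior-density region and argue directly from Lemma \ref{lem3.1} together with the prior lower bound $p(\pmb{\theta})\geq c_2$ that its density-level threshold eventually exceeds $\kappa(b)^{-1}e^{-N(a-b)}$, thereby pushing $V_\omega$ out of the credible region.
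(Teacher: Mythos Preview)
Your approach is correct in outline but takes a genuinely different route from the paper. You argue \emph{geometrically}: show $\hat{\pmb{\theta}}\to\pmb{\theta}^c$ and $\lambda_{\min}(I(\hat{\pmb{\theta}}))\to\infty$ so that the ellipsoid collapses into $U_\omega$. The paper instead argues purely in terms of \emph{posterior mass}. It integrates the pointwise bound of Lemma~\ref{lem3.1} over $V_\omega$ to obtain $\mathbb{P}_{\mathcal{S}^N}(\tilde{\pmb{\theta}}\in V_\omega)\leq \kappa(b)^{-1}\nu\,e^{-N(a-b)}$ (with $\nu=\mathrm{vol}(\Theta)$), and then divides by the credible mass $\mathbb{P}_{\mathcal{S}^N}(\tilde{\pmb{\theta}}\in\Theta(\mathcal{S}^N,\alpha))\approx 1-\alpha$ to conclude that $\mathbb{P}_{\mathcal{S}^N}\bigl(\tilde{\pmb{\theta}}\in U_\omega\,\big|\,\tilde{\pmb{\theta}}\in\Theta(\mathcal{S}^N,\alpha)\bigr)\to 1$ w.p.1. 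No property of the ellipsoid beyond its posterior level is used, and no regularity beyond Assumption~\ref{ass3.1} is needed.

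The trade-off is clear. The paper's argument is shorter and avoids exactly the obstacle you flag for sub-claim (b): it never touches the Hessian of the log-likelihood or the Fisher information, so it does not require the additional smoothness/integrability that your SLLN step implicitly assumes. Conversely, your argument, once those extra conditions are granted, delivers the stronger \emph{deterministic} inclusion $\Theta(\mathcal{S}^N,\alpha)\subset U_\omega$ for all large $N$ w.p.1---which is in fact the reading exploited in Remark~\ref{remark3.1} and Theorem~\ref{thm3.5}---whereas the paper's bound strictly speaking only controls the conditional posterior mass and does not by itself rule out a thin sliver of the ellipsoid protruding into $V_\omega$. Your alternative HPD-threshold route is the one closest in spirit to what the paper actually does, though the paper never invokes the HPD interpretation: it simply bounds the numerator $\mathbb{P}_{\mathcal{S}^N}(\tilde{\pmb{\theta}}\in V_\omega)$ and lower-bounds the denominator by the fixed credible level.
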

\begin{proof}
     Let $\tilde{\pmb{\theta}}$ be a random variable with the posterior pdf $p(\pmb{\theta}| {\mathcal{S}^{N}})$. The probability of the event $\{\pmb{\theta}\in V_\omega\}$ is given by the integral $\int_{V_\omega}p(\pmb{\theta}|{\mathcal{S}^{N}})d\pmb{\theta}$. Consequently under Assumption \ref{ass3.1}, we have by (\ref{2.7}) that for any $\omega > 0$ and  for $N$ large enough,
\begin{equation}
		\mathbb{P}_{{\mathcal{S}^{N}}}({\tilde{\pmb{\theta}}\in V_\omega},\tilde{\pmb{\theta}}\in\Theta({\mathcal{S}^{N}},\alpha)) \leq\mathbb{P}_{{\mathcal{S}^{N}}}({\tilde{\pmb{\theta}}\in V_\omega}) \leq  \kappa(b)^{-1}\nu e^ {-N(a-b)} \label{2.8}
\end{equation}
holds w.p.1 and 
\begin{equation}
	\mathbb{P}_{\mathcal{S}^{N}}(\tilde{\pmb{\theta}}\in U_\omega|\tilde{\pmb{\theta}}\in\Theta({\mathcal{S}^{N}},\alpha))=\frac{\mathbb{P}_{\mathcal{S}^{N}}({\tilde{\pmb{\theta}}\in U_\omega},\tilde{\pmb{\theta}}\in\Theta({\mathcal{S}^{N}},\alpha))}{\mathbb{P}_{\mathcal{S}^{N}}(\tilde{\pmb{\theta}}\in\Theta({\mathcal{S}^{N}},\alpha))}\geq1-\frac{\kappa(b)^{-1}\nu e^{-N(a-b)}}{\mathbb{P}_{\mathcal{S}^{N}}(\tilde{\pmb{\theta}}\in\Theta({\mathcal{S}^{N}},\alpha))},\label{2.9}
\end{equation}
where $\nu=\int_{\Theta}d\pmb{\theta}$ is the volume of $\Theta$. It follows that for any $\pmb{\theta}\in\Theta({\mathcal{S}^{N}},\alpha)$, the probability of the event $\{\tilde{\pmb{\theta}}\in U_\omega\}$ converges to one (exponentially fast) w.p.1 as $N \to\infty$.
\end{proof}

\begin{remark}\label{remark3.1}
    Under Assumption \ref{ass3.1}, the function $\phi: \Theta\to\mathbb{R}$ is real-valued. Moreover, we have that for $\pmb{\theta}\in\Theta$,
\[\lim_{\pmb{\theta}'\to\pmb{\theta}}\phi(\pmb{\theta}')=\lim_{\pmb{\theta}'\to\theta}\int\ln p(\pmb{\xi}|\pmb{\theta}')\mathbb{P}_{\pmb{\theta}^c}(d\xi)=\int\lim_{\pmb{\theta}'\to\pmb{\theta}}\ln p(\pmb{\xi}|\pmb{\theta}')\mathbb{P}_{\pmb{\theta}^c}(d\xi)=\phi(\pmb{\theta}),\]
where we use the continuity of $p(\pmb{\xi}|\pmb{\theta})$ in $\pmb{\theta}$, and the interchangeability of the limit and the integral follows by the
dominated convergence theorem since $\ln p(\cdot|\pmb{\theta})$ is dominated by an integrable function. And $\phi(\pmb{\theta})$ is continuous on $\Theta$. Thus we have that for an appropriate $\omega > 0$, the set $\Theta({\mathcal{S}^{N}},\alpha)$ is bounded by $U _\omega = \Theta\backslash V_\omega$ which is an arbitrarily tight neighborhood of $\pmb{\theta}^c$ w.p.1. Therefore, (\ref{2.8}) implies that w.p.1, the distance from any $\pmb{\theta}\in\Theta({\mathcal{S}^{N}},\alpha)$  to $\pmb{\theta}^c$ converges to zero w.r.t. $\mathbb{P}_{\mathcal{S}^N}$. 
\end{remark}

We define the true uncertainty set $\Xi^c$ as the one which satisfies the true chance constraint implication (\ref{1.3}) by setting the uncertainty set of parameters to $\Theta({\mathcal{S}^{N}},\alpha)=\{\pmb{\theta}^c\}$. According to Theorem \ref{thm2.1} and our schema, $\Xi^c$ is a closed and convex set such that $\delta^*(\pmb{v}|\Xi^c)=\pmb{v}^T\pmb{B}(\pmb{\theta}^c,\epsilon)$.

\begin{theorem}\label{thm3.5}
    Suppose that the upper bound $\pmb{B}(\pmb{\theta},\epsilon)$ is continuous in terms of $\pmb{\theta}\in\Theta$, then  under Assumption \ref{ass3.1}, the probability of the event that the uncertainty set $\Xi({\mathcal{S}^{N}},\epsilon,\alpha))$ converges to the true set $\Xi^c$ tends to one with respect to $\mathbb{P}_{\mathcal{S}^{N}}$, written as $\lim_{N\to\infty}\mathbb{P}_{\mathcal{S}^{N}}(\Xi^c = \lim \Xi({\mathcal{S}^{N}},\epsilon,\alpha)))=1$, w.p.1.
\end{theorem}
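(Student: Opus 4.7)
The plan is to establish the set convergence through convergence of support functions. Both $\Xi(\mathcal{S}^N,\epsilon,\alpha)$ and $\Xi^c$ are compact convex, with support functions $\sup_{\pmb{\theta}\in\Theta(\mathcal{S}^N,\alpha)}\pmb{v}^T\pmb{B}(\pmb{\theta},\epsilon)$ and $\pmb{v}^T\pmb{B}(\pmb{\theta}^c,\epsilon)$ respectively, and for compact convex sets in $\mathbb{R}^d$ the uniform convergence of support functions on the unit sphere is equivalent to Hausdorff convergence; this is what is meant by $\lim\Xi(\mathcal{S}^N,\epsilon,\alpha)=\Xi^c$ inside the probability statement.

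First I would show that the parameter credible region $\Theta(\mathcal{S}^N,\alpha)$ collapses onto $\{\pmb{\theta}^c\}$. By Gibbs's inequality, $\phi(\pmb{\theta}^c)-\phi(\pmb{\theta})=\mathrm{KL}(\mathbb{P}_{\pmb{\theta}^c}\,\|\,\mathbb{P}_{\pmb{\theta}})\geq 0$ with equality iff $\mathbb{P}_{\pmb{\theta}}=\mathbb{P}_{\pmb{\theta}^c}$, so under the identifiability implicit in Assumption~\ref{ass2.1} and the continuity of $\phi$ established in Remark~\ref{remark3.1}, $\pmb{\theta}^c$ is the unique maximizer of $\phi$ on the compact set $\Theta$. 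Consequently, for every $\delta>0$ there exists $\omega>0$ with $U_\omega\subseteq B(\pmb{\theta}^c,\delta)$, and Theorem~\ref{thm3.4} then yields $\Theta(\mathcal{S}^N,\alpha)\subseteq B(\pmb{\theta}^c,\delta)$ with $\mathbb{P}_{\mathcal{S}^N}$-probability tending to one, w.p.1 over the sample sequence.

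Next I would transfer this to the uncertainty set via the continuous map $\pmb{\theta}\mapsto\pmb{B}(\pmb{\theta},\epsilon)$. Being continuous on the compact $\Theta$, this map is uniformly continuous, so for any $\eta>0$ one can choose $\delta>0$ so that $\|\pmb{\theta}-\pmb{\theta}^c\|\leq\delta$ forces $\|\pmb{B}(\pmb{\theta},\epsilon)-\pmb{B}(\pmb{\theta}^c,\epsilon)\|\leq\eta$. On the event $\Theta(\mathcal{S}^N,\alpha)\subseteq B(\pmb{\theta}^c,\delta)$, picking any $\pmb{\theta}^\dagger\in\Theta(\mathcal{S}^N,\alpha)$ to supply a matching lower bound gives
\[
\Bigl|\sup_{\pmb{\theta}\in\Theta(\mathcal{S}^N,\alpha)}\pmb{v}^T\pmb{B}(\pmb{\theta},\epsilon)-\pmb{v}^T\pmb{B}(\pmb{\theta}^c,\epsilon)\Bigr|\leq\eta\|\pmb{v}\|
\]
uniformly in $\pmb{v}$. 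Support functions therefore converge uniformly on the unit sphere on this event, yielding Hausdorff convergence $\Xi(\mathcal{S}^N,\epsilon,\alpha)\to\Xi^c$; since the $\mathbb{P}_{\mathcal{S}^N}$-probability of this event tends to one w.p.1, the theorem follows by letting $\eta\downarrow 0$ along a countable sequence.

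The main obstacle is the careful bookkeeping of two distinct probability layers: the posterior $\mathbb{P}_{\mathcal{S}^N}$ over $\tilde{\pmb{\theta}}$, inside which the convergence event is measured, and the sampling measure $\mathbb{P}_{\pmb{\theta}^c}^\infty$ governing $\mathcal{S}^N$, under which the w.p.1 statements live. In particular, Theorem~\ref{thm3.4} needs to be promoted from a pointwise ($\omega$-by-$\omega$) statement to one uniform enough to drive the diameter of $\Theta(\mathcal{S}^N,\alpha)$ to zero, which requires exhausting $\delta\downarrow 0$ along a countable sequence and taking a countable intersection of full-probability events. A secondary subtlety is that the identifiability collapsing $U_\omega$ to $\{\pmb{\theta}^c\}$ as $\omega\downarrow 0$ is only implicit in Assumption~\ref{ass2.1} and should be invoked explicitly; without it the limit would only be the $\pmb{B}$-image of the level set $\{\pmb{\theta}:\phi(\pmb{\theta})=\phi(\pmb{\theta}^c)\}$ rather than $\Xi^c$ itself.
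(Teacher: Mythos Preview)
Your proposal is correct and follows essentially the same skeleton as the paper: collapse $\Theta(\mathcal{S}^N,\alpha)$ onto $\{\pmb{\theta}^c\}$ via Theorem~\ref{thm3.4} and Remark~\ref{remark3.1}, push this through the continuous map $\pmb{\theta}\mapsto\pmb{B}(\pmb{\theta},\epsilon)$, and convert support-function convergence into set convergence. The execution of the last step differs slightly. The paper argues pointwise convergence of $\delta^*(\pmb{v}\mid\Xi(\mathcal{S}^N,\epsilon,\alpha))$ to $\delta^*(\pmb{v}\mid\Xi^c)$ and then invokes an external result (Theorem~11 in \cite{refcon}) stating that, under equi-lower semicontinuity of the support functions (which holds here by compactness of $\Theta$), pointwise convergence of support functions is equivalent to set convergence. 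You instead obtain a \emph{uniform} bound $\bigl|\delta^*(\pmb{v}\mid\Xi(\mathcal{S}^N,\epsilon,\alpha))-\delta^*(\pmb{v}\mid\Xi^c)\bigr|\le\eta\|\pmb{v}\|$ directly from uniform continuity of $\pmb{B}(\cdot,\epsilon)$ on the compact $\Theta$, which gives Hausdorff convergence without any external citation. Your route is thus more self-contained; the paper's is shorter by outsourcing the equivalence. Your explicit flagging of identifiability (needed so that $U_\omega\downarrow\{\pmb{\theta}^c\}$) and of the two probability layers are genuine subtleties the paper leaves implicit, and your treatment of them is appropriate.
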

\begin{proof}
With the continuity of the upper bound $\pmb{B}(\pmb{\theta},\epsilon)$ and the compactness of $\Theta({\mathcal{S}^{N}},\alpha)$, we have that for any $\pmb{\theta}\in\Theta({\mathcal{S}^{N}},\alpha)$, the distance  from $\pmb{B}(\pmb{\theta},\epsilon)$ to $\pmb{B}(\pmb{\theta}^c,\epsilon)$ converges to zero  w.p.1 with respect to $\mathbb{P}_{\mathcal{S}^N}$, which means that
$\lim_{N\to\infty}\mathbb{P}_{\mathcal{S}^{N}}(\delta^*(\pmb{v}|\Xi({\mathcal{S}^{N}},\epsilon,\alpha)\text{ converges to }\delta^*(\pmb{v}|\Xi^c)))=1$.

It is known from Theorem 11 in \cite{refcon} that, if the support functions $\{\delta^*(\pmb{v}|\Xi({\mathcal{S}^{N}},\epsilon,\alpha))\}$ and $\delta^*(\pmb{v}|\Xi^c)$ are equi-lower semicontinuous, then 
\[\lim_{N\to\infty}\delta^*(\pmb{v}|\Xi({\mathcal{S}^{N}},\epsilon,\alpha))=\delta^*(\pmb{v}|\Xi^c)\text{ if and only if } \Xi^c=\lim\Xi({\mathcal{S}^{N}},\epsilon,\alpha).\]
Fortunately, it is easy to prove that the support functions $\{\delta^*(\pmb{v}|\Xi({\mathcal{S}^{N}},\epsilon,\alpha))\}$ and $\delta^*(\pmb{v}|\Xi^c)$ are equi-continuous due to the compactness of $\Theta$. Thus $\lim_{N\to\infty}\mathbb{P}_{\mathcal{S}^{N}}(\Xi^c=\lim\Xi({\mathcal{S}^{N}},\epsilon,\alpha))=1.$
\end{proof}

With the above theoretical foundation, we can now consider the application of our schema Algorithm 1 under different circumstances.	
\section{BDRCC Approximation with discrete distributions}
In this section, we consider the case of  a discrete distribution  $\mathbb{P}_{\pmb{\theta}^c}$ with finite support, i.e., supp$(\mathbb{P}_{\pmb{\theta}^c})=\{r_1,\cdots,r_{n} \}$. Under this situation, it is natural to set $\mathbb{P}_{\pmb{\theta}^c}:=\pmb{\theta}^c$ with the component being $\pmb{\theta}^c_i=\mathbb{P}_{\pmb{\theta}^c}(\tilde{\pmb{\xi}}=r_i)$ for $i =1,\cdots,n$. Let $\Delta_n\equiv\{\pmb{\theta}\in\mathbb{R}^n_+:\pmb{e}^T\pmb{\theta}=1\}$ denote the probability simplex.
	
To start with, we introduce a well-known property of the Conditional Value at Risk (CVaR) (\cite{ref1,ref43}) which is defined as
\[\text{CVaR}_\epsilon^{\mathbb{P}}(\pmb{v}^T\tilde{\pmb{\xi}})\equiv\min_t\left\{t+\frac{1}{\epsilon}\mathbb{E}^{\mathbb{P}}\left[\left(\tilde{\pmb{\xi}}^Tv-t\right)^+\right]\right\},\]
with respect to a fixed measure $ \mathbb{P}$ and vector $\pmb{v}\in\mathbb{R}^d$.
\begin{lemma} (Theorem 2 in \cite{ref43})\label{lem4.1}
    Suppose supp$(\mathbb{P}_{\pmb{\theta}} )=\{r_1,\cdots,r_{n} \}$ and  $\pmb{\theta}_j=\mathbb{P}_{\pmb{\theta}}(\tilde{\pmb{\xi}}=r_j),\ j=1,\cdots,n$. Let
	\begin{equation}
		\Xi^{\text{CVaR}^{\mathbb{P}_{\pmb{\theta}}}_\epsilon}=\left\{\pmb{\xi}\in\mathbb{R}^d:\pmb{\xi}=\sum_{j=1}^{n}q_jr_j,q\in\Delta_n,q\leq\frac{1}{\epsilon}\pmb{\theta} \right\}.\label{3.1}
	\end{equation}
	Then, $\delta^*(\pmb{v}|\Xi^{\text{CVaR}^{\mathbb{P}_{\pmb{\theta}}}_\epsilon})=\text{CVaR}^{\mathbb{P}_{\pmb{\theta}}}(\pmb{v}^T\tilde{\pmb{\xi}})$.
\end{lemma}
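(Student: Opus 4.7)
The plan is to recognize the support function as a linear program (LP) over the polytope that parameterizes $\Xi^{\text{CVaR}^{\mathbb{P}_{\pmb{\theta}}}_\epsilon}$, and then identify its LP dual as the optimization representation of CVaR stated in the definition just before Lemma \ref{lem4.1}.

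First, I would substitute the parameterization $\pmb{\xi}=\sum_{j=1}^{n}q_j r_j$ directly into the definition of the support function, obtaining
\[
\delta^*(\pmb{v}\mid\Xi^{\text{CVaR}^{\mathbb{P}_{\pmb{\theta}}}_\epsilon})=\max_{q}\left\{\sum_{j=1}^{n}q_j\,\pmb{v}^T r_j \;:\; \sum_{j=1}^{n}q_j=1,\; 0\leq q_j\leq \tfrac{1}{\epsilon}\pmb{\theta}_j\right\}.
\]
Writing $a_j:=\pmb{v}^T r_j$, this is a bounded feasible LP (take $q=\pmb{\theta}$ to see feasibility, and boundedness follows from $q\in\Delta_n$), so strong LP duality applies.

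Next, I would form the Lagrangian with multiplier $t\in\mathbb{R}$ for the equality $\pmb{e}^T q=1$ and $\mu_j\geq 0$ for $q_j\leq \pmb{\theta}_j/\epsilon$, while keeping $q_j\geq 0$ as an explicit constraint. Maximizing the Lagrangian over $q_j\geq 0$ forces $a_j-t-\mu_j\leq 0$, i.e. $\mu_j\geq a_j-t$, and since we also need $\mu_j\geq 0$ the cost-minimizing choice is $\mu_j=(a_j-t)^+$. Substituting back yields the dual
\[
\min_{t}\Bigl\{\,t+\tfrac{1}{\epsilon}\sum_{j=1}^{n}\pmb{\theta}_j\,(a_j-t)^+\Bigr\}=\min_{t}\Bigl\{\,t+\tfrac{1}{\epsilon}\mathbb{E}^{\mathbb{P}_{\pmb{\theta}}}\bigl[(\pmb{v}^T\tilde{\pmb{\xi}}-t)^+\bigr]\Bigr\},
\]
which is exactly $\text{CVaR}^{\mathbb{P}_{\pmb{\theta}}}_\epsilon(\pmb{v}^T\tilde{\pmb{\xi}})$ by the definition recalled in the paragraph preceding the lemma. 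By strong duality the primal optimum equals this dual optimum, giving the claimed identity.

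Since both the primal LP is a standard finite LP and the dual identification is algebraic, I do not anticipate a serious obstacle. The only minor care needed is verifying that strong duality applies here (which follows because the primal is feasible and bounded, or equivalently because the feasible set is a nonempty bounded polytope and the objective is linear) and being careful that the sign of the multiplier $t$ on the equality is unconstrained, which is what produces the unconstrained inner minimization in the CVaR representation. An alternative, essentially equivalent, route is to appeal directly to the well-known dual representation $\text{CVaR}_\epsilon(X)=\sup\{\mathbb{E}_Q[X]:Q\ll\mathbb{P},\;dQ/d\mathbb{P}\leq 1/\epsilon\}$; under the discrete support assumption this reads $q_j\leq \pmb{\theta}_j/\epsilon$ with $q\in\Delta_n$, which is precisely the constraint set defining $\Xi^{\text{CVaR}^{\mathbb{P}_{\pmb{\theta}}}_\epsilon}$.
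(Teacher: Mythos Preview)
Your argument is correct: the support function computation reduces to a finite LP over the polytope $\{q\in\Delta_n: q\le \pmb{\theta}/\epsilon\}$, and LP strong duality yields precisely the Rockafellar--Uryasev minimization formula for $\text{CVaR}_\epsilon^{\mathbb{P}_{\pmb{\theta}}}(\pmb{v}^T\tilde{\pmb{\xi}})$. The feasibility and boundedness checks you note are exactly what is needed, and the alternative route via the density-ratio dual representation of CVaR is also valid and arguably more direct.

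There is nothing to compare against, however: the paper does not supply its own proof of this lemma. It is stated as a quotation of Theorem~2 in \cite{ref43} and used as a black box in the proof of Theorem~\ref{thm4.1}. So your proposal is not so much a different route as a self-contained justification of a result the paper simply imports.
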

Based on Lemma \ref{lem4.1}, the worst-case CVaR over the Bayesian credible interval configured as
	\[\Theta({\mathcal{S}^{N}},\alpha)=\{\pmb{\theta}\in\Delta_n:||[I(\hat{\pmb{\theta}})]^{1/2}  (\pmb{\theta} - \hat{\pmb{\theta}})||_2\leq z_{1-\frac{\alpha}{2}}\}\]
	will lead to an uncertainty set as the following theorem shows.	
\begin{theorem}\label{thm4.1}
     Suppose that $\mathbb{P}_{\pmb{\theta}^c}$ has a known, finite support $\{r_1,\cdots,r_{n} \}$. The following uncertainty set $\Xi({\mathcal{S}^{N}},\epsilon,\alpha)$ satisfies the BDRCC (\ref{BDRCC1}) for all $0<\epsilon<1$:
	\[\Xi({\mathcal{S}^{N}},\epsilon,\alpha)=\left\{\pmb{\xi}\in\mathbb{R}^d:\pmb{\xi}=\sum_{j=1}^{n}q_jr_j,q\in\Delta_n,q\leq\frac{1}{\epsilon}\pmb{\theta} ,\pmb{\theta}\in\Theta({\mathcal{S}^{N}},\alpha)\right\}.\]
	The corresponding support function is given by
			\begin{equation*}
		\begin{split}
	\delta^*(\pmb{v}|\Xi({\mathcal{S}^{N}},\epsilon,\alpha))=&\min_{\beta,w,\eta,\lambda}\beta+\frac{1}{\epsilon}\left(\hat{\pmb{\theta}}^Tw+d^T\eta+z_{1-\frac{\alpha}{2}}^2\lambda-\sum_{j=1}^n\frac{(b_j^Tw-c_j^T\eta)^2}{4\lambda}\right)	\\
	&\text{s.t. }w,\lambda,\eta\geq0,\ r_j^T\pmb{v}-w_j\leq\beta,\ j=1,\cdots,n 	.
		\end{split}
	\end{equation*}
where  $B=[I(\hat{\pmb{\theta}})]^{-1/2}$, $C=\left[\pmb{e}^T[I(\hat{\pmb{\theta}})]^{-1/2},\ 	-\pmb{e}^T[I(\hat{\pmb{\theta}})]^{-1/2},\ 
-[I(\hat{\pmb{\theta}})]^{-1/2}\right]^T$
 and $d=\left[	1-\pmb{e}^T\hat{\pmb{\theta}},\ 
 -1+\pmb{e}^T\hat{\pmb{\theta}},\ 
 -\hat{\pmb{\theta}}\right]^T$.
\end{theorem}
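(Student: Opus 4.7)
The plan is to apply Theorem~\ref{thm3.1} to transfer the probabilistic guarantee to the explicit set $\Xi({\mathcal{S}^{N}},\epsilon,\alpha)$. This requires checking that (a) $\Xi({\mathcal{S}^{N}},\epsilon,\alpha)$ is non-empty, closed, convex, and compact, and (b) its support function dominates $\sup_{\pmb{\theta} \in \Theta({\mathcal{S}^{N}},\alpha)} \text{VaR}_\epsilon^{\mathbb{P}_{\pmb{\theta}}}(\pmb{v}^T \tilde{\pmb{\xi}})$ for every $\pmb{v}\in\mathbb{R}^d$. Once (a) and (b) are secured, the closed-form formula for $\delta^*$ follows by two successive dualizations: an inner LP duality for the CVaR representation, and an outer conic (SOCP) duality for the maximization over the Bayesian credible ellipsoid.

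For (a), $\Theta({\mathcal{S}^{N}},\alpha)$ is the intersection of an ellipsoid with the probability simplex $\Delta_n$, hence compact and convex. The joint feasible set $\{(\pmb{\theta}, q) : \pmb{\theta} \in \Theta({\mathcal{S}^{N}},\alpha),\, q \in \Delta_n,\, q \leq \pmb{\theta}/\epsilon\}$ is then compact and convex, and $\Xi({\mathcal{S}^{N}},\epsilon,\alpha)$ is its image under the continuous linear map $(\pmb{\theta}, q) \mapsto \sum_j q_j r_j$, which preserves both properties. For (b), Lemma~\ref{lem4.1} gives $\delta^*(\pmb{v}|\Xi^{\text{CVaR}^{\mathbb{P}_{\pmb{\theta}}}_\epsilon}) = \text{CVaR}_\epsilon^{\mathbb{P}_{\pmb{\theta}}}(\pmb{v}^T \tilde{\pmb{\xi}})$ for each fixed $\pmb{\theta}$, while by definition $\Xi({\mathcal{S}^{N}},\epsilon,\alpha) = \cup_{\pmb{\theta} \in \Theta({\mathcal{S}^{N}},\alpha)} \Xi^{\text{CVaR}^{\mathbb{P}_{\pmb{\theta}}}_\epsilon}$; taking suprema yields $\delta^*(\pmb{v}|\Xi({\mathcal{S}^{N}},\epsilon,\alpha)) = \sup_{\pmb{\theta}} \text{CVaR}_\epsilon^{\mathbb{P}_{\pmb{\theta}}}(\pmb{v}^T \tilde{\pmb{\xi}}) \geq \sup_{\pmb{\theta}} \text{VaR}_\epsilon^{\mathbb{P}_{\pmb{\theta}}}(\pmb{v}^T \tilde{\pmb{\xi}})$, the last inequality being the textbook bound $\text{VaR} \leq \text{CVaR}$. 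Theorem~\ref{thm3.1} then delivers the BDRCC implication.

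To obtain the closed form I first dualize the LP $\text{CVaR}_\epsilon^{\mathbb{P}_{\pmb{\theta}}}(\pmb{v}^T \tilde{\pmb{\xi}}) = \max_q\{\sum_j q_j r_j^T \pmb{v} : q \in \Delta_n,\, q \leq \pmb{\theta}/\epsilon\}$: attaching a scalar $\beta$ to $\pmb{e}^T q = 1$ and non-negative multipliers $w_j/\epsilon$ to $q_j \leq \pmb{\theta}_j/\epsilon$ yields the dual $\min_{\beta,\,w \geq 0,\,r_j^T \pmb{v} - w_j \leq \beta}\, \beta + \pmb{\theta}^T w/\epsilon$. Plugging this back into $\sup_{\pmb{\theta}}$ produces a sup-min problem whose order I would exchange via Sion's minimax theorem: the objective is linear in $\pmb{\theta}$ and convex in $(\beta, w)$, and $\Theta({\mathcal{S}^{N}},\alpha)$ is convex and compact. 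What remains is $\min_{\beta, w} \{\beta + (1/\epsilon) \sup_{\pmb{\theta} \in \Theta({\mathcal{S}^{N}},\alpha)} \pmb{\theta}^T w\}$ under the same linear constraints on $(\beta, w)$.

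The final step is to evaluate $\sup_{\pmb{\theta}} \pmb{\theta}^T w$ in closed form. The reparametrization $u = [I(\hat{\pmb{\theta}})]^{1/2}(\pmb{\theta} - \hat{\pmb{\theta}})$, equivalently $\pmb{\theta} = \hat{\pmb{\theta}} + Bu$ with $B = [I(\hat{\pmb{\theta}})]^{-1/2}$, rewrites the constraints as a Euclidean ball $\|u\|_2 \leq z_{1-\alpha/2}$ together with a polyhedron $Cu \leq d$ combining the equality $\pmb{e}^T \pmb{\theta} = 1$ (split into two inequalities) and the non-negativity $\pmb{\theta} \geq 0$; the matrices $B$, $C$, $d$ that emerge are exactly those listed in the statement. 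Assigning multipliers $\eta \geq 0$ to $Cu \leq d$ and $\lambda \geq 0$ to the quadratic constraint, the stationarity condition gives $u^* = (Bw - C^T \eta)/(2\lambda)$, and SOCP strong duality (which holds since $u = 0$ lies in the interior of the ball, so Slater's condition is met) produces a closed-form quadratic-over-$\lambda$ dual objective that, once merged with the outer minimization in $(\beta, w)$, matches the expression in the statement. The main obstacles I anticipate are justifying the Sion swap and the conic strong duality cleanly (including handling $\lambda \downarrow 0$ as a limit), and carefully tracking the row/column conventions for $B$ and $C$ so that $\|Bw - C^T \eta\|^2$ correctly expands as $\sum_{j=1}^n (b_j^T w - c_j^T \eta)^2$.
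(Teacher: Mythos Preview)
Your proposal is correct and follows essentially the same route as the paper: bound $\mathrm{VaR}$ by $\mathrm{CVaR}$ via Lemma~\ref{lem4.1}, dualize the coupling constraint $q\le \pmb{\theta}/\epsilon$ to split the support function into $\max_{q\in\Delta_n}\sum_j q_j(r_j^T\pmb{v}-w_j)+\tfrac{1}{\epsilon}\max_{\pmb{\theta}\in\Theta}w^T\pmb{\theta}$, then reparametrize $\pmb{\theta}=\hat{\pmb{\theta}}+B\zeta$ and take the Lagrangian dual of the ball-plus-polyhedron problem. Your added care about compactness/convexity, the Sion swap, Slater's condition, and the $\lambda\downarrow 0$ boundary is more explicit than the paper's ``by Lagrangian duality'' justification, but the architecture is the same.
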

\begin{proof}
    With Lemma \ref{lem4.1},	an upper bound to $\sup_{\pmb{\theta}\in\Theta({\mathcal{S}^{N}},\alpha)}\text{VaR}_\epsilon^{{\mathbb{P}_{\pmb{\theta}}}}(\pmb{v}^T\tilde{\pmb{\xi}})$ can be derived as 
		\begin{equation*}
		\begin{split}
			\sup_{\pmb{\theta}\in\Theta({\mathcal{S}^{N}},\alpha)}\text{VaR}_\epsilon^{{\mathbb{P}_{\pmb{\theta}}}}(\pmb{v}^T\tilde{\pmb{\xi}})\leq&\sup_{\pmb{\theta}\in\Theta({\mathcal{S}^{N}},\alpha)}\text{CVaR}_\epsilon^{{\mathbb{P}_{\pmb{\theta}}}}(\pmb{v}^T\tilde{\pmb{\xi}})\\
			=&\sup_{\pmb{\theta}\in\Theta({\mathcal{S}^{N}},\alpha)}\max_{\pmb{\xi}\in\Xi^{\text{CVaR}^\mathbb{P}_\epsilon}}\pmb{v}^T\pmb{\xi}	\\	
			=&\max_{\pmb{\xi}\in\Xi({\mathcal{S}^{N}},\epsilon,\alpha)}\pmb{v}^T\pmb{\xi}\\
			=&\delta^*(\pmb{v}|\Xi({\mathcal{S}^{N}},\epsilon,\alpha)).
		\end{split}
	\end{equation*}
Here $\Xi({\mathcal{S}^{N}},\epsilon,\alpha)$ is defined as $\{\pmb{\xi}\in\Xi^{\text{CVaR}_\epsilon^{{\mathbb{P}_{\pmb{\theta}}}}},\ \pmb{\theta}\in\Theta({\mathcal{S}^{N}},\alpha)\}$.

As for $\delta^*(\pmb{v}|\Xi({\mathcal{S}^{N}},\epsilon,\alpha))$, we have
	\[\delta^*(\pmb{v}|\Xi({\mathcal{S}^{N}},\epsilon,\alpha))=\inf_{w\geq0}\left\{\max_{q\in\Delta_n}\sum_{j=1}^nq_j(r_j^T\pmb{v}-w_j)+\frac{1}{\epsilon}\max_{\pmb{\theta}\in\Theta({\mathcal{S}^{N}},\alpha)}w^T\pmb{\theta}\right\}\]
according to the Lagrangian duality theory. The optimal value of the first maximization within the brace has an analytic expression $\beta=\max_j\{r_j^T\pmb{v}-w_j\}$. We now concentrate on the second maximization within the brace. Denote $\zeta=[I(\hat{\pmb{\theta}})]^{1/2}  (\pmb{\theta} - \hat{\pmb{\theta}})$, thus $\pmb{\theta}=[I(\hat{\pmb{\theta}})]^{-1/2}\zeta+\hat{\pmb{\theta}}$. The uncertainty set $\Theta({\mathcal{S}^{N}},\alpha)$ can be reformulated as 
	\[\Theta({\mathcal{S}^{N}},\alpha)=\{\zeta:||\zeta||_2\leq z_{1-\frac{\alpha}{2}},[I(\hat{\pmb{\theta}})]^{-1/2}\zeta+\hat{\pmb{\theta}}\in\Delta_n\}\]
	and the second maximization can be rewritten as
	\begin{equation}
		\begin{split}
			\max_{\pmb{\theta}\in\Theta({\mathcal{S}^{N}},\alpha)}w^T\pmb{\theta}=&\max_{\zeta}\left\{w^T\left[[I(\hat{\pmb{\theta}})]^{-1/2}\zeta+\hat{\pmb{\theta}}\right]\bigg|\zeta\in\Theta({\mathcal{S}^{N}},\alpha)\right\}\\
			=&\max_{\zeta}\left\{w^T\left[[I(\hat{\pmb{\theta}})]^{-1/2}\zeta+\hat{\pmb{\theta}}\right]\bigg| ||\zeta||_2\leq z_{1-\frac{\alpha}{2}},[I(\hat{\pmb{\theta}})]^{-1/2}\zeta+\hat{\pmb{\theta}}\in\Delta_n\right\}	\\	
			=&\max_{\zeta}\left\{w^T\left[[I(\hat{\pmb{\theta}})]^{-1/2}\zeta+\hat{\pmb{\theta}}\right]\bigg| \sum_{j=1}^n\zeta_j^2\leq z_{1-\frac{\alpha}{2}}^2,\pmb{e}^T\left[[I(\hat{\pmb{\theta}})]^{-1/2}\zeta+\hat{\pmb{\theta}}\right]=1,[I(\hat{\pmb{\theta}})]^{-1/2}\zeta+\hat{\pmb{\theta}}\geq0\right\}.		
	\end{split}\label{3.2}
	\end{equation}
We denote $B=[I(\hat{\pmb{\theta}})]^{-1/2}$, $C=\left[	\pmb{e}^T[I(\hat{\pmb{\theta}})]^{-1/2},\ 	-\pmb{e}^T[I(\hat{\pmb{\theta}})]^{-1/2},\ 	-[I(\hat{\pmb{\theta}})]^{-1/2}\right]^T$ and $d=\left[	1-\pmb{e}^T\hat{\pmb{\theta}},\ -1+\pmb{e}^T\hat{\pmb{\theta}},\ -\hat{\pmb{\theta}}\right]^T$ for notational convenience.

The Lagrange function for the optimization problem on the right-hand-side of (\ref{3.2}) is given by
	\[L(\zeta,\lambda,\eta)=w^T(\hat{\pmb{\theta}}+B\zeta)+z_{1-\frac{\alpha}{2}}^2\lambda-\lambda\sum_{j=1}^n\zeta_j^2+\eta^T(d-C\zeta)\]
	and the dual objective function is
	\[g(\lambda,\eta)=\max_\zeta L(\zeta,\lambda,\eta).\]
So we have
	\begin{equation*}
		\begin{split}
		g(\lambda,\eta)=&\hat{\pmb{\theta}}^Tw+d^T\eta+z_{1-\frac{\alpha}{2}}^2\lambda+\max_{\zeta}\sum_{j=1}^n\left(\zeta_j(b_j^Tw-c_j^T\eta)-\lambda\zeta_j^2\right)\\
		=&\hat{\pmb{\theta}}^Tw+d^T\eta+z_{1-\frac{\alpha}{2}}^2\lambda+\sum_{j=1}^n\max_{\zeta_j}\left(\zeta_j(b_j^Tw-c_j^T\eta)-\lambda\zeta_j^2\right)\\=&\hat{\pmb{\theta}}^Tw+d^T\eta+z_{1-\frac{\alpha}{2}}^2\lambda+\sum_{j=1}^n\lambda\max_{\zeta_j}\left(\zeta_j(\frac{b_j^Tw-c_j^T\eta}{\lambda})-\zeta_j^2\right)\\	
		=&\hat{\pmb{\theta}}^Tw+d^T\eta+z_{1-\frac{\alpha}{2}}^2\lambda-\sum_{j=1}^n\frac{(b_j^Tw-c_j^T\eta)^2}{4\lambda}.	
		\end{split}
	\end{equation*}
Thus the optimal value of the second maximization problem within the brace of  (\ref{3.2}) is
\[\inf_{\lambda,\eta\geq0}\hat{\pmb{\theta}}^Tw+d^T\eta+z_{1-\frac{\alpha}{2}}^2\lambda-\sum_{j=1}^n\frac{(b_j^Tw-c_j^T\eta)^2}{4\lambda}.	\]
With the above preparations, the support function can be equivalently determined by
			\begin{equation*}
	\begin{split}
	\delta^*(\pmb{v}|\Xi({\mathcal{S}^{N}},\epsilon,\alpha))=&\min_{\beta,w,\eta,\lambda}\beta+\frac{1}{\epsilon}\left(\hat{\pmb{\theta}}^Tw+d^T\eta+z_{1-\frac{\alpha}{2}}^2\lambda-\sum_{j=1}^n\frac{(b_j^Tw-c_j^T\eta)^2}{4\lambda}\right)	\\
		&\text{s.t.}\ w,\lambda,\eta\geq0,\ r_j^T\pmb{v}-w_j\leq\beta,\ j=1,\cdots,n.	.
	\end{split}
\end{equation*}
\end{proof}

\begin{example}\label{example4.1}
We adopt a Dirichlet prior for $\tilde{\pmb{\theta}}$. Recall that $\tilde{\pmb{\theta}}$ follows a Dirichlet distribution with parameter $\tau'$ if it admits the pdf $p(\pmb{\theta})=B(\tau')^{-1}\prod_{j=1}^{n}{\theta}_j^{\tau_j'-1}$, where $\tau_j'>0$ for all $j$ and $B(\tau')$ is a normalizing constant. The Dirichlet distribution is a conjugate prior in this setting, meaning that the posterior distribution is also Dirichlet with updated parameters $\tau$, $\tau_j=\tau_j'+\sum_{i=1}^{N}\mathbb{I}({\pmb{\xi}}^i=r_j),\ j=1,\cdots,n$. When $\tau'=\pmb{e}$, the Dirichlet distribution reduces to a uniform distribution, a common uninformative prior.  Given data set ${\mathcal{S}^N}$, we can determine the posterior mode ${\hat{\pmb{\theta}}}$ by maximizing a posteriori estimation, and calculate the observed information matrix
\[I(\hat{\pmb{\theta}})= 
\begin{pmatrix}
	(\tau_{1}-1)\frac{1}{\hat{\theta}_{1}^2} & 0 & \cdots & 0\\
	0 & (\tau_{2}-1)\frac{1}{\hat{\theta}_{2}^2} & \cdots & 0\\
	\vdots & \vdots & \ddots & \vdots\\
	0 & 0 & \cdots & (\tau_{n}-1)\frac{1}{\hat{\theta}_{n}^2}\\
\end{pmatrix}.
\] 
The observed information $I(\hat{\pmb{\theta}})$ is a diagonal matrix which means that $\pmb{\theta}$ has independent components w.r.t. $\mathbb{P}_{\mathcal{S}^N}$, thus we can get the approximate posterior distribution for each parameter as
\begin{equation}
	p(\theta_{j}|{\mathcal{S}^N})\approx N(\hat{\theta}_{j},(\tau_{j}-1)^{-1}\hat{\theta}_{j}^2),\  j=1,\cdots,n.\label{3.3}
\end{equation}

By setting $\alpha'=1-\sqrt[n]{1-\alpha}$ for any $j=1,\cdots,n$, the multivariate credible interval which rejects if ${\theta}_j$ doesn't belong to the univariate credible interval at level $\alpha'$ is a valid credible interval since
\[\mathbb{P}_{\mathcal{S}^N}\left( \theta_{i}\text{ is accepted at level }\alpha'\text{ for all }j =1,\cdots,n\right)=\prod_{i=1}^n(1-\alpha')=1-\alpha\]
by independence. Therefore, the Bayesian credible interval  has the following (approximate) $(1 - \alpha)$ credible set around  $\mathbb{P}_{\hat{\pmb{\theta}}}$ with respect to $\mathbb{P}_{\mathcal{S}^N}$:
\[\left\{\mathbb{P}_{\pmb{\theta}}|\pmb{\theta}\in\Theta({\mathcal{S}^N},\alpha)\right\}\text{ which satisfies }\mathbb{P}_{\mathcal{S}^N}\left(\mathbb{P}_{\pmb{\theta}^c}\in\{\mathbb{P}_{\pmb{\theta}}|\pmb{\theta}\in\Theta({\mathcal{S}^N},\alpha)\}\right)\geq1-\alpha,\]
with
\begin{equation}
	\Theta({\mathcal{S}^N},\alpha)=\left\{\pmb{\theta}\in\Delta_n:|\theta_{j}-\hat{\theta}_{j}|\leq \frac{z_{1-\frac{\alpha'}{2}}}{(\tau_{j}-1)^{-1/2}\hat{\theta}_{j}},\  j=1,\cdots,n\right\}.\label{3.4}
\end{equation}

Thus (\ref{3.2}) can be reformulated as
\[\max_{\pmb{\theta}\in\Theta({\mathcal{S}^{N}},\alpha)}w^T\pmb{\theta}=\max_{\pmb{\theta}} \left\{w^T\pmb{\theta}\bigg|\sum_{j=1}^n\theta_{j}=1,\  |\theta_{j}-\hat{\theta}_{j}|\leq \frac{z_{1-\frac{\alpha'}{2}}}{(\tau_{j}-1)^{-1/2}\hat{\theta}_{j}},\  j=1,\cdots,n\right\}.\]
Denote \[A=\left[
	\pmb{e},
	-\pmb{e},
	\pmb{e}_1,
-\pmb{e}_1,
\cdots,
	\pmb{e}_n,
-\pmb{e}_n
\right],\] and \[C=\left[1,-1,\frac{z_{1-\frac{\alpha'}{2}}}{(\tau_{1}-1)^{-1/2}\hat{\theta}_{1}}+\hat{\theta}_1,\frac{z_{1-\frac{\alpha'}{2}}}{(\tau_{1}-1)^{-1/2}\hat{\theta}_{1}}-\hat{\theta}_1,\cdots,\frac{z_{1-\frac{\alpha'}{2}}}{(\tau_{n}-1)^{-1/2}\hat{\theta}_{n}}+\hat{\theta}_n,\frac{z_{1-\frac{\alpha'}{2}}}{(\tau_{n}-1)^{-1/2}\hat{\theta}_{n}}-\hat{\theta}_n\right]^T.\]

The dual problem for problem (\ref{3.2}) under the above setting is:
\[\min C^TX\ \text{s.t.}\ AX\leq w,\ X\geq0,\]
and the support function can be determined as the optimal value of the following linear programming problem:
			\begin{equation*}
	\begin{split}
		\delta^*(\pmb{v}|\Xi({\mathcal{S}^{N}},\epsilon,\alpha))=&\min_{\beta,w}\beta+\frac{1}{\epsilon}C^TX	\\
		\text{s.t.}\ &0\leq w,\ r_j^T\pmb{v}-w_j\leq\beta,\ j=1,\cdots,n, \\
		&AX\leq w,\ X\geq0.
	\end{split}
\end{equation*}    
\end{example}
\section{BDRCC Approximation with continuous distributions}
 When the underlying distribution $\mathbb{P}_{\pmb{\theta}^c}$ is a high-dimensional distribution with continuous support, most of the existent approaches, such as \cite{ref8,ref19_2,ref44}, are only applicable for distributions with independent components or particular dependence structures. A significant advantage of our schema is that it can cope with various dependence forms, which will be illustrated in this section. Without loss of generality, we assume that one can observe $N$ samples of each marginal distribution.
 \subsection{Independent Marginal Distributions}
Suppose that $\mathbb{P}_{\pmb{\theta}^c}$ has a continuous support with independent marginal distributions $\mathbb{P}_{\pmb{\theta}_i^c}$, $i=1,\cdots,d.$ That is, the components $\tilde{\xi}_{1},\cdots,\tilde{\xi}_{d}$ are independent. In this case, we can combine univariate credible intervals for individual marginal distributions to build a multivariate credible interval.
 
Let $\alpha'=1-\sqrt[d]{1-\alpha}$, the credible interval for the $i$-th marginal distribution at level $\alpha'$ is
 \[\Theta_{i}({\mathcal{S}^{N}},\alpha')=\{\pmb{\theta}_i:||[I(\hat{\pmb{\theta}}_i)]^{1/2}  (\pmb{\theta}_i - \hat{\pmb{\theta}}_i)||_2\leq z_{1-\frac{\alpha'}{2}}\}.\] 
Then we have $\mathbb{P}_{\mathcal{S}^N} (\mathbb{P}_{\pmb{\theta}_i}$ is accepted at level $\alpha'$ for $i =1,\cdots,d$)	=$\prod_{i=1}^d(1-\alpha')=1-\alpha$ by independence. The corresponding multivariate credible interval  is
 \[\Theta({\mathcal{S}^{N}},\alpha)=\{\pmb{\theta}:\pmb{\theta}=\otimes_{i=1}^d\pmb{\theta}_i,\pmb{\theta}_i\in\Theta_{i}({\mathcal{S}^{N}},\alpha')\},\]
 and thus the ambiguity set $\mathcal{P}_{\Theta({\mathcal{S}^{N}},\alpha)}=\left\{\mathbb{P}_{\pmb{\theta}}:\mathbb{P}_{\pmb{\theta}}=\prod_{i=1}^d\mathbb{P}_{\pmb{\theta}_i}\ \pmb{\theta}_i\in\Theta_{i}({\mathcal{S}^{N}},\alpha')\right\}$  will contain the true distribution with probability at least $1-\alpha$ w.r.t. $\mathbb{P}_{\mathcal{S}^N}$.

 \begin{theorem}\label{thm5.1}
      Suppose that $\mathbb{P}_{\pmb{\theta}^c}$ has independent marginal distributions $\mathbb{P}_{\pmb{\theta}_1^c},\cdots,\mathbb{P}_{\pmb{\theta}_d^c}$. Then for any $0<\epsilon<1$, we have that,  the following $\Xi^I({\mathcal{S}^{N}},\epsilon,\alpha)$ implies the BDRCC (\ref{BDRCC1}),
\[\Xi^I({\mathcal{S}^{N}},\epsilon,\alpha)=\left\{\pmb{\xi}\in\mathbb{R}^d:\xi_i=\text{VaR}_{1-\sqrt[d]{1-\epsilon}}^{\mathbb{P}_{\pmb{\theta}_i}}(\tilde{\xi}_i),\ \pmb{\theta}_i\in\Theta_{i}({\mathcal{S}^{N}},\alpha'),\ i=1,\cdots,d\right\}.\]
 Moreover,
 \[\delta^*(\pmb{v}|\Xi^I({\mathcal{S}^{N}},\epsilon,\alpha))=\max_{\pmb{\theta}\in\Theta({\mathcal{S}^{N}},\alpha)}\sum_{i=1}^dv_i\text{VaR}_{1-\sqrt[d]{1-\epsilon}}^{\mathbb{P}_{\pmb{\theta}_i}}(\tilde{\xi}_i).\]
 \end{theorem}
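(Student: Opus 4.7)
My plan is to apply Theorem \ref{thm2.1} with $\Theta_{\mathcal{S}^N}:=\Theta({\mathcal{S}^{N}},\alpha)$, the product credible set $\otimes_{i=1}^d\Theta_i({\mathcal{S}^{N}},\alpha')$ introduced just above. By that theorem it suffices to verify, for every $\pmb{v}\in\mathbb{R}^d$, the support-function inequality
\[
\delta^*(\pmb{v}|\Xi^I({\mathcal{S}^{N}},\epsilon,\alpha))\ge \sup_{\pmb{\theta}\in\Theta({\mathcal{S}^{N}},\alpha)}\text{VaR}_\epsilon^{\mathbb{P}_{\pmb{\theta}}}(\pmb{v}^T\tilde{\pmb{\xi}}).
\]
Writing $\epsilon':=1-\sqrt[d]{1-\epsilon}$ so that $(1-\epsilon')^d=1-\epsilon$, the argument then naturally splits into a support-function computation and a VaR-domination step.

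First, the support-function formula. The set $\Xi^I$ is the image, under the coordinate-wise map $(\pmb{\theta}_1,\dots,\pmb{\theta}_d)\mapsto(\text{VaR}_{\epsilon'}^{\mathbb{P}_{\pmb{\theta}_1}}(\tilde{\xi}_1),\dots,\text{VaR}_{\epsilon'}^{\mathbb{P}_{\pmb{\theta}_d}}(\tilde{\xi}_d))$, of the Cartesian product $\otimes_{i=1}^d \Theta_i({\mathcal{S}^{N}},\alpha')$, so $\Xi^I$ itself is a Cartesian product of one-dimensional ranges. The linear maximization $\sup_{\pmb{\xi}\in\Xi^I}\pmb{v}^T\pmb{\xi}$ therefore decouples coordinate by coordinate:
\[
\delta^*(\pmb{v}|\Xi^I)=\sum_{i=1}^d \sup_{\pmb{\theta}_i\in\Theta_i({\mathcal{S}^{N}},\alpha')} v_i\,\text{VaR}_{\epsilon'}^{\mathbb{P}_{\pmb{\theta}_i}}(\tilde{\xi}_i)=\max_{\pmb{\theta}\in\Theta({\mathcal{S}^{N}},\alpha)}\sum_{i=1}^d v_i\,\text{VaR}_{\epsilon'}^{\mathbb{P}_{\pmb{\theta}_i}}(\tilde{\xi}_i),
\]
which is precisely the formula claimed in the theorem.

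Second, the VaR domination. I fix any $\pmb{\theta}\in\Theta({\mathcal{S}^{N}},\alpha)$ and, for each coordinate $i$, select a scalar $\xi_i^\star$ so that $\mathbb{P}_{\pmb{\theta}_i}(v_i\tilde{\xi}_i\le v_i\xi_i^\star)\ge 1-\epsilon'$; concretely $\xi_i^\star=\text{VaR}_{\epsilon'}^{\mathbb{P}_{\pmb{\theta}_i}}(\tilde{\xi}_i)$ when $v_i\ge 0$, and its lower-tail counterpart when $v_i<0$. Because the marginals are independent under $\mathbb{P}_{\pmb{\theta}}$, the coordinate events are mutually independent and
\[
\mathbb{P}_{\pmb{\theta}}\bigl(\pmb{v}^T\tilde{\pmb{\xi}}\le \textstyle\sum_{i=1}^d v_i\xi_i^\star\bigr)\ge \prod_{i=1}^d(1-\epsilon')=1-\epsilon,
\]
which yields $\text{VaR}_\epsilon^{\mathbb{P}_{\pmb{\theta}}}(\pmb{v}^T\tilde{\pmb{\xi}})\le \sum_i v_i\xi_i^\star$. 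Taking the supremum over $\pmb{\theta}\in\Theta({\mathcal{S}^{N}},\alpha)$ and matching with the formula from the first step delivers the required support-function inequality; Theorem \ref{thm2.1} then packages the conclusion into BDRCC (\ref{BDRCC1}).

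The step I expect to be the main obstacle is handling directions $\pmb{v}$ whose components have mixed signs. For $v_i\ge 0$ the identification $\xi_i^\star=\text{VaR}_{\epsilon'}^{\mathbb{P}_{\pmb{\theta}_i}}(\tilde{\xi}_i)$ lands exactly inside $\Xi^I$, whereas for $v_i<0$ the coordinate event $\{v_i\tilde{\xi}_i\le v_i\xi_i^\star\}$ requires an $\epsilon'$-lower quantile, which a priori is not captured by the formal definition of $\Xi^I$. I would address this either by arguing that, as $\pmb{\theta}_i$ sweeps the credible interval $\Theta_i({\mathcal{S}^{N}},\alpha')$, the map $\pmb{\theta}_i\mapsto \text{VaR}_{\epsilon'}^{\mathbb{P}_{\pmb{\theta}_i}}(\tilde{\xi}_i)$ varies over a range wide enough that $\sup_{\pmb{\theta}_i} v_i\,\text{VaR}_{\epsilon'}^{\mathbb{P}_{\pmb{\theta}_i}}(\tilde{\xi}_i)$ majorizes the lower-quantile contribution required when $v_i<0$, or by a coordinate-wise sign-flip reparameterization $\tilde{\xi}_i\mapsto -\tilde{\xi}_i$ that reduces the analysis to the non-negative case. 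Once the sign issue is resolved, the remainder is the routine combination of Cartesian-product support functions with the $d$-th-root Bonferroni-type allocation $(1-\epsilon')^d=1-\epsilon$.
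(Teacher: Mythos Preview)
Your approach is essentially the paper's: both bound $\text{VaR}_\epsilon^{\mathbb{P}_{\pmb{\theta}}}(\pmb{v}^T\tilde{\pmb{\xi}})$ by $\sum_i v_i\,\text{VaR}_{\epsilon'}^{\mathbb{P}_{\pmb{\theta}_i}}(\tilde{\xi}_i)$ via independence and the allocation $(1-\epsilon')^d=1-\epsilon$, identify this with the support function of the Cartesian-product set $\Xi^I$, and then invoke Theorem~\ref{thm2.1}. The paper obtains the pointwise VaR bound by citing Theorem~4.1 of \cite{ref27}; you derive it directly from the product of the coordinate events. These are the same argument, and your support-function computation is exactly the ``Moreover'' claim.

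You are right to single out the mixed-sign case as the obstacle, and the paper's own proof does not address it either: the displayed inequality there passes from $\text{VaR}_{\epsilon'}(v_i\tilde{\xi}_i)$ to $v_i\,\text{VaR}_{\epsilon'}(\tilde{\xi}_i)$, which is valid only for $v_i\ge 0$; for $v_i<0$ the correct identity gives $v_i\,\text{VaR}_{1-\epsilon'}(\tilde{\xi}_i)$, a lower quantile that is not captured by the definition of $\Xi^I$. Neither of your proposed fixes closes this gap in general. The first asks that the range of $\pmb{\theta}_i\mapsto\text{VaR}_{\epsilon'}^{\mathbb{P}_{\pmb{\theta}_i}}(\tilde{\xi}_i)$ over $\Theta_i({\mathcal{S}^{N}},\alpha')$ reach down to the lower $\epsilon'$-quantile, which no hypothesis guarantees (and which clearly fails as the credible interval shrinks to a point). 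The second, a coordinate-wise sign flip, alters the set $\Xi^I$ itself and so does not establish the theorem as stated. In short, your proposal matches the paper's argument and, in addition, correctly diagnoses a weakness that the paper leaves implicit.
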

 \begin{proof}
      When the marginals of $\mathbb{P}_{\pmb{\theta}}$ are known and independent, it is known from Theorem 4.1 in \cite{ref27} that
\[\text{VaR}_\epsilon^{\mathbb{P}_{\pmb{\theta}}}(\sum_{i=1}^dv_i\tilde{\xi}_i)\leq\min_{\lambda:\prod_{i=1}^d\lambda_i=1-\epsilon}\sum_{i=1}^dv_i\text{VaR}_{1-\lambda_i}^{\mathbb{P}_{\theta_i}}(\tilde{\xi}_i)\leq\sum_{i=1}^dv_i\text{VaR}_{1-\sqrt[d]{1-\epsilon}}^{\mathbb{P}_{\theta_i}}(\tilde{\xi}_i),\]
where the last inequality is obtained by letting $\lambda_i={\sqrt[d]{1-\epsilon}}$ for all $i$.

The above upper bound implies the following worst-case bound
\[\sup_{\pmb{\theta}\in\Theta({\mathcal{S}^{N}},\alpha)}\text{VaR}_\epsilon^{\mathbb{P}_{\pmb{\theta}}}(\sum_{i=1}^dv_i\tilde{\xi}_i)\leq\sup_{\pmb{\theta}_i\in\Theta_{i}({\mathcal{S}^{N}},\alpha')}\sum_{i=1}^dv_i\text{VaR}_{1-\sqrt[d]{1-\epsilon}}^{\mathbb{P}_{\theta_i}}(\tilde{\xi}_i).\]
 With Theorem \ref{thm3.1}, we can immediately deduce that $\delta^*(\pmb{v}|\Xi^I({\mathcal{S}^{N}},\epsilon,\alpha))$ is the support function of $\Xi({\mathcal{S}^{N}},\epsilon,\alpha)$.
 \end{proof}

\begin{example}(Ambiguity set for normal marginal distributions)
    We consider $d$ independent random variables with $\mathbb{P}_{\pmb{\theta}^c_i}=N(\mu_i^c ,(\sigma_i^c) ^2)$, $\mu_i\in\mathbb{R}$, ${\sigma_i^c}>0$, for $1\leq i \leq d$. Recall that for a random variable $\tilde{\xi}_i$ following a normal distribution $\mathbb{P}_{\pmb{\theta}_i}=N(\mu_i ,\sigma_i^2)$ where $\pmb{\theta}_i=(\mu_i,\sigma_i)$, we have
\[\text{VaR}_\epsilon^{\mathbb{P}_{\pmb{\theta}_i}}(\tilde{\xi}_i)=\mu_i +\sigma_iz_{1-\epsilon},\]
where $z_{1-\epsilon}$ denotes the $1-\epsilon$ quantile of the standard normal  distribution. Thus we can get that
\[\sup_{\pmb{\theta}\in\Theta({\mathcal{S}^{N}},\alpha)}\text{VaR}_\epsilon^{\mathbb{P}_{\pmb{\theta}}}(\sum_{i=1}^dv_i\tilde{\xi}_i)\leq\sum_{i=1}^d\sup_{\pmb{\theta}_i\in\Theta_{i}({\mathcal{S}^{N}},\alpha')}v_i\left(\mu_i +\sigma_iz_{\sqrt[d]{1-\epsilon}}\right).\]
\end{example}

From which we can deduce that
\[\Xi^I({\mathcal{S}^{N}},\epsilon,\alpha)=\left\{\pmb{\xi}\in\mathbb{R}^d:\xi_i=\mu_i +\sigma_i z_{\sqrt[d]{1-\epsilon}},\ (\mu_i,\sigma_i)\in\Theta_{i}({\mathcal{S}^{N}},\alpha'),\ i=1,\cdots,d\right\}\]
satisfies the BDRCC (\ref{BDRCC1}) and the support function of $\Xi^I({\mathcal{S}^{N}},\epsilon,\alpha)$ is
 \[\delta^*(\pmb{v}|\Xi^I({\mathcal{S}^{N}},\epsilon,\alpha))=\max_{\pmb{\theta}\in\Theta({\mathcal{S}^{N}},\alpha)}\sum_{i=1}^dv_i\mu_i +v_i\sigma_i z_{\sqrt[d]{1-\epsilon}}.\]
 \subsection{Copula-based Dependent Marginal Distributions}
 In probability theory, a copula is a function that describes the dependence among random variables. It allows us to model a multi-dimensional random vector with any marginal distributions. Copulas are often used to describe the joint distribution of variables in multivariate analysis, particularly in finance \cite{copulasfinance}, insurance \cite{copulasinsurance}, and risk management \cite{copulasrisk}.
  
By specifying a copula, one can model the correlation between variables without restricting specific forms about the marginal distributions. Nevertheless, we have not seen any study which explores the DRCC problem under the copula function framework.
 
It has been shown in many studies like \cite{ref2,3,pda1} that copula function can improve the worst-case VaR bounds. Inspired by this, we characterize correlations through copula functions and thus construct more accurate uncertainty sets.  Throughout this subsection, we extensively use the concept of a copula to model the dependence structure of the marginal distributions $\mathbb{P}_{\pmb{\theta}^c_i}$, $1\leq i\leq d$.
 
As we know, a copula $C$ is a $d$-dimensional distribution function on $[0,1]^d$ with uniform marginals. Given a copula $C$ and $d$ univariate distributions $F_{\theta_1} ,\cdots, F_{\theta_d}$, one can define a joint distribution $F_{\pmb{\theta}}$ on $ \mathbb{R}^d$ with these $d$ marginal distributions by	 
 \[F_{\pmb{\theta}}(\xi_1,\cdots,\xi_d)=C(F_{\pmb{\theta}_1}(\xi_1),\cdots,F_{\pmb{\theta}_d}(\xi_d)).\]
 Here $F_{\pmb{\theta}_i}$ is the cumulative distribution function of $\xi_i$ w.r.t. $\mathbb{P}_{\pmb{\theta}_i}$
 \[F_{\pmb{\theta}_i}(\xi_i)=\mathbb{P}_{\pmb{\theta}_i}(-\infty,\xi_i],\]
 and $F_{\pmb{\theta}}$ is the cumulative distribution function of $\pmb{\xi}$ w.r.t. $\mathbb{P}_{\pmb{\theta}}$
 \[F_{\pmb{\theta}}(\xi_1,\cdots,\xi_d)=\mathbb{P}_{\pmb{\theta}}(-\infty,\xi_1]\times\cdots\times(-\infty,\xi_d].\]	 
 We denote the above process as
$\mathbb{P}_{\pmb{\theta}}=C(\mathbb{P}_{\pmb{\theta}_1},\mathbb{P}_{\pmb{\theta}_2},\cdots,\mathbb{P}_{\pmb{\theta}_i},\cdots,\mathbb{P}_{\pmb{\theta}_d})$ for short.
 
From the results in subsection 5.1, the credible interval for the $i$-th parameter can be defined as
 \[\Theta_{i}({\mathcal{S}^{N}},\alpha/d)=\{\pmb{\theta}_i:||[I(\hat{\pmb{\theta}}_i)]^{1/2}  (\pmb{\theta}_i - \hat{\pmb{\theta}}_i)||_2\leq z_{1-\frac{\alpha}{2d}}\},\]
 and the multivariate credible interval can be then defined as 
 \[\Theta({\mathcal{S}^{N}},\alpha)=\{\pmb{\theta}:\pmb{\theta}=\otimes_{i=1}^d\pmb{\theta}_i,\pmb{\theta}_i\in\Theta_{i}({\mathcal{S}^{N}},\alpha/d),\ 1\leq i\leq d\}.\]
Then with the copula $C$,
  \[\mathcal{P}_{\Theta({\mathcal{S}^{N}},\alpha)}=\left\{\mathbb{P}_{\pmb{\theta}}:\mathbb{P}_{\pmb{\theta}}=C(\mathbb{P}_{\pmb{\theta}_1},\mathbb{P}_{\pmb{\theta}_2},\cdots,\mathbb{P}_{\pmb{\theta}_i},\cdots,\mathbb{P}_{\pmb{\theta}_d}),\ \pmb{\theta}_i\in\Theta_{i}({\mathcal{S}^{N}},\alpha/d),\ 1\leq i\leq d\right\}\]
  will contain the true probability distribution with at least $1-\alpha$ w.r.t. $\mathbb{P}_{\mathcal{S}^N}$.
   
 To demonstrate the concrete application of the above result for constructing the uncertainty set, we first introduce an important concept in copula theory. 

\begin{definition}
    Positive dependence is that a copula $C_\Xi$ of the random variable $\pmb{\xi}$ satisfies $C_\Xi(u) \geq C_l(u)$, $u \in [0, 1]^d$. Here $C_l : [0, 1]^d\to [0, 1]$ is a componentwise increasing function satisfying
 \[W(u)\leq C_l(u)\leq M(u),\ u\in[0,1]^d\]
 with $W(u)=(u_1+\cdots+u_d-n+1)_+$ and $M(u)=\min\{u_1,\cdots,u_d\}$.  
\end{definition}
 
 As an important type of copulas, positive dependence is particularly relevant in the context of BDRCC, since it can have significant implications for the joint behavior of random variables. For example, to get an improved bound for the VaR estimate, the positive dependence assumption has been adopted for a long time (e.g., \cite{ref27,pda1}). Two important results about positive dependence in \cite{main3} are shown as follows.

 \begin{lemma}\label{lem5.1}
     For the random vector $(\tilde{\xi}_1,\cdots,\tilde{\xi}_d)$ with fixed marginal distribution functions $F_1,\cdots,F_d$, if $C_\Xi\geq C_l$, then
 \[\text{VaR}_{\epsilon}(\sum_{i=1}^d\tilde{\xi}_i)\leq\sum_{i=1}^dF_i^{-1}(\epsilon^*),\]
 where $\epsilon^*=\delta_{C_l}^{-1}(\epsilon)$ and $\delta_{C_l}: [0, 1] \to[0, 1]$ is defined as $\delta_{C_l}(u)=C_l(u,\cdots,u)$.
 \end{lemma}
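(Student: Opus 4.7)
The plan is to pin down an explicit candidate upper bound, namely $t := \sum_{i=1}^d F_i^{-1}(\epsilon^*)$ with $\epsilon^* = \delta_{C_l}^{-1}(\epsilon)$, and then show that $\mathbb{P}(\sum_i \tilde{\xi}_i \leq t)$ is large enough to force the corresponding VaR below $t$. The whole argument has three ingredients: a trivial event inclusion reducing the sum to componentwise bounds, Sklar's theorem to rewrite the componentwise event through the copula $C_\Xi$, and the positive dependence hypothesis to replace $C_\Xi$ by the lower bound $C_l$.

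Concretely, first I would invoke the elementary inclusion obtained by summing componentwise inequalities,
\[\left\{\tilde{\xi}_i \leq F_i^{-1}(\epsilon^*)\ \text{for all}\ i\right\} \;\subseteq\; \left\{\sum_{i=1}^d \tilde{\xi}_i \leq \sum_{i=1}^d F_i^{-1}(\epsilon^*)\right\}.\]
Taking probabilities and applying Sklar's theorem yields
\[\mathbb{P}\!\left(\sum_{i=1}^d \tilde{\xi}_i \leq t\right) \;\geq\; \mathbb{P}\!\left(\tilde{\xi}_i \leq F_i^{-1}(\epsilon^*),\ i=1,\dots,d\right) \;=\; C_\Xi\!\left(F_1(F_1^{-1}(\epsilon^*)),\dots,F_d(F_d^{-1}(\epsilon^*))\right),\]
which by $F_i(F_i^{-1}(u))\geq u$ together with componentwise monotonicity of $C_\Xi$ is at least $C_\Xi(\epsilon^*,\dots,\epsilon^*)$. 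The hypothesis $C_\Xi \geq C_l$ and the definition of $\delta_{C_l}$ then close the loop:
\[C_\Xi(\epsilon^*,\dots,\epsilon^*) \;\geq\; C_l(\epsilon^*,\dots,\epsilon^*) \;=\; \delta_{C_l}(\epsilon^*) \;=\; \epsilon.\]
Feeding this back into the definition of VaR delivers the claimed inequality $\text{VaR}_\epsilon(\sum_i \tilde{\xi}_i) \leq \sum_i F_i^{-1}(\epsilon^*)$.

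There is no substantive obstacle in the argument; the only care points are technical bookkeeping. The main one is the existence and behavior of $\delta_{C_l}^{-1}$: since $\delta_{C_l}(u) = C_l(u,\dots,u)$ is nondecreasing on $[0,1]$ with $\delta_{C_l}(0)=0$ and $\delta_{C_l}(1)=1$, one should interpret $\delta_{C_l}^{-1}$ as a generalized inverse and verify $\delta_{C_l}(\delta_{C_l}^{-1}(\epsilon)) \geq \epsilon$ in the possibly discontinuous case. A secondary subtlety is that the derivation produces $\mathbb{P}(\sum_i \tilde{\xi}_i \leq t) \geq \epsilon$, which matches the stated conclusion under the $\epsilon$-quantile convention of VaR inherited from \cite{main3}; under the paper's own Section~2 convention the same proof goes through after the harmless substitution $\epsilon \mapsto 1-\epsilon$ in the definition of $\epsilon^*$.
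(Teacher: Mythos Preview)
The paper does not actually prove Lemma~\ref{lem5.1}; it is quoted verbatim from reference~\cite{main3} as a known result and then applied as a black box in the proof of Theorem~\ref{thm5.2}. So there is no ``paper's own proof'' to compare against.

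Your argument is correct and is in fact the standard one for this inequality: the inclusion of the componentwise event in the sum event, Sklar's representation to express the joint lower-tail probability through the copula, and the positive-dependence bound $C_\Xi\geq C_l$ evaluated on the diagonal are exactly the ingredients that drive such results. Your handling of the two technical points---the generalized inverse $\delta_{C_l}^{-1}$ and the discrepancy between the quantile convention implicit in the cited lemma and the paper's Section~2 definition $\text{VaR}_\epsilon^{\mathbb{P}}(X)=\inf\{t:\mathbb{P}(X\leq t)\geq 1-\epsilon\}$---is accurate; indeed, when the paper actually \emph{applies} the lemma in the proof of Theorem~\ref{thm5.2} it performs precisely the substitution you flag, writing $\epsilon^\star=1-\delta_{C_l}^{-1}(1-\epsilon)$.
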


\begin{lemma}\label{lem5.2}
    Let  $S\subset[0,1]^d$ and $C_l$ be a componentwise increasing function on $[0,1]^d$ such that $W(u)\leq C_l(u)\leq M(u) $, $u\in[0,1]^d$. Define the bound $B^{S,C_l}:[0,1]^d\to[0,1]$ as
 \[B^{S,C_l}(u)=\max\left(W(u),\sup_{a\in S}\left\{C_l(a)-\sum_{i=1}^d(a_i-u_i)_+\right\}\right).\]
 Then if $C_\Xi(u)\geq C_l(u)$ for all $u\in S$, we have $C_\Xi(u)\geq B^{S,C_l}(u)$ for all $u\in[0,1]^d$.
\end{lemma}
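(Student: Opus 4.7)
My approach is to break the inequality $C_\Xi(u)\geq B^{S,C_l}(u)$ into its two components according to the definition of $B^{S,C_l}$. The lower bound by $W(u)$ is simply the Fréchet--Hoeffding lower bound, which every copula satisfies, so no further work is needed for that part. The substantive step is to prove, for every $a\in S$ and every $u\in[0,1]^d$, that
\[
C_\Xi(u)\;\geq\;C_l(a)-\sum_{i=1}^d(a_i-u_i)_+.
\]

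The plan for this inequality is to interpolate through the componentwise minimum $v\in[0,1]^d$ defined by $v_i=\min(u_i,a_i)$. Two standard copula properties then take over. First, since $v\leq u$ coordinatewise, monotonicity of $C_\Xi$ gives $C_\Xi(u)\geq C_\Xi(v)$. Second, since $v\leq a$ coordinatewise, the Lipschitz property of copulas (with unit constant in each coordinate, which follows from the $d$-increasing rectangle inequality for distribution functions with uniform marginals) yields
\[
C_\Xi(a)-C_\Xi(v)\;\leq\;\sum_{i=1}^d(a_i-v_i)\;=\;\sum_{i=1}^d(a_i-u_i)_+.
\]
Chaining these two inequalities with the hypothesis $C_\Xi(a)\geq C_l(a)$, which is available because $a\in S$, produces
\[
C_\Xi(u)\;\geq\;C_\Xi(v)\;\geq\;C_\Xi(a)-\sum_{i=1}^d(a_i-u_i)_+\;\geq\;C_l(a)-\sum_{i=1}^d(a_i-u_i)_+.
\]
Taking the supremum over $a\in S$ and combining with the Fréchet--Hoeffding bound then delivers $C_\Xi(u)\geq B^{S,C_l}(u)$.

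The only delicate point is the Lipschitz estimate, since it must be verified that the constant $1$ per coordinate does indeed hold for an arbitrary copula $C_\Xi$. I would handle this by passing from $v$ to $a$ one coordinate at a time: at step $i$, increasing the $i$-th argument from $v_i$ to $a_i$ while holding all other coordinates fixed increases $C_\Xi$ by at most $a_i-v_i$, because the induced increment is bounded by the marginal increment $F_{\Xi,i}(a_i)-F_{\Xi,i}(v_i)=a_i-v_i$ via the $d$-increasing property. Telescoping across the $d$ coordinates yields the required bound. Everything else reduces to routine monotonicity and the trivial observation $a_i-\min(u_i,a_i)=(a_i-u_i)_+$, so I expect no further obstacles.
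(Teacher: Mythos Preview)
Your proof is correct. Note, however, that the paper does not supply its own proof of this lemma: it is quoted from an external reference (the two lemmas are introduced by ``Two important results about positive dependence in \cite{main3} are shown as follows''), so there is no in-paper argument to compare against. Your approach---the Fr\'echet--Hoeffding lower bound for the $W(u)$ term, and for the second term interpolating through $v_i=\min(u_i,a_i)$, then applying copula monotonicity, the coordinatewise $1$-Lipschitz property of copulas, and the hypothesis $C_\Xi(a)\geq C_l(a)$ on $S$---is the standard one and is sound; the identity $a_i-\min(u_i,a_i)=(a_i-u_i)_+$ makes the bookkeeping exact. The paper itself invokes precisely this Lipschitz inequality (for $C_l$, citing Theorem~1.5.1 in \cite{ref9}) inside the proof of Theorem~\ref{thm5.2}, so your key tool is the intended one.
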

 \subsubsection{Positive dependence in tails}
 Positive dependence in tails refers to a situation where extreme values of one variable tend to be associated with extreme values of other variables. This phenomenon occurs in many areas such as insurance and finance. For example, in the insurance industry, positive tail dependence can occur between different types of insurance claims. If a major hurricane causes widespread property damage, it may also lead to an increase in auto insurance claims due to more accidents on the roads, resulting in positive tail dependence between these two types of claims.
 
 To approximate the resulting BDRCC in this case, we first consider the computation of an upper bound on $\text{VaR}_\epsilon^{{\mathbb{P}_{\pmb{\theta}}}}(\pmb{v}^T\tilde{\pmb{\xi}})$ when positive dependence is assumed only in the upper tails of distributions. 

\begin{theorem}\label{thm5.2}
    Suppose the copula $C_\Xi$ of $(\tilde{\xi}_1,\cdots,\tilde{\xi}_d)$ satisfies that $C_\Xi\geq C_l$ on $S=[\beta,1]^d$, with $\epsilon\leq1-\beta$, then 
 \[\Xi^C({\mathcal{S}^{N}},\epsilon,\alpha)=\left\{\pmb{\xi}\in\mathbb{R}^d:\xi_i=\text{VaR}_{1-\delta_{C_l}^{-1}(1-\epsilon)}^{\mathbb{P}_{\pmb{\theta}_i}}(\tilde{\xi}_i),\ \pmb{\theta}_i\in\Theta_{i}({\mathcal{S}^{N}},\alpha/d),\ i=1,\cdots,d\right\}\]
 implies the BDRCC (\ref{BDRCC1})  for all $0<\epsilon<1$.
 Moreover,
 \[\delta^*(\pmb{v}|\Xi^C({\mathcal{S}^{N}},\epsilon,\alpha))=\sum_{i=1}^d\max_{\pmb{\theta}_i\in\Theta_{i}({\mathcal{S}^{N}},\alpha/d)}v_i\text{VaR}_{1-\delta_{C_l}^{-1}(1-\epsilon)}^{\mathbb{P}_{\pmb{\theta}_i}}(\tilde{\xi}_i).\]
\end{theorem}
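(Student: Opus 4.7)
The plan is to verify the sufficient condition of Theorem~\ref{thm2.1} by showing that for every $\pmb{v}\in\mathbb{R}^d$,
\[
\delta^{*}(\pmb{v}\mid\Xi^{C}({\mathcal{S}^{N}},\epsilon,\alpha))\;\geq\;\sup_{\pmb{\theta}\in\Theta({\mathcal{S}^{N}},\alpha)}\text{VaR}_{\epsilon}^{\mathbb{P}_{\pmb{\theta}}}(\pmb{v}^{T}\tilde{\pmb{\xi}}),
\]
after which the BDRCC implication is immediate from Theorem~\ref{thm2.1}. The ``moreover'' identity will then fall out as a direct computation of the left-hand side.

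First I would compute the support function. Because $\Theta({\mathcal{S}^{N}},\alpha)=\otimes_{i=1}^{d}\Theta_{i}({\mathcal{S}^{N}},\alpha/d)$ and each coordinate of a point of $\Xi^{C}$ depends only on the corresponding $\pmb{\theta}_{i}$, the supremum in $\delta^{*}(\pmb{v}\mid\Xi^{C})=\sup_{\pmb{\xi}\in\Xi^{C}}\pmb{v}^{T}\pmb{\xi}$ decouples coordinate-wise, yielding the closed form
\[
\delta^{*}(\pmb{v}\mid\Xi^{C})=\sum_{i=1}^{d}\max_{\pmb{\theta}_{i}\in\Theta_{i}({\mathcal{S}^{N}},\alpha/d)}v_{i}\,\text{VaR}_{1-\delta_{C_{l}}^{-1}(1-\epsilon)}^{\mathbb{P}_{\pmb{\theta}_{i}}}(\tilde{\xi}_{i}),
\]
which is exactly the stated ``moreover'' identity.

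Next I would derive the worst-case VaR bound. Fix $\pmb{\theta}\in\Theta({\mathcal{S}^{N}},\alpha)$. Since $C_{\Xi}\geq C_{l}$ only on $S=[\beta,1]^{d}$, I first apply Lemma~\ref{lem5.2} to lift this to a global lower bound $C_{\Xi}\geq B^{S,C_{l}}$ on $[0,1]^{d}$. Along the diagonal, when $u\geq\beta$ the point $a=(u,\ldots,u)$ lies in $S$, so the supremum in the definition of $B^{S,C_{l}}$ is attained there and yields $\delta_{B^{S,C_{l}}}(u)=C_{l}(u,\ldots,u)=\delta_{C_{l}}(u)$. The assumption $\epsilon\leq 1-\beta$ is what guarantees that the quantile level $u=\delta_{C_{l}}^{-1}(1-\epsilon)$ lies in $[\beta,1]$. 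Feeding $B^{S,C_{l}}$ into Lemma~\ref{lem5.1} and invoking the diagonal agreement just described, after absorbing the scaling by $v_{i}$ into the marginal (which preserves the copula), gives
\[
\text{VaR}_{\epsilon}^{\mathbb{P}_{\pmb{\theta}}}(\pmb{v}^{T}\tilde{\pmb{\xi}})\;\leq\;\sum_{i=1}^{d}v_{i}\,\text{VaR}_{1-\delta_{C_{l}}^{-1}(1-\epsilon)}^{\mathbb{P}_{\pmb{\theta}_{i}}}(\tilde{\xi}_{i}).
\]
Taking the supremum over $\pmb{\theta}\in\Theta({\mathcal{S}^{N}},\alpha)$ and exchanging it with the sum by the Cartesian-product structure reproduces exactly $\delta^{*}(\pmb{v}\mid\Xi^{C})$, and Theorem~\ref{thm2.1} completes the argument.

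The chief technical obstacle is the passage from the partial copula bound on $[\beta,1]^{d}$ to a usable diagonal value in Lemma~\ref{lem5.1}: one must carefully check that $B^{S,C_{l}}$ is componentwise increasing and dominated between $W$ and $M$, that it agrees with $C_{l}$ along the diagonal for $u\geq\beta$, and that the hypothesis $\epsilon\leq 1-\beta$ is precisely what places the quantile level $\delta_{C_{l}}^{-1}(1-\epsilon)$ inside this admissible diagonal segment. A secondary and more routine subtlety is translating the sum bound of Lemma~\ref{lem5.1} into a weighted-sum bound for $\pmb{v}^{T}\tilde{\pmb{\xi}}$; positive scalings are absorbed into the marginals of $\mathbb{P}_{\pmb{\theta}_{i}}$, and the coordinate-wise maximization in $\delta^{*}(\pmb{v}\mid\cdot)$ handles the remaining sign structure.
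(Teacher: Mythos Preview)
Your proposal is correct and follows essentially the same route as the paper: both lift the partial copula bound to the global lower bound $B^{S,C_l}$ via Lemma~\ref{lem5.2}, establish that $\delta_{B^{S,C_l}}=\delta_{C_l}$ on the relevant diagonal segment, feed this into Lemma~\ref{lem5.1} to obtain the coordinate-wise VaR bound, and then invoke Theorem~\ref{thm2.1}/\ref{thm3.1}. The only substantive difference is that the paper explicitly proves the diagonal coincidence---using componentwise monotonicity of $C_l$ together with the Lipschitz inequality $C_l(a)-C_l(u)\leq\sum_i(a_i-u_i)$---whereas you correctly flag this as the chief technical point but do not spell out the argument.
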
 
\begin{proof}
     For a fixed $u\in[1-\epsilon,1]^d$, we have
 \begin{equation}
 	\max_{a\in[\beta,1]^d}\left\{C_l(a)-\sum_{i=1}^d(a_i-u_i)_+\right\}=\max_{a\in[u,1]^d}\left\{C_l(a)-\sum_{i=1}^d(a_i-u_i)\right\}= C_l(u).\label{4.1}
 \end{equation}
 The first equality is deduced by the componentwise increasingness of the copula $C_l$, which guarantees that any change of $a_j < u_j$ to $u_j$ will never affect the maximization in (\ref{4.1}). The second equality holds by Theorem 1.5.1 in \cite{ref9}, which shows the Lipschitz continuity of $C_l$:
 \[C_l(a)-C_l(u)\leq\sum_{i=1}^d(a_i-u_i).\]
 With the definition of $B^{S,C_l}$, we have by (\ref{4.1}) that $C_l=B^{S,C_l}$ for $u\geq1-\epsilon$. Furthermore, we can ensure that $\delta_{B^{S,C_l}}(u)=\delta_{C_l}(u)$. Thus we can derive an upper bound of VaR with the help of Lemma \ref{lem5.1}:
 \begin{equation}
 	\text{VaR}_\epsilon^{\mathbb{P}_{\pmb{\theta}}}(\pmb{v}^T\tilde{\pmb{\xi}})\leq
 	\sum_{i=1}^dv_iF_{\pmb{\theta}_i}^{-1}(\epsilon^\star)=\sum_{i=1}^dv_i\text{VaR}_{\epsilon^\star}^{\mathbb{P}_{\pmb{\theta}_i}}(\tilde{\xi}_i)\label{4.2}
 \end{equation}
 where $\epsilon^\star=1-\delta_{C_l}^{-1}(1-\epsilon)$.
 
Here, (\ref{4.2}) implies that
 \[\sup_{\pmb{\theta}\in\Theta({\mathcal{S}^{N}},\alpha)}\text{VaR}_\epsilon^{\mathbb{P}_{\pmb{\theta}}}(\pmb{v}^T\tilde{\pmb{\xi}})\leq\sum_{i=1}^d\sup_{\pmb{\theta}_i\in\Theta_{i}({\mathcal{S}^{N}},\alpha/d)}v_i\text{VaR}_{1-\delta_{C_l}^{-1}(1-\epsilon)}^{\mathbb{P}_{\pmb{\theta}_i}}(\tilde{\xi}_i).\]
 With Theorem \ref{thm3.1}, we can see immediately that $\delta^*(\pmb{v}|\Xi^C({\mathcal{S}^{N}},\epsilon,\alpha))$ is the support function of $\Xi({\mathcal{S}^{N}},\epsilon,\alpha)$.
\end{proof}

 \begin{remark}
      Recall the most basic copula function $\Pi(u_1,\cdots,u_d)=\prod_{i=1}^du_i$ means that the marginal distributions are independent. When we set $C_l$ as $\Pi$ in Theorem \ref{lem5.2}, we can get $\delta_{\Pi}(u)=u^d$ and therefore $\delta_{\Pi}^{-1}(1-\epsilon)=\sqrt[d]{1-\epsilon}$. Thus it is easy to see that the uncertainty set $\Xi({\mathcal{S}^{N}},\epsilon,\alpha)$ generated with Theorem \ref{lem5.2} is an upper bound compared with that generated with Theorem \ref{thm5.1} for the reason that $\Theta_{i}({\mathcal{S}^{N}},1-\sqrt[d]{1-\alpha})\subsetneq\Theta_{i}({\mathcal{S}^{N}},\alpha/d)$ for all $1\leq i\leq d$.
 \end{remark}
 \begin{remark}
      Theorem \ref{lem5.2} would give a same VaR bound as Lemma \ref{lem5.1} if the positive dependence assumption $C_\Xi\geq C_l$ only holds in the upper $\epsilon$-tails of the distribution of $\tilde{\pmb{\xi}}$. This means that we can obtain the same uncertainty set $\Xi({\mathcal{S}^{N}},\epsilon,\alpha)$ which implies the BDRCC (\ref{BDRCC1}) under a weaker assumption.
 \end{remark}

 \begin{example}
     In practice, the random vector $\tilde{\pmb{\xi}}$ can often be divided into some subvectors which are independent, while the components of each subvector may be dependent. This setup can be quite general and realistic when the portfolio consists of different risky assets from several industries, as \cite{main2} suggests. The independence among subvectors can be tested statistically to simplify the assumption of correlation among marginal distributions (i.e., the copula of the full distribution). To characterize this situation, we assume that the components $(\tilde{\xi}_1,\cdots,\tilde{\xi}_d)$ of a $d$-dimensional random vector are split into $K$ subvectors. Each subvector $I_k$ has  $d_k=|I_k|$ components and $\sum_{k=1}^Kd_k = d$. Then according to our framework, the copula $C^k_\Xi$ of the subvector $(\tilde{\xi}_{i},i \in I_k)$ satisfies $C^k_\Xi \geq C^k_l$ on $S_k = [1-\epsilon, 1]^{d_k}$, thus $C_\Xi \geq C_l$ on $S = [1-\epsilon, 1]^d$ with
 \[C_l(u_1,\cdots,u_d)=\prod_{k=1}^KC_l^k(u_i,i\in I_k).\]
 Applying Theorem \ref{thm5.2} with the above copula function $C_l$, we can guarantee that
 \[\Xi^C({\mathcal{S}^{N}},\epsilon,\alpha)=\left\{\pmb{\xi}\in\mathbb{R}^d:\xi_i=\text{VaR}_{1-\delta_{C_l}^{-1}(1-\epsilon)}^{\mathbb{P}_{\pmb{\theta}_i}}(\tilde{\xi}_i),\ \pmb{\theta}_i\in\Theta_{i}({\mathcal{S}^{N}},\frac{1-\sqrt[k]{1-\alpha}}{d_k}),\ i=1,\cdots,d\right\}\]
 implies the BDRCC for all $0<\epsilon<1$.
 Moreover,
 \[\delta^*(\pmb{v}|\Xi({\mathcal{S}^{N}},\epsilon,\alpha))=\sum_{i=1}^d\max_{\pmb{\theta}_i\in\Theta_{i}({\mathcal{S}^{N}},\frac{1-\sqrt[k]{1-\alpha}}{d_k})}v_i\text{VaR}_{1-\delta_{C_l}^{-1}(1-\epsilon)}^{\mathbb{P}_{\pmb{\theta}_i}}(\tilde{\xi}_i),\]
 where $C_l(u_1,\cdots,u_d)=\prod_{k=1}^KC_l^i(u_j,j\in I_k)$.
 \end{example}
 \subsubsection{Known central domain}
It has been observed in some literature (\cite{ref2,ref5}) that the joint distribution can be observed in its central domain with statistical methods. Motivated by these applications, we make a similar assumption that a lower bound of the copula function $C_\Xi$ is known in its central domain rather than the upper $(1-\epsilon)$-tails, i.e., $S \cap [1-\epsilon, 1]^d = \emptyset$  in order to complement the case without the tail dependence. Under this situation, we have

\begin{theorem}
    Suppose the copula $C_\Xi$ of $(\tilde{\xi}_1,\cdots,\tilde{\xi}_d)$ satisfies that $C_\Xi\geq C_l$ on $S\subset\mathbb{R}^d$, with $S \cap [1-\epsilon, 1]^d=\emptyset$. Then
 \[\Xi^C({\mathcal{S}^{N}},\epsilon,\alpha)=\left\{\pmb{\xi}\in\mathbb{R}^d:\xi_i=\text{VaR}_{1-\delta_{W}^{-1}(1-\epsilon)}^{\mathbb{P}_{\pmb{\theta}_i}}(\tilde{\xi}_i),\ \pmb{\theta}_i\in\Theta_{i}({\mathcal{S}^{N}},\alpha/d),\ 1\leq i\leq d\right\}\]
 implies the BDRCC (\ref{BDRCC1}) for all $0<\epsilon<1$.
 Moreover,
 \[\delta^*(\pmb{v}|\Xi^C({\mathcal{S}^{N}},\epsilon,\alpha))=\sum_{i=1}^d\max_{\pmb{\theta}_i\in\Theta_{i}({\mathcal{S}^{N}},\alpha/d)}v_i\text{VaR}_{1-\delta_{W}^{-1}(1-\epsilon)}^{\mathbb{P}_{\pmb{\theta}_i}}(\tilde{\xi}_i).\]
\end{theorem}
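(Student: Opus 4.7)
The plan is to mirror the structure of the proof of Theorem 5.2, but substitute the Fréchet--Hoeffding lower bound $W$ for $C_l$ in the final VaR estimate, the point being that the hypothesis $S\cap[1-\epsilon,1]^d=\emptyset$ prevents the information $C_\Xi\ge C_l$ on $S$ from improving over $W$ in the upper-tail region that governs $\text{VaR}_\epsilon$. Concretely, I would first invoke Lemma \ref{lem5.2} to obtain the universal lower bound $C_\Xi(u)\ge B^{S,C_l}(u)$ on $[0,1]^d$, and then aim to show the identity $\delta_{B^{S,C_l}}^{-1}(1-\epsilon)=\delta_W^{-1}(1-\epsilon)$. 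Since $\delta_W(u,\ldots,u)=(du-d+1)_+$, the equation $\delta_W(u,\ldots,u)=1-\epsilon$ gives the critical diagonal value $u^\star=1-\epsilon/d$, so the whole task reduces to showing that $B^{S,C_l}(u^\star,\ldots,u^\star)=W(u^\star,\ldots,u^\star)=1-\epsilon$.

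For that diagonal evaluation, I would bound the sup term in the definition of $B^{S,C_l}$: for any $a\in S$, since $a\notin[1-\epsilon,1]^d$ there exists an index $k$ with $a_k<1-\epsilon$, and dropping the non-negative terms $(a_i-u^\star)_+$ for $i\ne k$ while using $C_l(a)\le M(a)=\min_i a_i\le a_k$ yields
\[
C_l(a)-\sum_{i=1}^d (a_i-u^\star)_+\ \le\ a_k\ <\ 1-\epsilon\ =\ W(u^\star,\ldots,u^\star).
\]
Taking the sup over $a\in S$ shows the max in $B^{S,C_l}(u^\star,\ldots,u^\star)$ is attained by $W$, hence $\delta_{B^{S,C_l}}(u^\star)=\delta_W(u^\star)=1-\epsilon$, and by monotonicity $\delta_{B^{S,C_l}}^{-1}(1-\epsilon)=\delta_W^{-1}(1-\epsilon)$.

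With this equality in hand, I would apply Lemma \ref{lem5.1} using $B^{S,C_l}$ as the copula lower bound to deduce
\[
\text{VaR}_\epsilon^{\mathbb{P}_{\pmb{\theta}}}(\pmb{v}^T\tilde{\pmb{\xi}})\ \le\ \sum_{i=1}^d v_i\,\text{VaR}_{1-\delta_W^{-1}(1-\epsilon)}^{\mathbb{P}_{\pmb{\theta}_i}}(\tilde{\xi}_i),
\]
exactly as in the proof of Theorem \ref{thm5.2}, then take the supremum over $\pmb{\theta}\in\Theta({\mathcal{S}^N},\alpha)$ (which decomposes marginal-by-marginal since $\Theta({\mathcal{S}^N},\alpha)$ is the product of the per-coordinate credible sets at level $\alpha/d$). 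This yields the stated support function $\delta^*(\pmb{v}|\Xi^C({\mathcal{S}^N},\epsilon,\alpha))$, and invoking Theorem \ref{thm3.1} produces the BDRCC implication and finishes the proof.

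The main obstacle is the diagonal-evaluation step: one must verify carefully that the hypothesis $S\cap[1-\epsilon,1]^d=\emptyset$ is strong enough to make the sup over $a\in S$ in $B^{S,C_l}(u^\star,\ldots,u^\star)$ harmless. The argument above works because at the diagonal one can throw away all but one of the negative $(a_i-u^\star)_+$ terms and still exploit $C_l\le M$, but one should double-check that this monotone-envelope reasoning is valid over the full range $u\ge u^\star$ (not merely at $u^\star$) so that $\delta_{B^{S,C_l}}^{-1}(1-\epsilon)$ is unambiguously $1-\epsilon/d$; the other steps (passing from the marginal sup to the claimed support function, and invoking Theorem \ref{thm3.1}) are essentially bookkeeping.
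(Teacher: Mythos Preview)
Your proposal is correct and follows essentially the same route as the paper. The only cosmetic difference is that the paper obtains the bound $\sup_{a\in S}\{C_l(a)-\sum_i(a_i-u)_+\}\le \sup_{a\in S}C_l(a)<1-\epsilon$ in one stroke (dropping all the subtracted nonnegative terms at once) and then cites an external result (Theorem~3.4 in \cite{main3}) to conclude that $\delta_{B^{S,C_l}}(u)\ge 1-\epsilon$ forces $\delta_{B^{S,C_l}}(u)=\delta_W(u)$, whereas you compute the diagonal point $u^\star=1-\epsilon/d$ explicitly and check the identity there. Your self-flagged ``obstacle'' about the full range $u\ge u^\star$ dissolves by the same observation: the inequality $\sup_{a\in S}\{C_l(a)-\sum_i(a_i-u)_+\}<1-\epsilon$ holds for \emph{every} $u$, not just $u^\star$, so wherever $\delta_{B^{S,C_l}}(u)\ge 1-\epsilon$ the max in $B^{S,C_l}$ is achieved by $W$, giving $\delta_{B^{S,C_l}}^{-1}(1-\epsilon)=\delta_W^{-1}(1-\epsilon)$ unambiguously.
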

\begin{proof}
    Since $S \cap [1-\epsilon, 1]^d=\emptyset$, there exists at least one $i$ such that  $u_i <1-\epsilon$ for all $u \in S$. Then $C_l(u) \leq M(u) =\min_{1\leq i\leq d}{u_i}<1-\epsilon$ and we have
 \[\sup_{a\in S}\left\{C_l(a)-\sum_{i=1}^d(a_i-u)_+\right\}\leq\sup_{a\in S}\{C_l(a)\}<1-\epsilon.\]
 It is known from Theorem 3.4 in \cite{main3} that the inequality $\delta_{B^{S,C_l}}(u)\geq1-\epsilon$ can only be satisfied at those values of $u $ such that 
 \[\delta_{B^{S,C_l}}(u)=(nu-n+1)_+=\delta_{W}(u).\]
 Applying Lemma \ref{lem5.1} with $C_l =B^{S,C_l}$ gives us that
 \begin{equation}
 	\text{VaR}_\epsilon^{\mathbb{P}_{\pmb{\theta}}}(\pmb{v}^T\tilde{\pmb{\xi}})\leq\sum_{i=1}^dv_iF_{\pmb{\theta}_i}^{-1}(\epsilon^\star)=\sum_{i=1}^dv_i\text{VaR}_{\epsilon^\star}^{\mathbb{P}_{\pmb{\theta}_i}}(\tilde{\xi}_i),\label{4.3}
 \end{equation}
 where $\epsilon^*=1-\delta_{W}(1-\epsilon)$.
 
We can deduce from (\ref{4.3}) that
 \[\sup_{\pmb{\theta}\in\Theta({\mathcal{S}^{N}},\alpha)}\text{VaR}_\epsilon^{\mathbb{P}_{\pmb{\theta}}}(\pmb{v}^T\tilde{\pmb{\xi}})\leq\sum_{i=1}^d\sup_{\pmb{\theta}_i\in\Theta_{i}({\mathcal{S}^{N}},\alpha/d)}v_i\text{VaR}_{1-\delta_{W}^{-1}(1-\epsilon)}^{\mathbb{P}_{\pmb{\theta}_i}}(\tilde{\xi}_i).\]
 
 With Theorem \ref{thm3.1}, we can ensure that $\delta^*(\pmb{v}|\Xi^C({\mathcal{S}^{N}},\epsilon,\alpha))$ is the support function of $\Xi({\mathcal{S}^{N}},\epsilon,\alpha)$ immediately.
\end{proof}
\subsection{General Marginal Distributions}
In this part, we do not make any assumption about the dependence among the components. This general case can reflect the complex circumstances like that we cannot learn any correlation information about marginal distributions $\mathbb{P}_{\pmb{\theta}_i}$ from the data. This situation occurs in the case of missing data as \cite{main} points out.

\begin{theorem}\label{thm5.4}
     For the random vector $(\tilde{\xi}_1,\cdots,\tilde{\xi}_d)$ with the cumulative distribution $\mathbb{P}_{\pmb{\theta}}$ and marginal distributions $\mathbb{P}_{\pmb{\theta}_i}$, $1\leq i\leq d$. Then
\[\Xi^M({\mathcal{S}^{N}},\epsilon,\alpha)=\left\{\pmb{\xi}\in\mathbb{R}^d:\xi_i=\text{CVaR}_\epsilon^{\mathbb{P}_{\pmb{\theta}_i}}(\tilde{\xi}_i),\ \pmb{\theta}_i\in\Theta_{i}({\mathcal{S}^{N}},\alpha/d),\ i=1,\cdots,d\right\}\]
implies the BDRCC (\ref{BDRCC1}) for all $0<\epsilon<1$.
Moreover,
\[\delta^*(\pmb{v}|\Xi^M({\mathcal{S}^{N}},\epsilon,\alpha))=\max_{\pmb{\theta}\in\Theta({\mathcal{S}^{N}},\alpha)}\sum_{i=1}^dv_i\text{CVaR}_\epsilon^{\mathbb{P}_{\pmb{\theta}_i}}(\tilde{\xi}_i).\]
\end{theorem}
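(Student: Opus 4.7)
The plan is to follow the recipe of Algorithm 1 with the positive-homogeneous upper bound
\[\pmb{B}(\pmb{\theta},\epsilon)=\bigl(\text{CVaR}_\epsilon^{\mathbb{P}_{\pmb{\theta}_1}}(\tilde{\xi}_1),\ldots,\text{CVaR}_\epsilon^{\mathbb{P}_{\pmb{\theta}_d}}(\tilde{\xi}_d)\bigr),\]
so that $\Xi^M({\mathcal{S}^N},\epsilon,\alpha)=\{\pmb{B}(\pmb{\theta},\epsilon):\pmb{\theta}\in\Theta({\mathcal{S}^N},\alpha)\}$ fits directly in the schema, and then to invoke Theorem \ref{thm3.1} to conclude that it implies the BDRCC (\ref{BDRCC1}).

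First I would establish the key worst-case VaR inequality: for any joint distribution $\mathbb{P}_{\pmb{\theta}}$ whose marginals are $\mathbb{P}_{\pmb{\theta}_i}$ and any $\pmb{v}\in\mathbb{R}^d$,
\[\text{VaR}_\epsilon^{\mathbb{P}_{\pmb{\theta}}}(\pmb{v}^T\tilde{\pmb{\xi}})\leq\text{CVaR}_\epsilon^{\mathbb{P}_{\pmb{\theta}}}(\pmb{v}^T\tilde{\pmb{\xi}})\leq\sum_{i=1}^dv_i\,\text{CVaR}_\epsilon^{\mathbb{P}_{\pmb{\theta}_i}}(\tilde{\xi}_i).\]
The first inequality is the elementary VaR$\le$CVaR relation. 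The second uses the fact that CVaR is a coherent risk measure, hence subadditive and positively homogeneous, so it is dominated by its comonotone coupling, which equals the sum of the marginal CVaRs and depends only on the univariate marginals, not on the (unknown) dependence structure. Taking the supremum over $\pmb{\theta}\in\Theta({\mathcal{S}^N},\alpha)$ on both sides yields
\[\sup_{\pmb{\theta}\in\Theta({\mathcal{S}^N},\alpha)}\text{VaR}_\epsilon^{\mathbb{P}_{\pmb{\theta}}}(\pmb{v}^T\tilde{\pmb{\xi}})\leq\sup_{\pmb{\theta}\in\Theta({\mathcal{S}^N},\alpha)}\sum_{i=1}^dv_i\,\text{CVaR}_\epsilon^{\mathbb{P}_{\pmb{\theta}_i}}(\tilde{\xi}_i).\]

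Next I would read off the support function. Because $\Theta({\mathcal{S}^N},\alpha)=\otimes_{i=1}^d\Theta_i({\mathcal{S}^N},\alpha/d)$ is a Cartesian product and the $i$-th coordinate of $\pmb{B}(\pmb{\theta},\epsilon)$ depends only on $\pmb{\theta}_i$, the maximization of $\pmb{v}^T\pmb{\xi}$ over $\Xi^M$ separates across coordinates, giving exactly
\[\delta^*(\pmb{v}|\Xi^M({\mathcal{S}^N},\epsilon,\alpha))=\max_{\pmb{\theta}\in\Theta({\mathcal{S}^N},\alpha)}\sum_{i=1}^dv_i\,\text{CVaR}_\epsilon^{\mathbb{P}_{\pmb{\theta}_i}}(\tilde{\xi}_i),\]
which, combined with the previous display, verifies the hypothesis $\delta^*(\pmb{v}|\Xi^M)\geq\sup_{\pmb{\theta}}\text{VaR}_\epsilon^{\mathbb{P}_{\pmb{\theta}}}(\pmb{v}^T\tilde{\pmb{\xi}})$ of Theorem \ref{thm2.1}. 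Convexity and closedness of $\Xi^M$ follow from its representation as the image of the compact convex set $\Theta({\mathcal{S}^N},\alpha)$ under the componentwise CVaR map, so Theorem \ref{thm3.1} then finishes the proof.

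The main obstacle is the middle step, namely ensuring that the CVaR subadditivity bound $\text{CVaR}_\epsilon(\pmb{v}^T\tilde{\pmb{\xi}})\le\sum v_i\,\text{CVaR}_\epsilon^{\mathbb{P}_{\pmb{\theta}_i}}(\tilde{\xi}_i)$ holds with no assumption on the copula and for arbitrary (possibly signed) $\pmb{v}$. Coherence immediately handles the nonnegative case via $\text{CVaR}_\epsilon(v_i\tilde{\xi}_i)=v_i\,\text{CVaR}_\epsilon(\tilde{\xi}_i)$ for $v_i\geq0$; for general $\pmb{v}$ one should either restrict to the positive orthant (as is typical for portfolio-style constraints) or invoke the known fact that the comonotone coupling achieves the supremum of $\text{VaR}_\epsilon(\sum v_i\tilde{\xi}_i)$ over all couplings with fixed marginals, which still reduces to the stated sum via the Fréchet–Hoeffding upper bound.
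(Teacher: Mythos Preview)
Your overall strategy---define $\pmb{B}(\pmb{\theta},\epsilon)$ as the vector of marginal CVaRs, read off the support function, and invoke Theorem~\ref{thm3.1}---is exactly the paper's. The only substantive difference is in how you justify the key inequality
\[
\text{VaR}_\epsilon^{\mathbb{P}_{\pmb{\theta}}}(\pmb{v}^T\tilde{\pmb{\xi}})\leq\sum_{i=1}^d v_i\,\text{CVaR}_\epsilon^{\mathbb{P}_{\pmb{\theta}_i}}(\tilde{\xi}_i).
\]
Your primary route (VaR$\le$CVaR on the original joint law, then subadditivity plus positive homogeneity) indeed breaks for negative $v_i$, as you flag: coherence gives $\text{CVaR}_\epsilon(v_i\tilde{\xi}_i)=|v_i|\,\text{CVaR}_\epsilon(\text{sign}(v_i)\tilde{\xi}_i)$, not $v_i\,\text{CVaR}_\epsilon(\tilde{\xi}_i)$. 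The paper instead cites Theorem~2.1 of \cite{main2}, which bounds $\text{VaR}_\epsilon^{\mathbb{P}_{\pmb{\theta}}}(\sum_i v_i\tilde{\xi}_i)$ directly by $\text{CVaR}_\epsilon(\sum_i v_i Y_i)$ with $Y_i=F_{\pmb{\theta}_i}^{-1}(U)$ the comonotone copies, and then splits that sum. This is precisely the ``comonotone coupling'' alternative you mention at the end; so your fallback is the paper's actual argument, and your main line is a shortcut that works cleanly only for $\pmb{v}\ge 0$.

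One minor point: the claim that $\Xi^M$ is convex because it is the image of a compact convex set under the CVaR map is not automatic (images of convex sets under nonlinear maps need not be convex). The paper does not address this either and simply appeals to Theorem~\ref{thm3.1}, so you are in the same position; but it is worth noting that the convexity required by Theorem~\ref{thm2.1} is being assumed rather than verified.
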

\begin{proof}
    It is known from Theorem 2.1 in \cite{main2} that
\[\text{VaR}_\epsilon^{\mathbb{P}_{\pmb{\theta}}}(\sum_{i=1}^dv_i\tilde{\xi}_i)\leq \text{CVaR}_\epsilon^{\mathbb{P}_{\pmb{\theta}}}(\sum_{i=1}^dv_iY_i),\ Y_i=F_{\pmb{\theta}_i}^{-1}(U),\]
where $U$ is an $U(0, 1)$-distributed random variable. This bound implies the following worst-case upper bound
\[\sup_{\pmb{\theta}\in\Theta({\mathcal{S}^{N}},\alpha)}\text{VaR}_\epsilon^{\mathbb{P}_{\pmb{\theta}}}(\sum_{i=1}^dv_i\tilde{\xi}_i)\leq\sup_{\pmb{\theta}\in\Theta({\mathcal{S}^{N}},\alpha)}\text{CVaR}_\epsilon^{\mathbb{P}_{\pmb{\theta}}}(\sum_{i=1}^dv_iY_i).\]
From which we can get 
\[\sup_{\pmb{\theta}\in\Theta({\mathcal{S}^{N}},\alpha)}\text{VaR}_\epsilon^{\mathbb{P}_{\pmb{\theta}}}(\sum_{i=1}^dv_i\tilde{\xi}_i)\leq\sum_{i=1}^d\sup_{\pmb{\theta}_i\in\Theta_{i}({\mathcal{S}^{N}},\alpha/d)}v_i\text{CVaR}_\epsilon^{\mathbb{P}_{\pmb{\theta}_i}}(Y_i).\]
 With Theorem \ref{thm3.1}, we can ensure that $\delta^*(\pmb{v}|\Xi^M({\mathcal{S}^{N}},\epsilon,\alpha))$ is the support function of $\Xi({\mathcal{S}^{N}},\epsilon,\alpha)$.
\end{proof}
 
 As an application of Theorem \ref{thm5.4}, we consider the following example. 
 
\begin{example}(Ambiguity set for Gamma marginal distributions)
    Suppose that $\mathbb{P}_{\pmb{\theta}^c}$ has components following the distribution $G_{a_i^c,s_i^c}= Gamma(a^c_i, s^c_i)$, with the density function being
\[g_{a^c_i, s^c_i}=\frac{1}{{s^c_i}^{a^c_i}\Gamma(a^c_i)}\xi_i^{a^c_i-1}e^{-\frac{\xi_i}{s^c_i}},\ \ \xi_i\geq0.\]
For a random variable $\tilde{\xi}_i$ following the distribution $\mathbb{P}_{\pmb{\theta}_i}= G_{a_i,s_i}$ where $\pmb{\theta}_{i}=(a_i,s_i)$, it is elementary to show that 
\[\text{CVaR}_\epsilon^{{\mathbb{P}_{\pmb{\theta}_i}}}(\tilde{\xi}_i)=\frac{s_i}{1-\epsilon}\frac{\Gamma(a_i+1)}{\Gamma(a_i)}\left(\bar{G}_{a_i+1,s_i}\left(G_{a_i,s_i}^{-1}(\epsilon)\right)\right)\]
with $\bar{G}_{a_i,s_i}=1-G_{a_i,s_i}$.

Hence, we obtain that 
 \begin{equation}
\sup_{\pmb{\theta}\in\Theta({\mathcal{S}^{N}},\alpha)}\text{VaR}_\epsilon^{\mathbb{P}_{\pmb{\theta}}}(\sum_{i=1}^dv_i\tilde{\xi}_i)\leq\sum_{i=1}^d\sup_{\pmb{\theta}_i\in\Theta_{i}({\mathcal{S}^{N}},\alpha/d)}v_i\left\{\frac{s_i}{1-\epsilon}\frac{\Gamma(a_i+1)}{\Gamma(a_i)}\left(\bar{G}_{a_i+1,s_i}\left(G_{a_i,s_i}^{-1}(\epsilon)\right)\right)\right\}\label{4.4}
\end{equation}
and
\[\Theta_{i}({\mathcal{S}^{N}},\alpha/d)=\{\pmb{\theta}_i:||[I(\hat{\pmb{\theta}}_i)]^{1/2}  (\pmb{\theta}_i - \hat{\pmb{\theta}}_i)||_2\leq z_{1-\frac{\alpha}{2d}}\},\]
where  $\hat{\pmb{\theta}}_i=(\hat{a}_i,\hat{s})$, $\hat{a}_i$ and $\hat{s}$ are  the posterior modes of $a_i$ and $s$, respectively.

Based on (\ref{4.4}), we can further obtain that
\begin{equation*}
    \begin{split}
        \Xi^M({\mathcal{S}^{N}},\epsilon,\alpha)=\bigg\{\pmb{\xi}\in\mathbb{R}^d:\xi_i=\frac{s_i}{1-\epsilon}\frac{\Gamma(a_i+1)}{\Gamma(a_i)}\left(\bar{G}_{a_i+1,s_i}\left(G_{a_i,s_i}^{-1}(\epsilon)\right)\right),\\
        (a_i,s_i)\in\Theta_{i}({\mathcal{S}^{N}},\alpha/d),\ 1\leq i\leq d\bigg\}
    \end{split}
\end{equation*}
implies the BDRCC (\ref{BDRCC1}) for all $0<\epsilon<1$.
\end{example}

\section{Applications}
Notice that in our framework, the uncertainty set $\Xi({\mathcal{S}^{N}},\epsilon,\alpha)$ will always imply the BDRCC (\ref{BDRCC1}) for all $0<\epsilon<1$ because the proposed credible interval $\Theta({\mathcal{S}^{N}},\alpha)$ does not depend on $\epsilon$. This allows us to obtain better-performing uncertainty sets without making more assumptions than the existing literature. To further illustrate the advantages of our framework, we assess the performance of the distributionally robust optimization problem derived from our BDRCC on a portfolio selection problem and a queuing problem. 
\subsection{Portfolio Selection}
Portfolio management is an extensively studied class of problems in robust optimization. One can refer to \cite{review}  for a comprehensive review. From a chance constraint perspective (\cite{drccps}), a typical portfolio selection problem with our proposed BDRCC can be written as:
\begin{equation}
	\max_{\pmb{x},t}\left\{t:\inf_{\pmb{\theta}\in\Theta_{\mathcal{S}^N}}\mathbb{P}_{\pmb{\theta}}(\tilde{\pmb{\xi}}^T\pmb{x}\geq t)\geq1-\epsilon,\ \pmb{e}^T\pmb{x}=1,\ \pmb{x}\geq0\right\}.\label{new5.1}
\end{equation}
Here $\pmb{x}\in\mathbb{R}^d$ is the proportional portfolio vector and  $\tilde{\pmb{\xi}}\in\mathbb{R}^d$ is the random return vector of $d$ risky assets with the underlying distribution $\mathbb{P}_{\pmb{\theta}^c}$. The goal of problem (\ref{new5.1}) is to find an optimal portfolio $\pmb{x}$ such that in the worst-case with respect to $\Theta_{\mathcal{S}^N}$, one can get the highest possible lower bound $t$ on the portfolio return $\tilde{\pmb{\xi}}^T\pmb{x}$ with probability at least $1-\epsilon$.

Within our framework (\ref{1.5}) and Algorithm \ref{algo1}, the BDRCC problem (\ref{new5.1}) can be approximated by the following classical RO problem:
\begin{equation}
	\max_{\pmb{x}}\left\{\min_{\pmb{\xi}\in\Xi_\epsilon}\pmb{\xi}^T\pmb{x}:\pmb{e}^T\pmb{x}=1,\pmb{x}\geq0\right\},\label{5.1}
\end{equation}
where the uncertainty set $\Xi_\epsilon=\Xi({\mathcal{S}^{N}},\epsilon,\alpha)$ constructed by the framework in Section 3 will imply the BDRCC in (\ref{new5.1}). That is, the optimal value of problem (\ref{5.1}) will provide a conservative bound of return with probability at least $1-\alpha$  w.r.t. $\mathbb{P}_{\pmb{\theta}^c}$ for the optimal portfolio $\pmb{x}^*$ of problem (\ref{new5.1}). 

To demonstrate the practical value of problem (\ref{5.1}), we first show the superiority of our Bayesian type uncertainty set $\Xi({\mathcal{S}^{N}},\epsilon,\alpha)$ by comparing it with the uncertainty sets derived in  \cite{main,CS}.
\subsubsection{Portfolio selection with simulated data}
In order to make comparisions on the same basis, we consider $d = 20$ assets with the same setting as that in \cite{main,CS}. Specifically, we assume that the return of each asset follows the following binomial distribution:
\begin{equation}
	 \tilde{\xi}_i=\left\{
\begin{split}
& \frac{\sqrt{(1-\theta_i)\theta_i}}{\theta_i}\ \ &\text{with probability }\theta_i,\\
& -\frac{\sqrt{(1-\theta_i)\theta_i}} {1-\theta_i}& \text{with probability }1-\theta_i.
\end{split}
\right.\label{bio}
\end{equation}

It is easy to see that all the assets have the same mean return,0\%, and  the same standard deviation, 1.00\%. However,  the different parameters $\theta_i$, $i=1,\cdots,d$ reflect different degrees of asymmetry of individual return distributions.  Here the $\theta_i$ is chosen as
\[\theta_i=\frac{1}{2}(1+\frac{i}{d+1})\ \ i=1,\cdots,d.\]
Therefore, a higher index $i$ results in a larger loss with a smaller probability of the corresponding risky asset.

As an illustration, we consider two kinds of uncertainty sets $\Xi^I({\mathcal{S}^{N}},\epsilon,\alpha)$ and $\Xi^M({\mathcal{S}^{N}},\epsilon,\alpha)$, denoted as $\Xi^I$ and $\Xi^M$ for brevity,  derived in Theorem \ref{thm5.1} and Theorem \ref{thm5.4} respectively.  To compare with the existing methods, we will also consider the uncertainty set motivated by forward and backward deviations, denoted as $\Xi^{I,BG}$, in \cite{main} and the uncertainty sets $\Xi^{M,BG}$  and $\Xi^{M,CS}$ built from marginal samples in \cite{main} and  \cite{CS}, respectively. These methods all aim at constructing an uncertainty set which can imply the DRCC at level $1-\epsilon$ with a high probability $1-\alpha$.  The degree of credibility $\alpha$ and the level for probability guarantee $\epsilon$ are set as $\alpha=\epsilon=10\%$ in this experiment. 

We first compare $\Xi^I$ and $\Xi^{I,BG}$. Figure 1 depicts the sets $\Xi^I$ (blue ones) and $\Xi^{I,BG}$ (red ones) constructed under different numbers of samples, $N$. For reference, we also plot the set based on true distribution  $\mathbb{P}_{{\pmb{\theta}}^c}$ (black dot) in this example. To visualize our results, we here select two components $\tilde{\xi}_1$ and $\tilde{\xi}_{20}$ for presentation.
\begin{figure}[H]  
	\center{\includegraphics[width=10cm]  {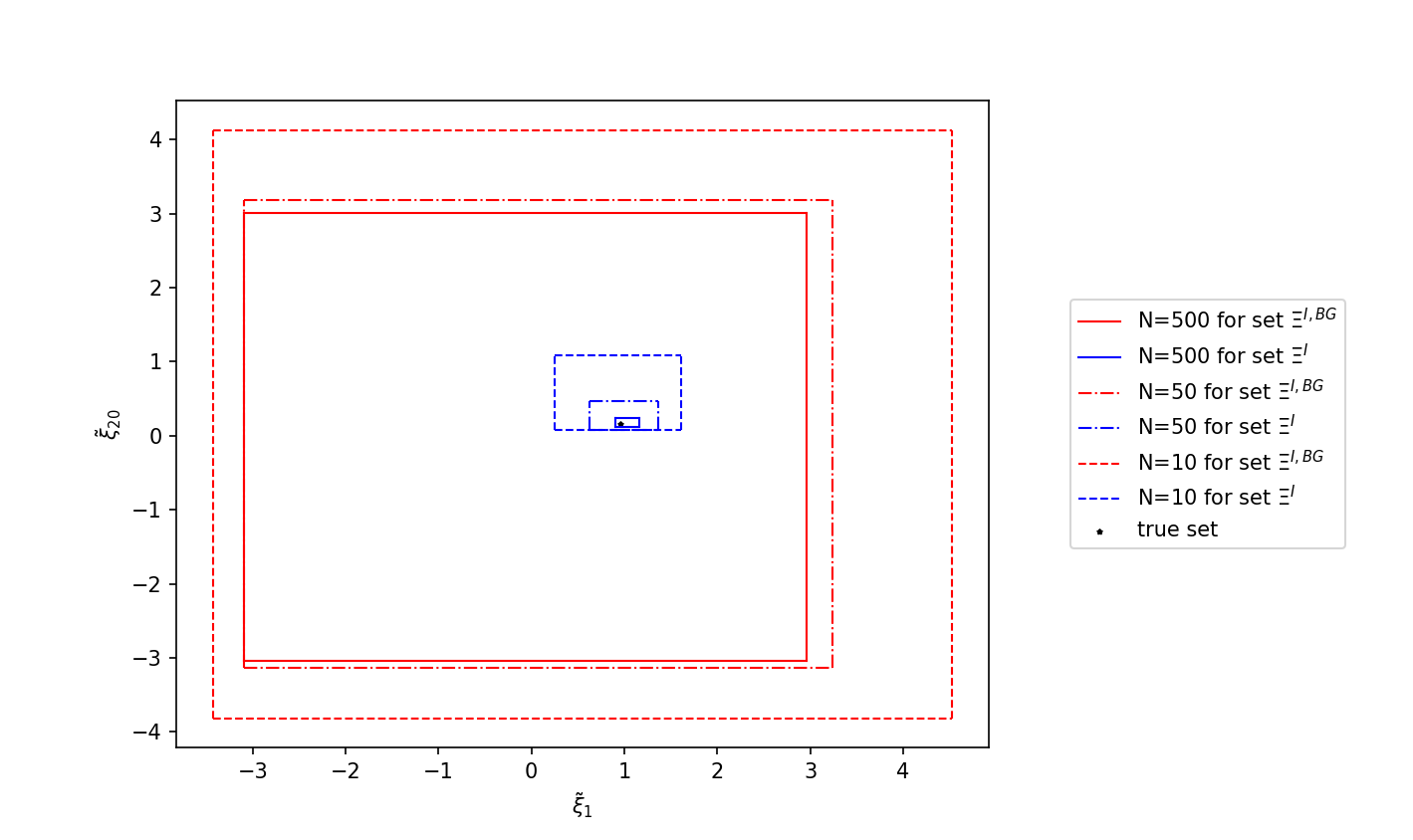}}   
	\renewcommand{\figurename}{Figure}
	\caption{\label{1} The uncertainty sets $\Xi^I$ and $\Xi^{I,BG}$ constructed with different sample sizes}   
\end{figure}
We also compare $\Xi^M$ with $\Xi^{M,BG}$ and $\Xi^{M,CS}$. This situation is considered to reflect the case without any prior knowledge about the dependence of marginal distributions. Similarly, we construct the sets  $\Xi^M$ (blue ones), $\Xi^{M,BG}$ (red ones) and  $\Xi^{M,CS}$ (green ones) for different $N$s, the number of samples. We also plot the set based on true distribution  $\mathbb{P}_{{\pmb{\theta}}^c}$ (black dot) in this example. The results are displayed in Figure 2, again with respect to two components $\tilde{\xi}_1$ and $\tilde{\xi}_{20}$ for presentation.

\begin{figure}[H]  
	\center{\includegraphics[width=14cm]  {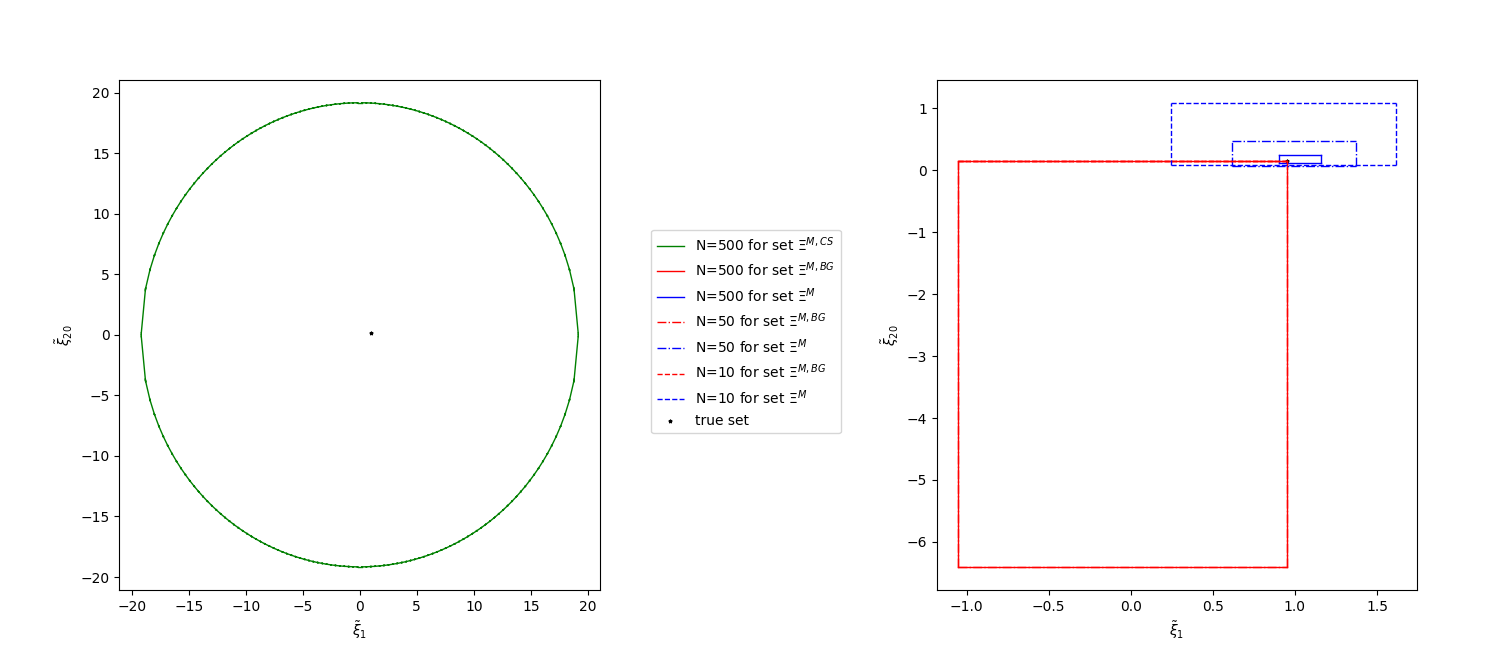}}   
	\renewcommand{\figurename}{Figure}
	\caption{\label{1} The uncertainty sets $\Xi^M$, $\Xi^{M,BG}$ and $\Xi^{M,CS}$ constructed with different sample sizes}   
\end{figure}

Figures 1 and 2 show that the uncertainty set constructed with our schema would converge to the true uncertainty set with the increase of the sample size. Nevertheless, we cannot get the set $\Xi^{M,CS}$ when $N=10$ and $N=50$ for the reason that the approach in \cite{CS} is only applicable for the large data set, otherwise, the volume of the constructed uncertainty set $\Xi^{M,CS}$ will become infinity. Meanwhile, the uncertainty set $\Xi^{M,BG}$ constructed for $\tilde{\xi}_1$ and $\tilde{\xi}_{20}$ when $N=10$ is 
\[\{(\xi_{1},\xi_{20}):-1.0488\leq\xi_{1}\leq0.9534,\ \xi_{20}=  0.1561\},\]
whereas the corresponding uncertainty set $\Xi^{M,BG}$  when $N=50$ and $N=500$ is
\[\{(\xi_{1},\xi_{20}):-1.0488\leq\xi_{1}\leq0.9534,\ -6.4031\leq\xi_{20}\leq 0.1561\}.\] 
This is intuitively unacceptable. As the sample size increases, our knowledge or information about the true distribution should be enriched. Therefore, the uncertainty set should become smaller with a larger sample size.  However, the above uncertainty set $\Xi^{M,BG}$ for $N=50$ or $N=500$ is larger than that for $N=10$. These phenomena reflect the fact that most existing data-driven methods for constructing uncertainty sets are built on the premise of a large enough data set. The reason for the above unreasonable results is that the approaches in \cite{main} and \cite{CS} cannot construct a reasonable ambiguity set of distributions which can contain the underlying true distribution $\mathbb{P}_{\pmb{\theta}^c}$ with high probability in a proper way.

Further dissecting the results in Figure 1 and Figure 2, we have the following observations: 

$\bullet$ Our Bayesian-type method can construct significantly smaller uncertainty sets than those of the traditional DRCC methods, under the same finite-sample probability guarantee. This is because existing researches lack a good significance test for general distributions, resulting in overly conservative uncertainty sets.

$\bullet$ It can be seen from Figures 1 and 2 that for existing approaches, different methods are required to construct uncertainty sets under different circumstances. However, from a Bayesian perspective, we convert distribution uncertainty to parameter uncertainty, and construct uncertainty sets for parameters, which provides a generic framework for constructing uncertainty sets under different distribution assumptions. Additionally, for our method, the uncertainty set obtained under independent marginal distributions (Figure 1) is a subset of the set obtained under the dependence case (Figure 2), which cannot be realized by existing approaches.

$\bullet$ Our method is applicable for both large and small sample cases and shows excellent performance constantly. However, most existing data-driven methods are either unsuitable for small samples or exhibit unreasonable performance.

Furthermore, with the constructed convex uncertainty sets, it is easy to solve the RO optimization problem (\ref{5.1}). We generate two in-sample data sets with the distribution (\ref{bio}) for sizes $N=500$ and $N=2000$, respectively. To test the performance of the optimal portfolios obtained by different methods with the in-sample data set, we generate the out-of-sample data set with the distribution (\ref{bio}) for size $N=50$. To ensure that our numerical results are not specific to a particular set of samples, we repeat the solution procedure under each sample size 100 times. The results are shown in Table 1 and Table 2, where I(I,BG) stands for our method (the method in \cite{main}) for the independent marginal distribution case, M(M,CS and M,BG) stands for our method (the method in \cite{CS} and \cite{main}, respectively) for the dependent marginal distribution case. Column $v_{in}$ shows the average in-sample optimal value of problem (\ref{5.1}), while column $v_{out}$ shows the average out-of-sample 10\% worst-case return of the obtained optimal portfolio. As $\Xi^{I}$ is a subset of $\Xi^{M}$, the in-sample 10\% worst-case return of I in Table 1 is better than that of M in Table 2. 

Based on the results presented in Tables 1 and 2, it is clear that $v_{out}$ under both I and M outperform that of the comparing methods for the out-of-sample 10\% worst-case return. We note that the performance of $\Xi^{M,BG}$ depends slightly on $N$. Except for the M,BG case, the performance of other methods becomes better in terms of both $v_{in}$ and $v_{out}$ when the sample size $N$ increases. As mentioned in \cite{main}, the in-sample optimal value $v_{in}$ provides a loose bound, we prefer to select the best uncertainty set with respect to the out-of-sample performance $v_{out}$.  Therefore, given that the uncertainty set constructed by our method consistently leads to a larger $v_{out}$, we assert that our method can generate a better, less conservative uncertainty set, and thus can find a better, robust optimal portfolio.
\begin{table}[]
			\centering
	 
	\caption{Portfolio performance under different methods for independent returns}
	\setlength{\tabcolsep}{8mm}{
	\begin{tabular}{@{}lllll@{}}
		\toprule
		& \multicolumn{2}{c}{$N=500$}                          & \multicolumn{2}{c}{$N=2000$}                          \\ \cmidrule(l){2-5} 
		& \multicolumn{1}{c}{$v_{in}$} & \multicolumn{1}{c}{$v_{out}$} & \multicolumn{1}{c}{$v_{in}$} & \multicolumn{1}{c}{$v_{out}$} \\ \midrule
		I,BG & -1.1925                & -1.1273                 & -1.3734                 & -1.0818                 \\
		I    & -1.0622                & -1.0261                 & -1.0529                 & -0.9803                 \\ \bottomrule
	\end{tabular}}
\end{table}
\begin{table}[]
			\centering
	 
	\caption{Portfolio  performance under different methods for dependent returns}
	\setlength{\tabcolsep}{8mm}{
	\begin{tabular}{@{}lllll@{}}
		\toprule
		& \multicolumn{2}{c}{$N=500$}                          & \multicolumn{2}{c}{$N=2000$}                          \\ \cmidrule(l){2-5} 
		& \multicolumn{1}{c}{$v_{in}$} & \multicolumn{1}{c}{$v_{out}$} & \multicolumn{1}{c}{$v_{in}$} & \multicolumn{1}{c}{$v_{out}$} \\ \midrule
		M,BG & -1.0488                & -1.0488                 & -1.0488                 & -1.0488                 \\
		M,CS & -1.2293                & -1.0813                 & -1.1704                 & -1.0085                 \\
		M    & -1.0633                & -1.0090                 & -1.0539                 & -0.9797                 \\ \bottomrule
	\end{tabular}}
\end{table}

\subsubsection{Portfolio selection with real trading data}
From a technical perspective, the previous example with simulated data demonstrated that our approach is superior to typical approaches when it comes to approximating DRCC. However, real-world situations can be more complex and unpredictable. To further showcase the effectiveness of our method, we now consider a more sophisticated portfolio selection problem with real trading data.

We consider a portfolio selection problem with 14 stocks from different sectors in the USA stock market, including technology, financials, and pharmaceuticals. The concrete stock pool includes Visa, JPM, KO, JNJ, PG, INTC, AMZN, META, AAPL, MSFT, BA, CMCSA, CVX, and PFE, whose random returns are denoted by $\tilde{\xi}_1,\tilde{\xi}_2,\cdots,\tilde{\xi}_{14}$.  To better model the multivariate dependency among the returns of these stocks, we adopt the Gaussian copula, similar to that in papers like \cite{copulas},  which is a widely used tool in practice and also provides a standard benchmark for evaluating portfolio performance.

	Considering the limited research on portfolio selection with copulas under the DRCC framework, we compare our approach (M) with two other methods: the sample average approximation approach to chance constrained portfolio selection problem (CCP) in \cite{chance} and the robust portfolio optimization (RPO) with copulas in \cite{copulas}.  One can refer to \cite{chance} and \cite{copulas} for the detailed algorithms used to solve the resulting CCP and RPO problems, respectively, due to space limitations.

In the following experiment, we set the probability guarantee level at $\epsilon=5\%$ and the degree of credibility $\alpha=10\%$.
We estimate the joint distribution using historical daily returns. To demonstrate the robustness of our approach and the effect of different amounts of historical return data, we adopt three groups of data to estimate the joint distribution, respectively: for a period of one year from March 25, 2021, to March 22, 2022, denoted $N=250$, a period of two years from March 30, 2020, to March 22, 2022, denoted as $N=500$ and a period of four years from April 1, 2018, to March 22, 2022, denoted as $N=1000$. To evaluate the effectiveness of our portfolio selection method, we select the S\&P 500 index as the benchmark and show the cumulative out-of-sample returns from March 23, 2022, to January 10, 2023, of the optimal portfolios got with different methods in Figures 3-5.
Moreover, to comprehensively compare the performances of optimal portfolios determined with different approaches, we show in Table 3 typical performance indexes including the in-sample optimal value and the out-of-sample optimal return in columns $r_{in}$ and $r_{out}$, the maximum drawdown rate in column MaxDrawdown and the Sharpe ratio in column Sharpe.
\begin{table}[]
		\centering
		\caption{Performance of optimal portfolios under different methods and sample sizes}
		\setlength{\tabcolsep}{4mm}{
\begin{tabular}{@{}cccccc@{}}
\toprule
                  sample size      & method & $r_{in}$    & $r_{out}$    & MaxDrawdown &  Sharpe \\ \midrule
\multirow{3}{*}{250}  	&	CCP   & 0.0019                              & -0.2155 & 0.2831         & 0.0520        \\
	&	RPO  & 0.0004                              & -0.1292 & 0.2289         & 0.0602        \\
	&	M    & -0.0100 & 0.0035  & 0.1386         & 0.0724        \\\midrule
\multirow{3}{*}{500}  	&	CCP   & 0.0013                             & -0.1892 & 0.2489         & 0.0547        \\
	& RPO  & 0.0007                             & -0.0744 & 0.2242         & 0.0654        \\
	&	M    & -0.0094& 0.0009  & 0.2441         & 0.0702        \\\midrule
\multirow{3}{*}{1000} &  CCP   & 0.0015    & -0.0174 & 0.1981         & 0.0695        \\
		& RPO   & -0.0003   & -0.0635 & 0.1865         & 0.0663        \\
		& M    & -0.0067 & 0.0302  & 0.1784         & 0.0725    \\\bottomrule
    benchmark    & S\&P500 &     -     & -0.1368 & 0.2270         & 0.0592         \\ \bottomrule
\end{tabular}}
\end{table}

\begin{figure}[h]  
	\center{\includegraphics[width=12cm]  {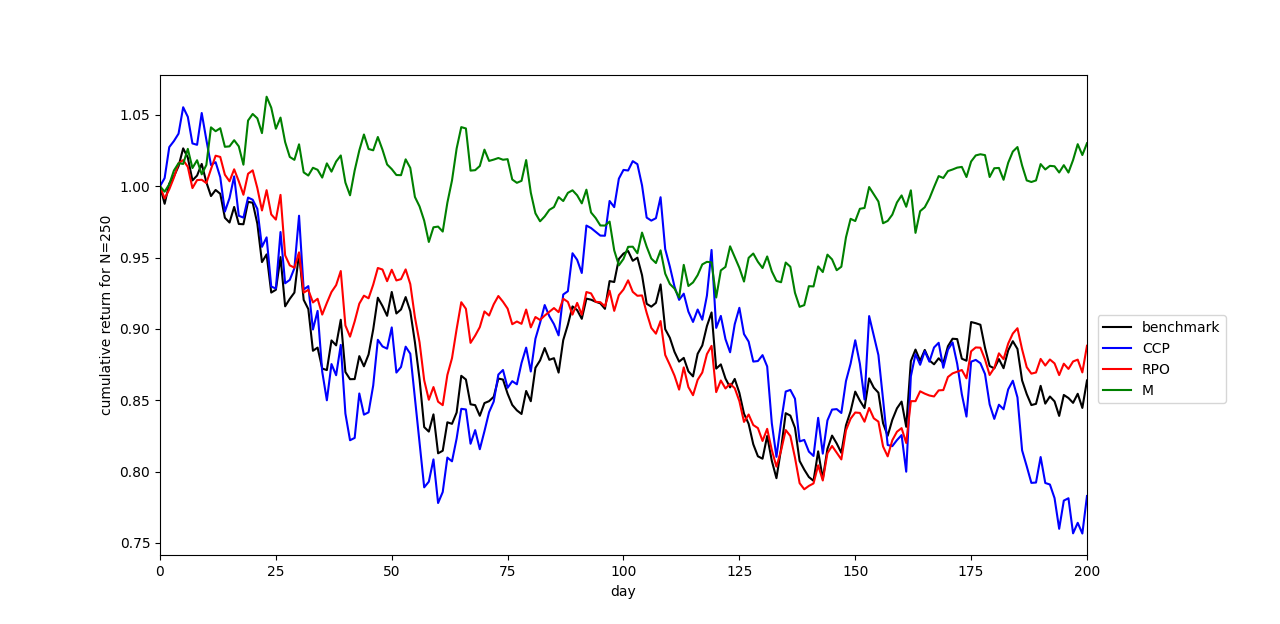}}   
	\renewcommand{\figurename}{Figure}
	\caption{\label{1}  Out-of-sample cumulative returns of optimal portfolios with different methods, $N=250$}   
\end{figure}
\begin{figure}[h]  
	\center{\includegraphics[width=12cm]  {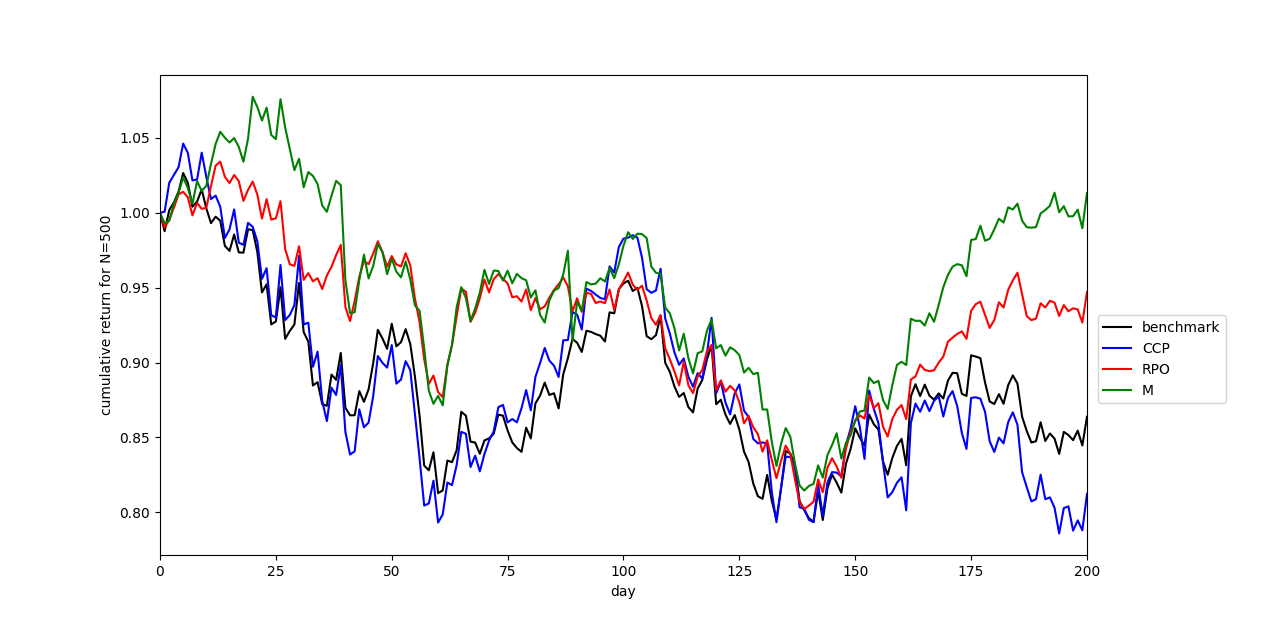}}   
	\renewcommand{\figurename}{Figure}
	\caption{\label{1}   Out-of-sample cumulative returns of optimal portfolios with different methods, $N=500$}   
\end{figure}
\begin{figure}[h]  
	\center{\includegraphics[width=12cm]  {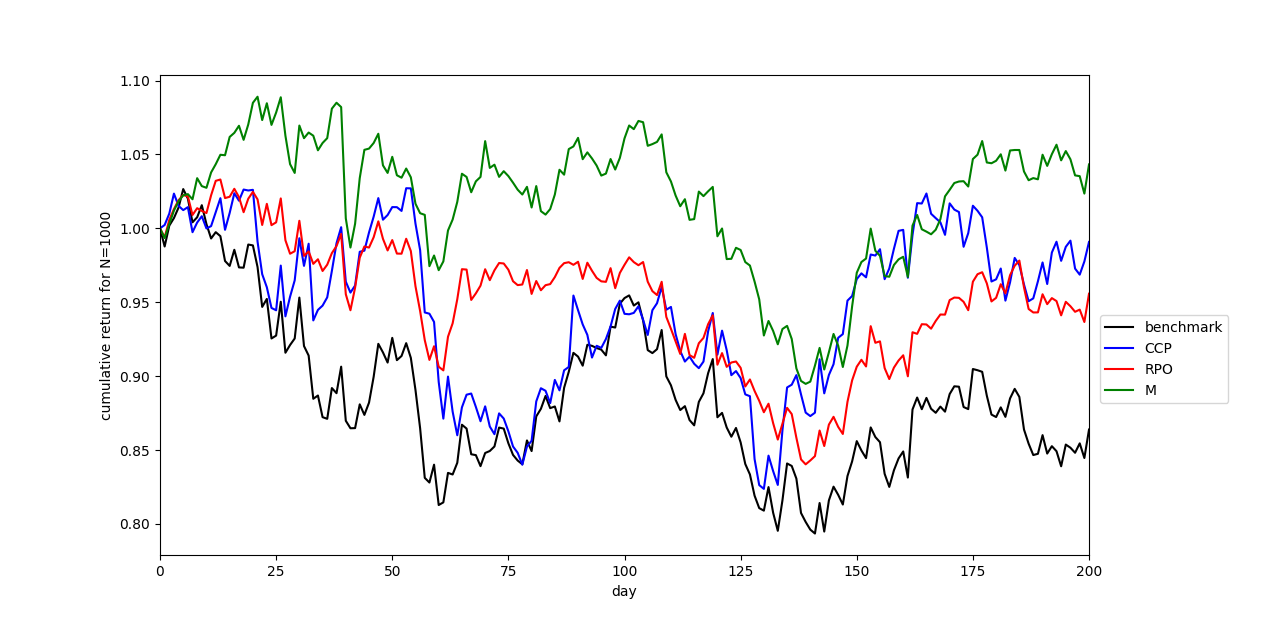}}   
	\renewcommand{\figurename}{Figure}
	\caption{\label{1}  Out-of-sample cumulative returns of optimal portfolios with different methods, $N=1000$}   
\end{figure}

Since each approach has its specific objective function, we do not examine the difference in optimal values $r_{in}$. First, it is obvious from Table 3 and Figures 3-5 that our method outperforms CCP, RPO and the benchmark S\&P 500 in terms of the out-of-sample return $r_{out}$ for all different historical data sizes. Second, as shown in Table 3, the optimal portfolio got with our method has a significantly higher Sharpe ratio than other methods and the benchmark, and, in most cases, our method can better control the downside risk in terms of the maximum drawdown rate.

Our approach, through an estimator of the BDRCC, can focus not only on its out-of-sample portfolio return but also on its control for tail risk. That is, it is important to note that the optimal value $r_{in}$ got with our method represents a lower bound of the optimal return with a probability at least $1-\epsilon$. We refer to the $\epsilon$ percentile for the optimal portfolio return as $r_\epsilon^*$. As an additional metric to show the reasonability and practical value of our method, we examine the difference between $r_{in}$ and $r_\epsilon^*$. To evaluate the deviation between the estimated lower bound $r_{in}$ and the true $\epsilon$ percentile $r_\epsilon^*$, we consider two indicators named deviation and relative deviation, respectively: $d=r_\epsilon^*-r_{in}$ and $D=\frac{r_\epsilon^*-r_{in}}{r_\epsilon^*}$. Obviously, the larger the value of $d$ $(D)$, the greater the error of the bound estimate would be.

Table 4 and Figure 6 show the deviation results of our BDRCC approach with respect to different degrees of credibility $\alpha$ and varying data sizes $N$.
\begin{table}[]
	\centering	 
	\caption{Deviations under different credible levels and data sizes}
	\setlength{\tabcolsep}{3mm}{
	\begin{tabular}{@{}ccccccc@{}}
		\toprule
		& \multicolumn{2}{c}{$N=250$}                & \multicolumn{2}{c}{$N=500$} & \multicolumn{2}{c}{$N=1000$} \\ \cmidrule(l){2-7} 
		& $d$                              & $D$    & $d$          & $D$      &$d$          & $D$      \\ \midrule
		M, $\alpha=100\%$ & 0.0089                         & 17.5966 & 0.0088      & 16.3362     & 0.0058      & 9.2738       \\
		M, $\alpha=50\%$  & 0.0101                         & 19.8641 & 0.0096      & 17.8244     & 0.0069      & 10.9560      \\
		M, $\alpha=10\%$  &0.0105 & 20.7648 & 0.0099      & 18.4155     & 0.0073      & 11.6242      \\
		M, $\alpha=5\%$   & 0.0107                         & 21.0620 & 0.0100      & 18.6105     & 0.0075      & 11.8447      \\
		M, $\alpha=1\%$   & 0.0110                         & 21.6680 & 0.0102      & 19.0083     & 0.0077      & 12.2943      \\ \bottomrule
	\end{tabular}}
\end{table}
\begin{figure}[h]  
	\center{\includegraphics[width=15cm]  {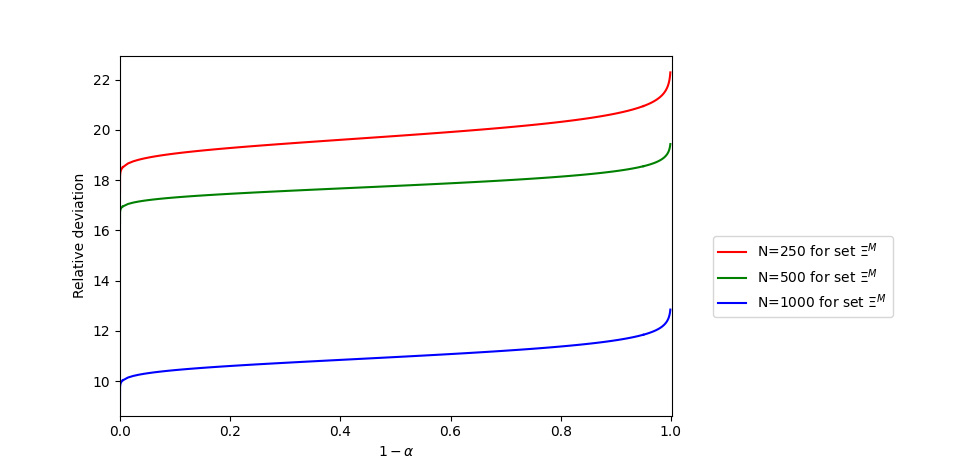}}   
	\renewcommand{\figurename}{Figure}
	\caption{\label{1}  Relative deviations  with different credible levels and data sizes}   
\end{figure}
The positive values in Table 4 mean that our method produces a return, at different credible levels $\alpha$, that is lower than the true percentile, which indicates that our approximation is always more conservative. Considering that different credible levels lead to uncertainty sets with different sizes, we compare the relative deviation of different bounds under different $\alpha$s and  $N$s in Figure 6. It can be seen from Figure 6 that with the decrease of $\alpha$, the ambiguity set of distributions becomes larger, and so does the resulting relative deviation $D$. Meanwhile, the relative deviation monotonically decreases with the increase of the historical data size.

It is also noteworthy that increasing the historical data size $N$ significantly enhances the performance of our method and the relative deviation $D$. The reason behind this is that, more samples enable us to obtain more information about the return distributions and achieve a better approximation of the return distributions, resulting in a better solution for our BDRCC problem. This again demonstrates the reasonability and practical value of our approach.
\subsection{Queue}
As another application of our schema, we study the behavior of the generated approximate BDRCC problem for the robust queueing model examined in \cite{ref4,ref14}. Actually, we will combine the methods therein with our new uncertainty set construction method to generate an upper bound on the waiting time of a queueing network. Specifically, we focus on the waiting time, as the objective function, in an M/M/1 queue.

Suppose  $\tilde{\xi}_i$ is a 2-dimensional  random variable whose components $(\tilde{x}_i,\tilde{t}_i)$ for $i = 1,\cdots,n$ stand for the uncertain service times and inter arrival times of the first $n$ customers in a queue. We assume that $\tilde{\xi}_i$ for $1\leq i\leq n$ are independent and the components $\tilde{x}_i$ and $\tilde{t}_i$ are also independent. Then the random waiting time of the $n$th customer can be recursively defined as (\cite{ref36})
\begin{equation}
	\tilde{W}_n=\max_{1\leq j\leq n}\left(\max\left(\sum_{l=j}^{n-1}\tilde{x}_l-\sum_{l=j+1}^{n}\tilde{t}_l,0\right)\right)=\max\left(0,\max_{1\leq j\leq n}\left(\sum_{l=j}^{n-1}\tilde{x}_l-\sum_{l=j+1}^{n}\tilde{t}_l\right)\right).\label{que}
\end{equation}

Bertsimas et al. \cite{ref4} consider the following worst-case for the waiting time
\begin{equation}
	\max\left(0,\max_{1\leq j\leq n}\max_{(\pmb{x},\pmb{t})\in\Xi}\left(\sum_{l=j}^{n-1}\tilde{x}_l-\sum_{l=j+1}^{n}\tilde{t}_l\right)\right).\label{5.2}
\end{equation}
Inspired by their study, we would like to demonstrate the excellence of our method by comparing the optimal values of problem (\ref{5.2}) with the uncertainty set $\Xi$ constructed by different methods.

Similar to many studies like \cite{queue2,queue1}, we assume that the uncertain service time follows an exponential distribution with parameter $\theta_1$ and the inter arrival time follows a Poisson distribution with parameter $\theta_2$. Thus the credible intervals for $\theta_1$ and $\theta_2$ can be easily computed as $\Theta_{\alpha',1}=[\theta_{l,1},\theta_{r,1}]$ and $\Theta_{\alpha',2}=[\theta_{l,2},\theta_{r,2}]$ where $\alpha'=1-\sqrt{1-\alpha}$,  $\theta_{l,i}=I(\hat{\theta}_i)^{-1/2}z_{1-\frac{\alpha'}{2}}-\hat{\theta}_i$ and 
$\theta_{r,i}=I(\hat{\theta}_i)^{-1/2}z_{1-\frac{\alpha'}{2}}+\hat{\theta}_i$, $i=1,2$. Applying our uncertainty set $\Xi^{I}({\mathcal{S}^{N}},{\bar{\epsilon}/n},\alpha)$ and Theorem \ref{thm5.1} to the inner maximization in (\ref{5.2}) leads to
\begin{equation}
	\max_{1\leq j\leq n}(\text{VaR}_{1-\sqrt[d]{1-\bar{\epsilon}/n}}^{\mathbb{P}_{{\theta}_{r,1}}}(\tilde{x})-\text{VaR}_{1-\sqrt[d]{1-\bar{\epsilon}/(n-1)}}^{\mathbb{P}_{{\theta}_{l,2}}}(\tilde{t}))(n-j),
\end{equation}
which has a closed form solution
\begin{equation}
W_n^{1,I}=(n-1)(\text{VaR}_{1-\sqrt[d]{1-\bar{\epsilon}/n}}^{\mathbb{P}_{{\theta}_{r,1}}}(\tilde{x})-\text{VaR}_{1-\sqrt[d]{1-\bar{\epsilon}/(n-1)}}^{\mathbb{P}_{{\theta}_{l,2}}}(\tilde{t})).
\end{equation}
Since each inner maximization in (\ref{5.2}) provides an upper bound to the corresponding $\tilde{x}_l-\tilde{t}_l$ at level $1 - \bar{\epsilon}/(n-1)$ for $\mathbb{P}_{\pmb{\theta}^c}$ with probability at least $1-\alpha$ with respect to $\mathbb{P}_{\mathcal{S}^{N}}$. With $n-1$ individual upper bounds, we can  derive the following BDRCC
\[\inf_{\pmb{\theta}\in\Theta_{\alpha',1}\times\Theta_{\alpha',2}}\mathbb{P}_{\pmb{\theta}}(\tilde{W}_n \leq W_n^{1,I}) \geq 1 - \bar{\epsilon}\]
for $\mathbb{P}_{\pmb{\theta}^c}$ with probability at least $1-\alpha$ with respect to $\mathbb{P}_{\mathcal{S}^{N}}$. This inequality also illustrates that our method will provide a conservative bound for (\ref{que}) with probability at least $1-\alpha$  w.r.t. $\mathbb{P}_{\pmb{\theta}^c}$.

For comparison, we also consider the solution of problem (\ref{5.2}) with the uncertainty set proposed in \cite{main}. In their study, the closed-form of the $1- \bar{\epsilon}$ quantile of the waiting time can be written as
\[W_n^{BG}\equiv\frac{(\log\frac{n}{\bar{\epsilon}})(\sigma_{f1}^2+\sigma_{b2}^2)}{2(m_{b2}-m_{f1})},\]
where $m_{b2}$, $m_{f1}$, $\sigma_{b2}$ and $\sigma_{f1}$ are the thresholds of the confidence region respectively. See \cite{main} for the details about these parameters.

Furthermore, we also apply the method in \cite{ref33} to determine the following bound on the $1-\bar{\epsilon}$ quantile of the waiting time:
\[W^{King}\equiv\frac{\hat{\mu}_x(\hat{\sigma}_{t}^2\hat{\mu}_{x}^2+\hat{\sigma}_{x}^2\hat{\mu}_{t}^2)}{2\bar{\epsilon}\hat{\mu}_t^2(\hat{\mu}_t-\hat{\mu}_x)},\]
where $\hat{\mu}_t$, $\hat{\sigma}_t^2$, $\hat{\mu}_x$ and $\hat{\sigma}_x^2$ denote the means and  variances of all arrival time samples and service time samples, respectively.

To characterize the M/M/1 queue concretely, we let the service time follow an exponential distribution with $\theta_{1}=2$ and the interarrival times follow a Poisson distribution with $\theta_{2}=3.05$. To observe the behavior of the median waiting time for $n=10$ customers under different sample sizes and uncertainty sets, we set $\bar{\epsilon}=50\%$. To examine the performance of different methods, we repeat the estimation process 100 times. Figure 7 shows relevant results.

The dotted curves in Figure 7 stand for the average bound values got with three different methods over the 100 runs with respect to different sample sizes, and the bars in Figure 7 denote the errors between the 10\% and 90\% quantiles. Moreover, sample statistics for median waiting time bounds got with different methods under 10000 samples are shown in Table 5. The second and fourth columns refer to lower and upper quantiles over the simulation. The third column gives the mean value and the last column represents the standard deviation (SD) of the results over 100 runs. 
\begin{table}[h]
	\caption{ Summary statistics of
		median	waiting time bounds for N=10000}  
	\setlength{\tabcolsep}{8mm}{
		\begin{tabular}{@{}ccccc@{}}
			\toprule
			& 10\% & mean  & 90\%  & SD   \\ \midrule
			$W^{1,I}_n$  & 5.2802 & 5.3543  & 5.4287  & 0.0689 \\
			$W^{BG}_n$ & 4.2629 & 7.3267  & 11.4554 & 3.8227  \\
			$W^{King}$  & 9.4126 & 10.1303 & 10.9123 & 0.5319 \\ \botrule
	\end{tabular}}
\end{table}

We can see from Figure 7 and Table 5 that all the bound estimations on the median waiting time improve with the increase of the sample size, as expected; the bound of $W^{BG}_n$  decreases significantly as the sample size increases, and its volatility becomes less and less; the bound of $W^{King}$ is much more stable than that of $W^{BG}_n$, and the average bound $W^{King}$ is less than that of $W^{BG}_n$ when $N<1000$ while the opposite is true for $N=10000$; our bound $W^{1,I}_n$ induced with the Bayesian credible interval is much more stable and  significantly better than both  $W^{BG}_n$  and $W^{King}$, whenever the sample size is large or small.
\begin{figure}[H]  
	\center{\includegraphics[width=12cm]  {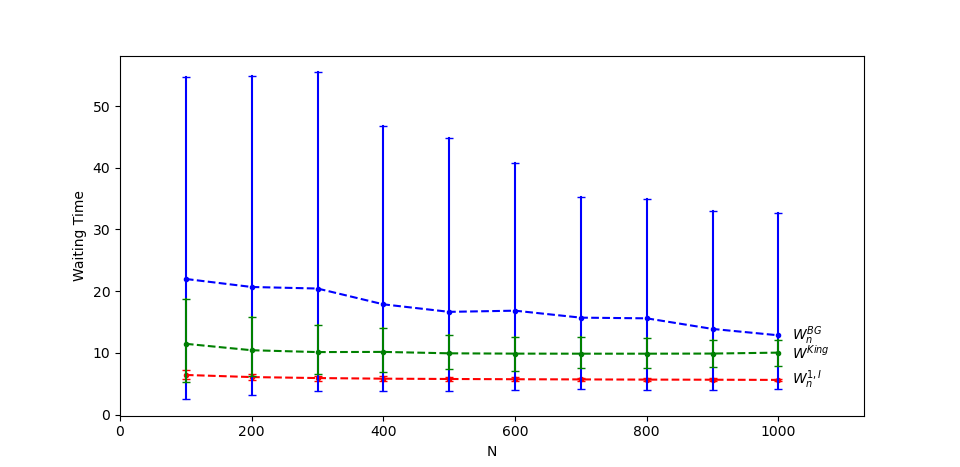}}   
	\renewcommand{\figurename}{Figure}
	\caption{\label{1} Estimations of the median waiting time with different methods and sample sizes}   
\end{figure}
\section{Conclusion}
In this paper, we introduce a new approach for approximating DRCC through constructing uncertainty sets of relevant parameters by using Bayesian credible intervals. Through incorporating the Bayesian credible interval into the uncertainty set, the proposed BDRCC approach can achieve a better trade-off between the data-fitting and robustness of the optimization problem compared to the existent DRCC approaches. Furthermore, our framework provides a unified approach for constructing uncertainty sets under different kinds of dependency among marginal distributions. We prove that, when the sample size tends to infinity, the uncertainty sets based on our BDRCC setting will converge to the true uncertainty set. This gives assurance that our approach provides an optimal approximation of the underlying DRCC problem in the presence of large data. We also show through numerical experiments that our BDRCC approach outperforms the typical methods, even with a small sample size, and can provide more practical, high-quality solutions.

An important research area at present is the multi-stage DRCC problem. Then how to reformulate this multi-stage problem within our framework as a kind of multi-stage RO problem is an interesting and practical issue worth further investigation.

\backmatter

\section*{Statements and Declarations}
\begin{itemize}
\item Funding

This research was supported by the National Key R\&D Program of China (2022YFA1004000, 2022YFA1004001) and National Natural Science Foundation of China under Grant Numbers 11991023, 11901449 and 11735011.

\item Availability of data and materials

The data that support the findings of this study are available from the corresponding author upon request.
\end{itemize}

\end{document}